\newcommand{\andf}{\quad\hbox{and}\quad}
\newcommand{\with}{\quad\hbox{with}\quad}
\def\Supp{\mathop{\rm Supp}\nolimits\ }
\newcommand{\newcom}{\newcommand}
\def\longformule#1#2{
\displaylines{ \qquad{#1} \hfill\cr \hfill {#2} \qquad\cr } }
\def\inte#1{
\displaystyle\mathop{#1\kern0pt}^\circ }
\newcommand{\w}[1]{\langle {#1} \rangle}
\newcom{\al}{\alpha}
\newcom{\de}{\delta}
\newcom{\be}{\beta}
\newcom{\s}{\sigma}
\newcom{\eps}{\epsilon}
\newcom{\ve}{\varepsilon}
\newcom{\ga}{\gamma}
\newcom{\Ga}{\Gamma}
\newcom{\ka}{\kappa}
\newcom{\Lam}{\Lambda}
\newcom{\lam}{\lambda}
\newcom{\vp}{\varphi}
\newcom{\om}{\omega}
\newcom{\Sig}{\Sigma}
\newcom{\sig}{\sigma}
\newcom{\tht}{\theta}
\newcom{\tri}{\triangle}
\newcom{\oo}{\infty}
\newcom{\h}{{\rm h}}
\newcom{\vphi}{\varphi}
\newcom{\cB}{{\mathcal B}}
\newcom{\cC}{{\mathcal C}}
\newcom{\cD}{{\mathcal D}}
\newcom{\cF}{{\mathcal F}}
\newcom{\cL}{{\mathcal L}}
\newcom{\cM}{{\mathcal M}}
\newcom{\cP}{{\mathcal P}}
\newcom{\cS}{{\mathcal S}}
\newcom{\cQ}{{\mathcal Q}}
\newcom{\cT}{{\mathcal T}}
\newcom{\cY}{{\mathcal Y}}
\newcom{\cZ}{{\mathcal Z}}
\newcom{\R}{\Bbb R}
\newcom{\T}{\Bbb T}
\newcom{\N}{\Bbb N}
\newcom{\Z}{\Bbb Z}
\newcom{\C}{\Bbb C}
\newcom{\E}{\Bbb E}
\let\wh=\widehat
\let\e=\varepsilon
\newcom{\f}{\frac}
\newcom{\dint}{\displaystyle\int}
\newcom{\dsum}{\displaystyle\sum}
\newcom{\dlim}{\displaystyle\lim}
\newcom{\ov}{\overline}
\newcom{\wt}{\widetilde}
\newcom{\pa}{\partial}
\newcom{\p}{\partial}
\newcom\na{\nabla}
\newcom{\D}{\Delta}
\def\vf{\varphi}
\newcom\rto{\rightarrow}
\newcom\lto{\leftarrow}
\newcom\mto{\mapsto}
\newcom{\disp}{\displaystyle}
\newcom{\non}{\nonumber}
\newcom{\no}{\noindent}
\newcom{\QED}{$\square$}
\def\eqdefa{\buildrel\hbox{\footnotesize def}\over =}
\newcommand{\beq}{\begin{equation}}
\newcommand{\eeq}{\end{equation}}
\newcommand{\ben}{\begin{eqnarray}}
\newcommand{\een}{\end{eqnarray}}
\newcommand{\beno}{\begin{eqnarray*}}
\newcommand{\eeno}{\end{eqnarray*}}
\newtheorem{Def}{Definition}[section]
\newtheorem{thm}{Theorem}[section]
\newtheorem{lem}{Lemma}[section]
\newtheorem{rmk}{Remark}[section]
\newtheorem{col}{Corollary}[section]
\newtheorem{prop}{Proposition}[section]
\renewcommand{\theequation}{\thesection.\arabic{equation}}
\begin{document}
\title[global well-posedness of Prandtl system]
{Global existence and decay of solutions to Prandtl system with small analytic data}

\author{Marius Paicu}
\address{Universit\'e  Bordeaux \\
 Institut de Math\'ematiques de Bordeaux\\
F-33405 Talence Cedex, France} \email{marius.paicu@math.u-bordeaux.fr}

\author{Ping Zhang}%
\address{Academy of
Mathematics $\&$ Systems Science and  Hua Loo-Keng Key Laboratory of
Mathematics, The Chinese Academy of Sciences\\
Beijing 100190, CHINA } \email{zp@amss.ac.cn}

\date{\today}

\begin{abstract} In this paper, we prove the
 global existence and the large time decay estimate  of solutions to  Prandtl system with small initial data, which is analytical  in the tangential variable.
 The key ingredient used in the proof  is to derive sufficiently fast decay-in-time estimate of some weighted analytic energy estimate
 to a quantity, which consists of a linear combination of the tangential velocity  with its primitive one, and
 which basically controls the evolution of the analytical radius to the solutions.
  Our result can be viewed  as a global-in-time Cauchy-Kowalevsakya result for  Prandtl system with small analytical data.
\end{abstract}

\maketitle

\noindent{\sl Keywords:} Prandtl system, Littlewood-Paley theory,
analytic energy estimate\vspace{0.1cm}

\noindent{\sl AMS Subject Classification (2000):} 35Q30, 76D05

\renewcommand{\theequation}{\thesection.\arabic{equation}}
\setcounter{equation}{0}
\section{Introduction}\label{sect1}

Describing the behavior of boundary layers is one of the most  challenging and  important problem in the mathematical  fluid mechanics. The governing equations of the boundary layer obtained by vanishing viscosity   of Navier-Stokes system with Dirichlet boundary condition, was proposed by Prandtl \cite{Pra} in 1904 in order to explain the disparity between the boundary conditions verified by  ideal fluid and viscous fluid with small viscosity.
Heuristically, these
 boundary layers are of amplitude $O(1)$ and of  thickness  $O(\sqrt{\nu})$ where there is a transition from the interior flow governed by
 Euler equation to the Navier-Stokes flow with a vanishing viscosity $\nu> 0$.  One may check \cite{E, Olei} and
references therein for more introductions on boundary layer theory. Especially we refer to \cite{Guo} for a comprehensive  recent survey.
\smallskip

 One of the key step to rigorously justify this inviscid limit of
Navier-Stokes system with Dirichelt boundary condition is to deal
with the well-posedness of the following Prandtl system,
\begin{equation}\label{S1eq1}
\left\{
\begin{array}{ll}
\p_tU+U\pa_x U+V\p_yU-\p_{y}^2U+\pa_xp=0, \quad (t,x,y)\in\R_+\times\R\times\R_+, \\
\pa_xU+\p_yV =0,\\
U|_{y=0}=V|_{y=0}=0 \andf \lim_{y\to +\infty}U(t,x,y)=w(t,x),\\
U|_{t=0}=U_{0},
\end{array}
\right.
\end{equation}
where  $U$ and $V$  represent
the tangential and normal velocities of the boundary layer flow.  $(w(t,x),p(t,x))$ are
the traces of the tangential velocity and pressure of the outflow on the boundary, which satisfy
Bernoulli's law:
\beq \label{S1eq1a}
\p_tw+w\p_xw+\p_xp=0.
\eeq

 Since there is no
horizontal diffusion in the $U$ equation of \eqref{S1eq1}, the
nonlinear term $V\p_y U$ (which almost behaves like $-\p_xU\p_y U$)
loses one horizontal derivative in the process of energy estimate,
and therefore the question of whether or not the Prandtl system with
general data is well-posed in Sobolev spaces is still open. In fact, E and Enquist \cite{EE} constructed
a class of initial data which generate solutions with finite time singularities in case the solutions exist locally in time.
G\'{e}rard-Varet and Dormy \cite{Ger1} proved the
ill-posedness in Sobloev spaces for the linearized Prandtl system
around non-monotonic shear flows. The nonlinear ill-posedness was
also established in \cite{Ger2, Guo} in the sense of non-Lipschtiz
continuity of the flow. Nevertheless, we have the following positive
results for two classes of special data.

$\bullet$ Under a monotonic assumption on the tangential velocity of
the outflow, Oleinik \cite{Olei} first introduced Crocco transformation and then proved the local
existence and uniqueness of classical solutions to \eqref{S1eq1}. With
the additional ``favorable" condition on the pressure, Xin and Zhang
\cite{Xin} obtained the global existence of weak solutions to this
system. Recently, by ingenious use of the cancelation  property of  the bad terms containing the tangential derivative,
  the authors of \cite{Alex} and  \cite{MW} succeeded in  proving the existence of local smooth solution  to \eqref{S1eq1} in Sobolev space
  via performing energy estimates in weighted Sobolev spaces.

$\bullet$ For the data which is analytic in both $x$ and $y$ variables,
Sammartino and Caflisch \cite{Caf} established the local
well-posedness result of \eqref{S1eq1}. The analyticity in $y$
variable was removed by Lombardo, Cannone and Sammartino in
\cite{Can}. The main argument used in  \cite{Can, Caf} is to apply
the abstract Cauchy-Kowalewskaya (CK) theorem. Lately, G\'ervard-Varet and Masmoudi \cite{GM} proved the
 well-posedness of \eqref{S1eq1} for a class of data with
Gevrey regularity. This result was improved to be optimal in sense of \cite{Ger1} in \cite{DG08} by Dietert and G\'ervard-Varet.
 The question of the long time existence for Prandtl system with small analytic data was first addressed in \cite{ZHZ} and an almost global existence result was provided in \cite{IV16}. \smallskip

In this paper, we investigate the global existence and the large time decay estimates of the solutions to  Prandtl  system with small data which is analytic in the tangential variable.
For simplicity, here we take $w(t,x)$ in \eqref{S1eq1} to be $\e f(t)$ with $f(0)=0,$ which along with \eqref{S1eq1a} implies $\p_xp=-\e f'(t).$ Let us take a cut-off function
$\chi\in C^\infty[0,\infty)$ with $\chi(y)=\left\{
\begin{array}{ll}
1\quad \mbox{if} \  y\geq 2 \\
0\quad  \mbox{if} \  y\leq 1,
\end{array}
\right.$
 we denote $W\eqdefa U-\e f(t)\chi(y).$
 Then $W$ solves
\beq \label{S1eq3}
\left\{
\begin{array}{ll}
\p_tW+\left(W+\e f(t) \chi(y)\right)\pa_x W+V\p_y\left(W+\e   f(t) \chi(y)\right)-\p_{y}^2W=\e m,\\
\pa_xW+\p_yV =0, \quad (t,x,y)\in\R_+\times\R^2_+,\\
W|_{y=0}=V|_{y=0}=0 \andf \lim_{y\to +\infty}W(t,x,y)=0,\\
W|_{t=0}=U_{0},
\end{array}
\right.
\end{equation}
where $\R^2_+\eqdefa\R\times\R_+$ and $m(t,y)\eqdefa (1-\chi(y))f'(t)+ f(t)\chi''(y).$

In order to get rid of the source term in the $W$ equation of \eqref{S1eq3}, we introduce
 $u^s$ via
\begin{equation}\label{S1eq4}
\left\{
\begin{array}{ll}
\p_tu^s-\p_y^2u^s=\e m(t,y), \quad (t,y)\in\R_+\times\R_+, \\
u^s|_{y=0}=0 \andf \lim_{y\to +\infty} u^s(t,y)=0,\\
u^s|_{t=0}=0.
\end{array}
\right.
\end{equation}
With $u^s$ being determined by \eqref{S1eq4}, we set $u\eqdefa W- u^s$ and $v\eqdefa V.$ Then $(u,v)$ verifies
\beq \label{S1eq5}
\left\{
\begin{array}{ll}
\p_tu+\left(u+u^s+\e f(t) \chi(y) \right)\pa_x u+v\p_y\left(u+u^s+\e f(t)\chi(y)\right)-\p_{y}^2u=0, \\
\pa_xu+\p_yv =0, \quad (t,x,y)\in\R_+\times\R^2_+,\\
u|_{y=0}=v|_{y=0}=0 \andf \lim_{y\to \infty}u(t,x,y)=0,\\
u|_{t=0}=u_0\eqdefa U_{0}.
\end{array}
\right.
\end{equation}

On the other hand,
due to $\p_xu+\p_yv=0,$ there exists a potential function $\vf$ so that $u=\p_y\vf$ and
$v=-\p_x\vf.$ Then by integrating the $u$ equation of \eqref{S1eq5} with respect to
$y$ variable over $[y,\infty),$ we obtain
\beno
\begin{split}
&\p_t\vf+\left(u+u^s+\e f(t)\chi(y)\right)\p_x\vf\\
&\qquad+2\int_y^\infty\left(\p_y \left(u+u^s+\e f(t)\chi(y')\right)\p_x\vf\right)\,dy'-\p_y^2\vf=Q(t,x),
\end{split}
\eeno for some function $Q(t,x).$ Yet since we assume that $\vf$ decays to zero sufficiently fast as $y$ approaching to $+\infty,$
we find that $Q(t,x)=0.$  Therefore, by virtue of \eqref{S1eq5}, $\vf$ satisfies
\beq \label{S1eq2}
\left\{
\begin{array}{ll}
\p_t\vf+\left(u+u^s+\e f(t)\chi(y)\right)\p_x\vf\\
\qquad+2\int_y^\infty\left(\p_y \left(u+u^s+\e f(t)\chi(y')\right)\p_x\vf\right)\,dy'-\p_y^2\vf=0,\\
\p_y\vf|_{y=0}=0 \andf \lim_{y\to +\infty}\vf(t,x,y)=0,\\
\vf|_{t=0}=\vf_0.
\end{array}
\right.
\eeq

In order to globally control the evolution of the analytic band to the solutions of \eqref{S1eq5}, we introduce
the following key quantity:
\beq \label{S7eq4}
G\eqdefa u+\f{y}{2\w{t}}\vf \andf g\eqdefa \p_yG= \p_yu+\f{y}{2\w{t}}u+\f\vf{2\w{t}}.
\eeq
 We emphasize  that  the introduction of those quantities $G$ and $g$  in \eqref{S7eq4} is in fact inspired by the function $\frak{g}\eqdefa \p_yu+\f{y}{2\w{t}}u,$
which was introduced by Ignatova and Vicol in \cite{IV16}, where the authors of \cite{IV16} basically
proved that the  weighted analytical norm of $\frak{g}(t)$ decays like $\w{t}^{-\left(\f54\right)_-},$ which decays
faster than the  weighted analytical norm of $u$ itself. We observe that  $g-\frak{g}=\f\vf{2\w{t}}.$ One novelty of this
paper is to prove that the analytical norm of $g$ is almost decays like $\w{t}^{-\f74}.$

At the beginning of Section \ref{Sect7}, we shall show that $G$ verifies
\beq \label{S7eq6}
\left\{
\begin{array}{ll}
\p_tG-\p_y^2G+\w{t}^{-1}G+\left(u+u^s+\e f(t)\chi(y)\right)\p_xG+v\p_yG\\
\qquad+v\p_y\left(u^s+\e f(t)\chi(y)\right)-\f12\w{t}^{-1}v\p_y({y\vf})\\
\qquad+\f{y}{\w{t}}\int_y^\infty\left(\p_y \left(u+u^s+\e f(t)\chi(y')\right)\p_x\vf\right)\,dy'=0,\\
G|_{y=0}=0 \andf \lim_{y\to +\infty}G(t,x,y)=0,\\
G|_{t=0}=G_0\eqdefa u_0+\f{y}2\vf_0.
\end{array}
\right.
\eeq

The main result of this paper states as follows:

\begin{thm}\label{th1.1}
{\sl Let  $\delta>0$  and $f\in H^1(\R_+)$ which satisfies
\beq \label{S2eq12}
\cC_{f}\eqdefa \int_{0}^\infty\w{t}^{\f54}\bigl(|f(t)|+|f'(t)|\bigr)\,dt + \Bigl(\int_{0}^\infty\w{t}^{\f72}\bigl(f^2(t)+(f'(t))^2\bigr)\,dt\Bigr)^{\f12}<\infty.
\eeq
 Let  $u_0=\p_y\vf_0$ satisfy $u_0(x,0)=0,$  $\int_0^\infty u_0\,dy=0$ and $\bigl\|e^{\f{y^2}{8}}e^{\delta|D_x|}(\varphi_0,u_0)\bigr\|_{\cB^{\f12,0}} <\infty.$ We assume moreover that
  $G_0= u_0+\f{y}2\vf_0$ satisfies
\beq\label{S1eq6}
\bigl\|e^{\f{y^2}{8}}e^{\delta|D_x|} G_0\bigr\|_{\cB^{\f12,0}}\leq c_0
\eeq
 for some $c_0$ sufficiently small. Then \eqref{S1eq4} has a solution $u^s$ and
  there exists $\e_0>0$ so that  for $\varepsilon\leq \e_0$, the system (\ref{S1eq5}) has a  unique global solution $u$ which satisfies
\beq\label{S1eq7}
\bigl\| e^{\f{y^2}{8\w{t}}} e^{\frac{\delta}{2}|D_x|}u \bigr\|_{{L}^\infty(\R_+;\cB^{\f12,0})}+\bigl\|e^{\f{y^2}{8\w{t}}} e^{\frac{\delta}{2}|D_x|}\p_y u\bigr\|_{{L}^2(\R_+;\cB^{\f12,0})}\leq C\bigl\|e^{\f{y^2}{8}}e^{\delta |D_x|}u_0\bigr\|_{\cB^{\f12,0}}.
\eeq
Furthermore,  for any $t>0,$   there hold
\beq \label{S1eq10}
\begin{split}
&\bigl\|\w{t}^{\f34} e^{\f{y^2}{8\w{t}}} e^{\frac{\delta}{2}|D_x|}u(t) \bigr\|_{\cB^{\f12,0}}
+\bigl\|\w{t'}^{\f34} e^{\f{y^2}{8\w{t}}} e^{\frac{\delta}{2}|D_x|}\p_yu \bigr\|_{L^2(t/2,t;\cB^{\f12,0})}\\
&\qquad\qquad\qquad\qquad\qquad\qquad\qquad\qquad\qquad\qquad\leq C\|e^{\f{y^2}{8}}e^{\delta|D_x|}(\varphi_0,u_0)\|_{\cB^{\f12,0}},\\
&\bigl\|\w{t}^{\f54} e^{\f{y^2}{8\w{t}}} e^{\frac{\delta}{2}|D_x|}G(t) \bigr\|_{\cB^{\f12,0}}
+\bigl\|\w{t'}^{\f54} e^{\f{y^2}{8\w{t}}} e^{\frac{\delta}{2}|D_x|}\p_yG \bigr\|_{L^2(t/2,t;\cB^{\f12,0})}
\leq C\|e^{\f{y^2}{8}}e^{\delta|D_x|}G_0\|_{\cB^{\f12,0}},
\end{split}
\eeq
and
\beq \label{S1eq10bn}
\begin{split}
\bigl\|\w{t}^{\f54} e^{\f{\ga y^2}{8\w{t}}} e^{\frac{\delta}{2}|D_x|}u(t) \bigr\|_{\cB^{\f12,0}}
+\bigl\|\w{t'}^{\f54} e^{\f{\ga y^2}{8\w{t}}} e^{\frac{\delta}{2}|D_x|}\p_yu \bigr\|_{L^2(t/2,t;\cB^{\f12,0})}
\leq C\|e^{\f{y^2}{8}}e^{\delta|D_x|}G_0\|_{\cB^{\f12,0}},
\end{split}
\eeq  for any $\ga\in (0,1).$
}
\end{thm}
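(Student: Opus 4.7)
The plan is to combine a tangentially-analytic Chemin--Lerner type functional framework with a Gaussian weight in the normal variable, and to run weighted energy estimates on three coupled unknowns: the potential $\varphi$, the tangential velocity $u$, and the new quantity $G = u + \frac{y}{2\w{t}}\varphi$. I would introduce a non-decreasing gauge $\theta(t)$ with $\theta(0)=0$, work with the tangential Fourier multiplier $e^{(\delta-\lambda\theta(t))|D_x|}$ for $\lambda$ large, apply the horizontal dyadic block $\Delta_k^h$, and multiply each equation by the Gaussian weight $e^{y^2/(8\w{t})}$. A key observation is that when $(\p_t - \p_y^2)$ acts on such a weighted dyadic piece, the cross terms produce a coercive contribution of the form $\frac{1}{4\w{t}}\|y\cdot(\text{weighted piece})\|_{L^2}^2$ that will be used to absorb the worst transport terms. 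The linear corrector $u^s$ solving \eqref{S1eq4} is handled explicitly by the heat kernel, and \eqref{S2eq12} is exactly what is required so that every contribution depending on $u^s$ or on $\varepsilon f\chi$ remains sub-principal.

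First I would close the basic estimate \eqref{S1eq7} on $u$, together with a parallel estimate on $\varphi$, by a standard bootstrap on the analytic radius: $\dot\theta(t)$ is lost at a rate proportional to $\||D_x|^{1/2}(\text{weighted }u)\|_{\cB^{1/2,0}}$, and the nonlinear terms in \eqref{S1eq5} and \eqref{S1eq2} are handled by Bony's paradecomposition in $x$, by integration by parts in $y$ against the parabolic dissipation $\p_y u$, and, for the bad transport term $v\p_y u$ in which $v=-\p_x\varphi$ costs one tangential derivative, by integrating the $\p_x$ onto an already-controlled analytic factor. This yields global existence without time decay.

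The main novelty is the decay half of \eqref{S1eq10} for $G$. Equation \eqref{S7eq6} carries the explicit damping $\w{t}^{-1}G$; combined with the $\frac{1}{4\w{t}}\|y\cdot\|_{L^2}^2$ contribution from the Gaussian weight this yields a coercive lower bound of order $\w{t}^{-1}$ on the weighted energy. Running the weighted analytic energy estimate and then multiplying by $\w{t}^{5/2}$ therefore still leaves a positive $\w{t}^{3/2}$-type damping on the left, which is exactly what is needed to close the $\w{t}^{-5/4}$ rate for $G$. The source terms $v\p_y(u^s+\varepsilon f\chi)$, the coupling $\frac{1}{2\w{t}}v\p_y(y\varphi)$ and the nonlocal tail $\frac{y}{\w{t}}\int_y^\infty \p_y(\cdots)\p_x\varphi\,dy'$ each supply either a factor of $\varepsilon\cC_f$ or a factor of $\varphi$ (or of $g=\p_y G$) whose time decay makes them integrable against the $\w{t}^{5/2}$ weight; the factor $g$ in particular is controlled in a parallel estimate that sharpens the Ignatova--Vicol rate $\w{t}^{-5/4}$ for $\mathfrak{g}$ to essentially $\w{t}^{-7/4}$ for $g$. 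The $\w{t}^{-3/4}$ decay for $u$ in \eqref{S1eq10} then follows from a time-weighted version of the argument used for \eqref{S1eq7}.

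For \eqref{S1eq10bn} I would use the pointwise identity $u = G - \frac{y}{2\w{t}}\varphi$: the full Gaussian weight $e^{y^2/(8\w{t})}$ applied to $\frac{y}{2\w{t}}\varphi$ only produces $\w{t}^{-3/4}$ decay because of the explicit $y/\w{t}$ factor, but relaxing the weight to $e^{\ga y^2/(8\w{t})}$ with any $\ga<1$ leaves an extra $e^{-(1-\ga)y^2/(8\w{t})}\cdot\frac{y}{\w{t}}$ factor which is uniformly $O(\w{t}^{-1/2})$ in $y$, and this recovers the $\w{t}^{-5/4}$ rate directly from the $G$ bound. The hardest step throughout will be the $G$ estimate itself: the nonlocal term $\frac{y}{\w{t}}\int_y^\infty \p_y(u+u^s+\varepsilon f\chi)\p_x\varphi\,dy'$ must be controlled simultaneously against the analytic radius $\theta$, the explicit $y$ multiplier, and the time weight $\w{t}^{5/2}$, and it is precisely here that the improved $\w{t}^{-7/4}$-decay of $g$ is indispensable.
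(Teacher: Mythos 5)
Your overall architecture (coupled weighted analytic energy estimates for $\varphi$, $u$ and $G=u+\frac{y}{2\w{t}}\varphi$, a gauge $\theta$ multiplying $|D_x|$, Bony decomposition in $x$, and the relaxation $e^{\ga\Psi}$, $\ga<1$, to transfer the $\w{t}^{-5/4}$ decay from $G$ to $u$ in \eqref{S1eq10bn}) is the same as the paper's. But there is a genuine gap in the logical order and in the choice of $\dot\theta$. You propose to first close \eqref{S1eq7} and obtain ``global existence without time decay'' by taking $\dot\theta$ proportional to $\||D_x|^{1/2}u_\Phi\|_{\cB^{1/2,0}}$. This cannot work: global existence requires $\sup_t\theta(t)<\delta/\lambda$, i.e.\ $\int_0^\infty\dot\theta\,dt$ small, and the best decay available for $u$ (even a posteriori) is $\w{t}^{-3/4}$ for $\|e^\Psi u_\Phi\|_{\cB^{1/2,0}}$ and, in the time--averaged sense, $\w{t}^{-5/4}$ for $\p_yu$; with the $\w{t}^{1/4}$ loss that the product estimates force into $\dot\theta$ (see \eqref{S4eq3} and \eqref{1.9}), this gives at best a logarithmically divergent integral. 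This is exactly why \cite{ZHZ,IV16} only reach almost-global existence. The paper's resolution is to measure the radius loss from the start by $\dot\theta(t)\sim\w{t}^{1/4}\|e^\Psi\p_yG_\Phi(t)\|_{\cB^{1/2,0}}$ (plus the $G^s$ and $f$ contributions) and to prove, in Proposition \ref{S7prop2}, the integrated bound $\int_0^\infty\w{t'}^{1/4}\|e^\Psi\p_yG_\Phi\|_{\cB^{1/2,0}}\,dt'\lesssim\|e^{y^2/8}e^{\delta|D_x|}G_0\|_{\cB^{1/2,0}}$, which rests on the $\w{t}^{-7/4}$-type decay of $\p_yG$. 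So the decay of $G$ is not a refinement added after global existence; it is the mechanism by which $T^\ast=\infty$ is obtained, and the estimates for $\varphi$, $u$ and $G$ must all be run with this single choice of $\dot\theta$, using Lemma \ref{S0lem1} to convert $u$, $\p_yu$ and $\p_y(y\varphi)$ into $\p_yG$ inside every paraproduct.

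A second, smaller but still essential, point concerns the corrector $u^s$. You propose to handle it ``explicitly by the heat kernel''; but the direct energy estimate on \eqref{S1eq4} only gives $\|e^\Psi\p_yu^s(t)\|_{L^2_{\rm v}}\lesssim\w{t}^{-3/4}$ (see \eqref{Soeq1}), which again makes $\int_0^\infty\w{t}^{1/4}\|e^\Psi\p_yu^s\|_{L^2_{\rm v}}\,dt$ divergent and destroys the radius control. The paper must instead construct $u^s$ through its primitive $\psi^s$ and the auxiliary quantity $G^s=u^s+\frac{y}{2\w{t}}\psi^s$ solving the damped equation \eqref{S2eq7}, for which the extra $\w{t}^{-1}G^s$ term yields $\w{t}^{-5/4}$ decay and Proposition \ref{prop2.1}; the hypothesis \eqref{S2eq12} is calibrated to this construction, not to the naive heat-kernel bound. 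Without these two ingredients your bootstrap does not close globally in time.
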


The anisotropic Besov spaces $\cB^{\f12,0}$ will be recalled in Section
\ref{sect2}. Here and all in that follows, we always denote $\w{t}\eqdefa 1+t.$

\begin{rmk}
\begin{itemize}

\item[(1)]
 In  the previous results concerning the long time well-posedness of the Prandtl system in \cite{IV16, ZHZ},  only a lower bound of the lifespan to the solution was obtained. We also mention  that similar type of result as in \cite{IV16, ZHZ}  for the lifespan of MHD boundary layer equation was obtained in \cite{XY19}.

\item[(2)] Our global well-posedness result does not contradict with the blow-up result in \cite{EE}. In fact,
 Theorem 1.1 of \cite{EE} claims that if $u_0(0,y)=0$ and $a_0(y)=-\p_xu_0(0,y)$   is nonnegative
and of compact support such that
\beq \label{S1eq8}
E(a_0)<0 \with E(a)\eqdefa \int_0^\infty\Bigl(\f12(\p_ya(y))^2-\f14a^3(y)\Bigr)\,dy<0.
\eeq
Then any smooth solution of \eqref{S1eq1} does not exist globally in time.

For small initial data $u_0(x,y)=\eta\phi(x,y),$  we have $a_0(y)=-\eta\p_x\phi(0,y)$ and
\beno
E(a_0)=\f{\eta^2}2\int_0^\infty \left(\p_x\p_y\phi(0,y)\right)^2\,dy-\f{\eta^3}4\int_0^\infty \left(\p_x\phi(0,y)\right)^3\,dy,
\eeno
which can not satisfy $E(a_0)<0$ for $\eta$ sufficiently small except that $\p_x\p_y\phi(0,y)=0.$ However, in the later  case,
due to the fact that the solution decays to zero as $y$ approaching to $+\infty,$  we have $a_0(y)=\p_x\phi(0,y)=0,$
which implies $E(a_0)=0$ so that \eqref{S1eq8} can not be satisfied in both cases.

\item[(3)] We also remark  that the exponential weight that appears in the norm of \eqref{S1eq6} excludes
 the possibility of  taking initial data of \eqref{S1eq5} which is slowly varying  in the normal variable.
Indeed we consider an initial data of the form $u_0^\e(x,y)=\eta \phi\bigl(x,\e{y}\bigr)$ with  $\eta, \e$ being sufficiently small
 such hat
\beno
E(a_0)=\f{\e\eta^2}{2}\int_0^\infty \left(\p_x\p_y\phi(0,y)\right)^2\,dy-\f{\eta^3 }{4\e}\int_0^\infty \left(\p_x\phi(0,y)\right)^3\,dy<0.
\eeno
Then it is easy to check that $u_0^\e$  defined above can not verify our smallness condition  \eqref{S1eq6}.
\end{itemize}

\end{rmk}

\begin{rmk}

\begin{itemize}

\item[(1)]
 The idea of  closing the analytic energy
estimate, \eqref{S1eq7}, for  solutions of \eqref{S1eq5}  goes back to
\cite{Ch04} where Chemin introduced a tool to  make analytical type estimates and
controlling the size of the analytic radius simultaneously. It was used in the context of
anisotropic Navier-Stokes system \cite{CGP} ( see also \cite{mz1, mz2}), which implies  the global
well-posedness of three dimensional Navier-Stokes system with a
class of ``ill prepared data", which is slowly varying in the
vertical variable, namely of the form $\e x_3,$  and the $B^{-1}_{\infty,\infty}(\R^3)$
norm of which blow up as the small parameter goes to zero.

\item[(2)]
We mention that in our previous paper with Zhang in \cite{PZZ2}, we used
the weighted analytic norm of $\p_yu$  to control the analytic band of the solutions, which seems more obvious than the
weighted
analytic norm of $g,$ which is defined by \eqref{S7eq4}.
Since in \cite{PZZ2}, we worked on Prandt type system in a strip with homogenous boundary condition
so that we can use the classical Poincar\'e inequality to derive the exponential decay estimates for the solutions. Therefore
we have a global control for the analytic band. Here
in the upper space, by using another type of Poincar\'e inequality,  \eqref{S2eq1}, which yields decay of a sort of
weighted analytic norm to $\p_yu$ like $\w{t}^{-\f54}$ as the time $t$ going to $\infty.$
 Yet this estimate  can not guarantee the quantity: $\int_0^\infty \w{t}^{\f14}\|e^\Psi\p_yu_\Phi(t)\|_{\cB^{\f12,0}}\,dt,$
 to be finite, which will be crucial to globally control the analytic band of the solutions to \eqref{S1eq5}.
\end{itemize}
\end{rmk}

\smallskip

 Let us end this introduction by the notations that we shall use
in this context.\vspace{0.2cm}

For~$a\lesssim b$, we mean that there is a uniform constant $C,$
which may be different on different lines, such that $a\leq Cb$. $(a\ |\
b)_{L^2_+}\eqdefa\int_{\R^2_+}a(x,y) {b}(x,y)\,dx\,dy$ (resp. $(a\ |\
b)_{L^2_{\rm v}}\eqdefa\int_{\R_+}a(y) {b}(y)\,dy$)  stands for
the $L^2$ inner product of $a,b$ on $\R^2_+$ (resp. $\R_+$) and
$L^p_+=L^p(\R^2_+)$  with $\R^2_+\eqdefa\R\times\R_+.$ For $X$ a Banach space
and $I$ an interval of $\R,$ we denote by $L^q(I;\,X)$ the set of
measurable functions on $I$ with values in $X,$ such that
$t\longmapsto\|f(t)\|_{X}$ belongs to $L^q(I).$  In particular,  we denote by
$L^p_T(L^q_{\rm h}(L^r_{\rm v}))$ the space $L^p([0,T];
L^q(\R_{x_{}};L^r(\R_{y}^+))).$  Finally,
 $
\left(d_{k}\right)_{k\in\Z}$  designates  a nonnegative generic element in the sphere
of $\ell^1(\Z)$
 so that  $\sum_{k\in\Z}d_k=1.$

\smallskip

\renewcommand{\theequation}{\thesection.\arabic{equation}}
\setcounter{equation}{0}

\section{Littlewood-Paley theory and functional framework}\label{sect2}

In the rest of this paper, we shall frequently use Littlewood-Paley decomposition in the horizontal variable, $x.$
For the convenience of the readers,  we shall collect some basic facts on anisotropic  Littlewood-Paley theory in this section.
Let us first recall from
\cite{BCD} that \beq
\begin{split}
&\Delta_k^{\rm h}a=\cF^{-1}(\varphi(2^{-k}|\xi|)\widehat{a}),\qquad
S^{\rm h}_ka=\cF^{-1}(\chi(2^{-k}|\xi|)\widehat{a}),
\end{split} \label{1.3a}\eeq where and in all that follows, $\cF
a$ and $\widehat{a}$ always denote the partial  Fourier transform of
the distribution $a$ with respect to $x$ variable,  that is, $
\widehat{a}(\xi,y)=\cF_{x\to\xi}(a)(\xi,y),$
  and $\chi(\tau),$ ~$\varphi(\tau)$ are
smooth functions such that
 \beno
&&\Supp \varphi \subset \Bigl\{\tau \in \R\,/\  \ \frac34 \leq
|\tau| \leq \frac83 \Bigr\}\andf \  \ \forall
 \tau>0\,,\ \sum_{k\in\Z}\varphi(2^{-k}\tau)=1,\\
&&\Supp \chi \subset \Bigl\{\tau \in \R\,/\  \ \ |\tau|  \leq
\frac43 \Bigr\}\quad \ \ \ \andf \  \ \, \chi(\tau)+ \sum_{k\geq
0}\varphi(2^{-k}\tau)=1.
 \eeno


\begin{Def}\label{def1.2}
{\sl  Let~$s$ in~$\R$. For~$u$ in~${\mathcal S}_h'(\R^2_+),$ which
means that $u$ is in~$\cS'(\R^2_+)$ and
satisfies~$\lim_{k\to-\infty}\|S_k^{\rm h}u\|_{L^\infty}=0,$ we set
$$
\|u\|_{\cB^{s,0}}\eqdefa\big\|\big(2^{ks}\|\Delta_k^{\rm h}
u\|_{L^{2}_+}\big)_{k\in\Z}\bigr\|_{\ell ^{1}(\Z)}.
$$
\begin{itemize}

\item
For $s\leq \frac{1}{2}$, we define $ \cB^{s,0}(\R^2_+)\eqdefa
\big\{u\in{\mathcal S}_h'(\R^2_+)\;\big|\; \|
u\|_{\cB^{s,0}}<\infty\big\}.$

\item
If $\ell$ is  a positive integer and if~$\ell-\frac{1}{2}< s\leq
\ell+\frac{1}{2}$, then we define~$ \cB^{s,0}(\R^2_+)$  as the subset
of distributions $u$ in~${\mathcal S}_h'(\R^2_+)$ such that
$\p_x^\ell u$ belongs to~$ \cB^{s-\ell,0}(\R^2_+).$
\end{itemize}
}
\end{Def}

In  order to obtain a better description of the regularizing effect
of the transport-diffusion equation, we need to use Chemin-Lerner
type spaces $\widetilde{L}^{p}_T(\cB^{s,0}(\R^2_+))$.
\begin{Def}\label{def2.2}
{\sl Let $p\in[1,\,+\infty]$ and $T_0,T\in[0,\,+\infty]$. We define
$\widetilde{L}^{p}(T_0,T; \cB^{s,0}(\R^2_+))$ as the completion of
$C([T_0,T]; \,\cS(\R^2_+))$ by the norm
$$
\|a\|_{\widetilde{L}^{p}(T_0,T; \cB^{s,0})} \eqdefa \sum_{k\in\Z}2^{ks}
\Big(\int_{T_0}^T\|\Delta_k^{\rm h}\,a(t) \|_{L^2_+}^{p}\,
dt\Big)^{\frac{1}{p}}
$$
with the usual change if $p=\infty.$ In particular, when $T_0=0,$ we shall denote $\|a\|_{\widetilde{L}_T^{p}(\cB^{s,0})}
\eqdefa \|a\|_{\widetilde{L}^{p}(0,T; \cB^{s,0})}$ for simplicity.}
\end{Def}

In order to overcome the difficulty that one can not use Gronwall's
type argument in the framework of Chemin-Lerner space
$\wt{L}^2_T(\cB^{s,0}),$ we also need to use the time-weighted
Chemin-Lerner type norm, which was introduced by the authors in
\cite{PZ1}.

\begin{Def}\label{def1.1} {\sl Let $f(t)\in L^1_{\mbox{loc}}(\R_+)$
be a nonnegative function and $t_0, t\in [0,\infty].$ We define \beq \label{1.4}
\|a\|_{\wt{L}^p_{t_0,t;f}(\cB^{s,0})}\eqdefa
\sum_{k\in\Z}2^{ks}\Bigl(\int_{t_0}^t f(t')\|\D_k^{\rm
h}a(t')\|_{L^2_+}^p\,dt'\Bigr)^{\f1p}. \eeq When $t_0=0,$ we simplify the notation $\|a\|_{\wt{L}^p_{0,t:f}(\cB^{s,0})}$ as $\|a\|_{\wt{L}^p_{t,f}(\cB^{s,0})}.$}
\end{Def}

 \medbreak
We also recall the following anisotropic
Bernstein  lemma from \cite{CZ1, Pa02}:

\begin{lem} \label{lem:Bern}
 {\sl Let $\cB_{\rm h}$ be a ball
of~$\R_{\rm h}$, and~$\cC_{\rm h}$  a ring of~$\R_{\rm
h}$; let~$1\leq p_2\leq p_1\leq \infty$ and ~$1\leq q\leq \infty.$
Then there holds:

\smallbreak\noindent If the support of~$\wh a$ is included
in~$2^k\cB_{\rm h}$, then
\[
\|\partial_{x}^\ell a\|_{L^{p_1}_{\rm h}(L^{q}_{\rm v})} \lesssim
2^{k\left(\ell+\f 1 {p_2}-\f 1 {p_1}\right)}
\|a\|_{L^{p_2}_{\rm h}(L^{q}_{\rm v})}.
\]

\smallbreak\noindent If the support of~$\wh a$ is included
in~$2^k\cC_{\rm h}$, then
\[
\|a\|_{L^{p_1}_{\rm h}(L^{q}_{\rm v})} \lesssim
2^{-k\ell} \|\partial_{x}^\ell a\|_{L^{p_1}_{\rm
h}(L^{q}_{\rm v})}.
\]
}
\end{lem}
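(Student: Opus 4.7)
The plan is to reduce both inequalities to one-dimensional convolution estimates in the horizontal variable, since the Fourier localization acts only in $x$ while the normal variable $y$ plays the role of a parameter. I would fix $\phi\in C_c^\infty(\R)$ with $\phi\equiv 1$ on $\cB_{\rm h}$ and compactly supported in a slightly larger ball, so that for any $a$ with $\widehat{a}(\cdot,y)$ supported in $2^k\cB_{\rm h}$ one has $\widehat{a}(\xi,y)=\phi(2^{-k}\xi)\widehat{a}(\xi,y)$. Then
$$\partial_x^\ell a(x,y) = \bigl(h_k^\ell *_{x} a(\cdot,y)\bigr)(x), \qquad h_k^\ell(x) := \cF^{-1}\bigl[(i\xi)^\ell \phi(2^{-k}\xi)\bigr](x),$$
and a rescaling gives $h_k^\ell(x)=2^{k(\ell+1)}h^\ell(2^k x)$ with $h^\ell:=\cF^{-1}[(i\xi)^\ell\phi]\in \cS(\R)$.

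For the ball case I would first take the $L^q_{\rm v}$-norm pointwise in $x$, and push it through the convolution in $x$ via Minkowski's integral inequality, obtaining
$$\|\partial_x^\ell a(x,\cdot)\|_{L^q_{\rm v}} \leq \bigl(|h_k^\ell| *_x \|a(\cdot,\cdot)\|_{L^q_{\rm v}}\bigr)(x).$$
Young's convolution inequality in the horizontal variable with exponents $1+1/p_1 = 1/r + 1/p_2$ (which is admissible because $p_2 \le p_1$) then yields
$$\|\partial_x^\ell a\|_{L^{p_1}_{\rm h}(L^q_{\rm v})} \leq \|h_k^\ell\|_{L^r_{\rm h}}\,\|a\|_{L^{p_2}_{\rm h}(L^q_{\rm v})}.$$
A scaling computation gives $\|h_k^\ell\|_{L^r_{\rm h}} = 2^{k(\ell+1-1/r)}\|h^\ell\|_{L^r_{\rm h}}$, and the elementary identity $\ell+1-1/r = \ell+1/p_2-1/p_1$ produces the stated factor.

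For the annulus case I would choose $\tilde\phi\in C_c^\infty(\R\setminus\{0\})$ with $\tilde\phi\equiv 1$ on $\cC_{\rm h}$, and set $\psi(\xi):=\tilde\phi(\xi)/(i\xi)^\ell$, which is smooth and compactly supported away from the origin precisely because $\tilde\phi$ vanishes near zero. From the identity $(i\xi)^\ell\psi(2^{-k}\xi)=2^{k\ell}\tilde\phi(2^{-k}\xi)$, one obtains an explicit convolution formula inverting $\partial_x^\ell$ at frequency $2^k$:
$$a(x,y) = 2^{-k\ell}\bigl(\tilde h_k *_x \partial_x^\ell a(\cdot,y)\bigr)(x),\qquad \tilde h_k(x)=2^k \tilde h(2^k x),\ \tilde h:=\cF^{-1}[\psi].$$
Running the same Minkowski-then-Young route with $p_2=p_1$ (so $r=1$), and using the scale-invariant bound $\|\tilde h_k\|_{L^1_{\rm h}}=\|\tilde h\|_{L^1_{\rm h}}$, produces the announced $2^{-k\ell}$ prefactor.

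There is no serious obstacle in this proof; the one point that requires care is the order of operations. Minkowski's integral inequality must be invoked \emph{before} Young's, so that the outer $L^{p_1}_{\rm h}$ norm ultimately sits as the norm in the convolution variable and the $L^q_{\rm v}$ norm is trapped inside. The Schwartz regularity of $h^\ell$ and $\tilde h$ makes $\|h^\ell\|_{L^r_{\rm h}}$ and $\|\tilde h\|_{L^1_{\rm h}}$ finite constants that depend only on the cutoffs and on $\ell$, and in particular are independent of $k$, as is needed for the frequency-uniform implicit constant in the conclusion.
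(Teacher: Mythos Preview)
Your argument is correct and is precisely the classical proof of the anisotropic Bernstein inequalities. The paper does not actually prove this lemma; it merely recalls it from \cite{CZ1, Pa02}, so there is no in-paper proof to compare against, but your route via Fourier multiplier kernels, Minkowski's integral inequality in the vertical variable, and Young's convolution inequality in the horizontal variable is exactly the standard one found in those references (and in \cite{BCD}).
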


Finally to deal with the estimate concerning the product of two distributions,
we shall frequently use  Bony's decomposition (see \cite{Bo}) in
the horizontal variable:
 \ben\label{Bony} fg=T^{\rm h}_fg+T^{\rm
h}_{g}f+R^{\rm h}(f,g), \een where \beno
\begin{split}
 T^{\rm h}_fg\eqdefa\sum_kS^{\rm
h}_{k-1}f\Delta_k^{\rm h}g,\quad R^{\rm
h}(f,g)\eqdefa&\sum_k\widetilde{\Delta}_k^{\rm h}f\Delta_{k}^{\rm h}g \with
\widetilde{\Delta}_k^{\rm h}f\eqdefa
\displaystyle\sum_{|k-k'|\le 1}\Delta_{k'}^{\rm h}f. \end{split} \eeno

\renewcommand{\theequation}{\thesection.\arabic{equation}}
\setcounter{equation}{0}

\section{Sketch of the proof to Theorem \ref{th1.1}}

We point out that a key ingredient used in the proof of Theorem \ref{th1.1} is the following Poincar\'e type inequality,
which is a special case of Treves inequality that can be found in \cite{Hor83} (see also  Lemma 3.3 of \cite{IV16}).

\begin{lem}\label{lem2.1}
{\sl Let $\Psi(t,y)\eqdefa \frac{y^2}{8\w{t}}$ and $d$ be a nonnegative integer. Let $u$ be a smooth enough function on $\R^d\times\R_+$
which decays to zero sufficiently fast as $y$ approaching to $+\infty.$  Then one has
\beq \label{S2eq1}
\int_{\R^d\times\R_+} |\partial_y u(X,y)|^2 e^{2\Psi}\,dX\,dy \geq \frac1{2\w{t}}\int_{\R^d\times\R_+} |u(X,y)|^2 e^{2\Psi}\,dX\,dy.\eeq
}
\end{lem}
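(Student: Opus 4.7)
\textbf{Proof plan for Lemma \ref{lem2.1}.} The plan is to prove the inequality fiberwise in $X\in\R^d$ and then integrate, so let me fix $t>0$ and $X\in\R^d$ and work only with the $y$-integrals. The key computation is to differentiate $yu^2 e^{2\Psi}$ in $y$. Since $\partial_y\Psi=\f{y}{4\w{t}}$, one finds
\begin{equation*}
\p_y\bigl(y\, u^2 e^{2\Psi}\bigr)=u^2 e^{2\Psi}\Bigl(1+\f{y^2}{2\w{t}}\Bigr)+2y\,u\,\p_y u\,e^{2\Psi}.
\end{equation*}
Integrating over $y\in\R_+$ and noting that the boundary term vanishes at $y=0$ (thanks to the explicit factor $y$) and at $y=+\infty$ (thanks to the sufficiently fast decay of $u$, which dominates the at-most-quadratic-exponential growth of $e^{2\Psi}$), I obtain
\begin{equation*}
\int_0^\infty u^2 e^{2\Psi}\,dy+\f{1}{2\w{t}}\int_0^\infty y^2 u^2 e^{2\Psi}\,dy=-2\int_0^\infty y\,u\,\p_y u\,e^{2\Psi}\,dy.
\end{equation*}

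Next I bound the right-hand side by Cauchy--Schwarz:
\begin{equation*}
\Bigl|{-2}\int_0^\infty y\,u\,\p_y u\,e^{2\Psi}\,dy\Bigr|\leq 2\Bigl(\int_0^\infty y^2 u^2 e^{2\Psi}\,dy\Bigr)^{\f12}\Bigl(\int_0^\infty |\p_y u|^2 e^{2\Psi}\,dy\Bigr)^{\f12}.
\end{equation*}
Writing $A\eqdefa \int_0^\infty y^2u^2 e^{2\Psi}\,dy$ and $B\eqdefa \int_0^\infty |\p_y u|^2 e^{2\Psi}\,dy$, Young's inequality in the sharp form $2\sqrt{AB}\leq \f{A}{2\w{t}}+2\w{t}\,B$ yields
\begin{equation*}
\int_0^\infty u^2 e^{2\Psi}\,dy+\f{A}{2\w{t}}\leq \f{A}{2\w{t}}+2\w{t}\,B,
\end{equation*}
so the $A/(2\w{t})$ terms cancel and one recovers $\int_0^\infty u^2 e^{2\Psi}\,dy\leq 2\w{t}\,B$, which is exactly the claim at fixed $X$. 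Integrating over $X\in\R^d$ (all estimates hold pointwise in $X$) finishes the proof.

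The computation is essentially mechanical. The one subtle point I would double-check carefully is the vanishing of the boundary term $[y\,u^2 e^{2\Psi}]_0^\infty$: at $y=0$ this is trivial because of the factor $y$, so the inequality holds with no boundary condition imposed on $u$ at $y=0$; at $y=+\infty$ the condition that $u$ decays ``sufficiently fast'' must be quantitative enough to beat $y\,e^{y^2/(4\w{t})}$, which is implicit in the hypothesis. Apart from this vigilance on boundary terms, the argument is just the combination \emph{integration by parts} $+$ \emph{Cauchy--Schwarz} $+$ \emph{sharp Young}, in which the constants align perfectly to produce the announced factor $\f1{2\w{t}}$.
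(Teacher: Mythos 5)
Your proposal is correct and follows essentially the same route as the paper: the identity obtained by integrating $\p_y\bigl(y\,u^2e^{2\Psi}\bigr)$ over $\R_+$ is exactly the paper's integration by parts starting from $u^2=(\p_y y)\,u^2$, and the subsequent Cauchy--Schwarz plus Young step uses the same weights $\f{A}{2\w{t}}+2\w{t}B$, so the constants cancel identically. The only cosmetic difference is that you argue fiberwise in $X$ before integrating, whereas the paper integrates over $X$ first; your remark that no boundary condition at $y=0$ is needed matches the paper's own observation.
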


\begin{proof} We remark that compared with Lemma 3.3 of \cite{IV16}, here we do not need any boundary condition for
$u$ on the boundary $y=0.$ For completeness, we outline its proof here.  As a matter of fact, for any fixed $X\in\R^d,$
we first get, by using integration by parts, that
\beno
\begin{split}
\int_{R_+} u^2(X,y) e^{\frac{y^2}{4\w{t}}} \,dy=&\int_{R_+} (\partial_y y) u^2(X,y) e^{\frac{y^2}{4\w{t}}}\,dy\\
=&-2\int_{R_+} y u(X,y) \partial_y u(X,y) e^{\frac{y^2}{4\w{t}}}\,dy-\frac{1}{2\w{t}}\int_{R_+} y^2 u^2(X,y) e^{\frac{y^2}{4\w{t}}}\,dy.
\end{split}
\eeno
By integrating the above inequality over $\R^d$ with respect to the $X$ variables, we find
\beno
\begin{split}
\int_{\R^d\times\R_+} u^2& e^{\frac{y^2}{4\w{t}}} \,dX\,dy+\frac{1}{2\w{t}}\int_{\R^d\times\R_+} y^2 u^2 e^{\frac{y^2}{4\w{t}}}\,dX\,dy
=-2\int_{\R^d\times\R_+} y u \partial_y u e^{\frac{y^2}{4\w{t}}}\,dX\,dy\\
\leq& 2\bigg(\frac{1}{2\w{t}}\int_{\R^d\times\R_+} y^2 u^2 e^{\frac{y^2}{4\w{t}}}\,dX\,dy\bigg)^{1/2}\bigg( 2\w{t}\int_{\R^d\times\R_+} (\partial_y u)^2 e^{\frac{y^2}{4\w{t}}}\,dX\, dy\bigg)^{1/2}\\
\leq & \frac{1}{2\w{t}}\int_{\R^d\times\R_+} y^2 u^2 e^{\frac{y^2}{4\w{t}}}\,dX\,dy+2\w{t}\int_{\R^d\times\R_+} (\partial_y u)^2 e^{\frac{y^2}{4\w{t}}}\,dX\,dy.
\end{split}
\eeno
This leads to \eqref{S2eq1}.
\end{proof}

By virtue of  Lemma \ref{lem2.1},
we get, by using  a standard argument of energy estimate to the system \eqref{S1eq4}, that
\beq \label{Soeq1}
\|e^{\f{y^2}{8\w{t}}}\p_yu^s(t)\|_{L^2_{\rm v}}\leq C\w{t}^{-\f34}.
\eeq
We remark that intuitively  the quantity $\|e^\Psi\p_yu^s(t)\|_{L^2_{\rm v}}$ is a natural part to control the
time evolution of  the analytical radius to the analytic solutions of \eqref{S1eq1}. Yet it is obvious that \eqref{Soeq1}
 is not enough to guarantee that the quantity $\int_0^\infty\w{t}^\f14\|e^{\f{y^2}{8\w{t}}}\p_yu^s(t)\|_{L^2_{\rm v}}\,dt$ is finite,
 which will be required to go through our process below.

  To overcome
the above difficulty,  we are going to construct a special solution of \eqref{S1eq4} via its primitive function, that is,
 $u^s(t,y)=\p_y\psi^s(t,y).$  And we define $\psi^s$ through
\begin{equation}\label{S2eq2}
\left\{
\begin{array}{ll}
\p_t\psi^s-\p_y^2\psi^s=\e M(t,y), \quad (t,y)\in\R_+\times\R_+, \\
\psi^s|_{y=0}=0 \andf \lim_{y\to +\infty} \psi^s=0,\\
\psi^s|_{t=0}=0,
\end{array}
\right.
\end{equation}
where \beq \label{S2eq3}
M(t,y)\eqdefa -\int_y^\infty(1-\chi(y'))\,dy'f'(t)+ f(t)\chi'(y),
\eeq so that $m$ in \eqref{S1eq3} equals to $\p_yM.$ We observe that
$\int_y^\infty(1-\chi(y'))\,dy'=0$ for $y\geq 2,$  that is, $M(t,y)$ is supported on the interval $[0,2]$ with
respect to $y$ variable. It is crucial to observe that the quantity
\beq \label{S2eq4} G^s\eqdefa u^s+\f{y}{2\w{t}}\psi^s \eeq decays faster than $u^s,$ which is inspired
the definition of the function $\frak{g}$ in \cite{IV16}.

Indeed we first observe from \eqref{S2eq2} that
\beno
\p_t\Bigl(\f{y}{2\w{t}}\psi^s\Bigr)-\p_y^2\Bigl(\f{y}{2\w{t}}\psi^s\Bigr)+\f1{\w{t}}G^s=\e\f{y}{2\w{t}}M,
\eeno
from which, \eqref{S1eq4} and \eqref{S2eq4}, we find
\beq \label{S2eq7}
\left\{
\begin{array}{ll}
\p_tG^s-\p_y^2G^s+\w{t}^{-1}G^s=\e H \with H\eqdefa  m+\f{y}{2\w{t}}M,\\
G^s|_{y=0}=0 \andf \lim_{y\to +\infty} G^s(t,y)=0,\\
G^s|_{t=0}=0,
\end{array}
\right.
\eeq
With  $G^s$ being determined by \eqref{S2eq7},  by virtue of \eqref{S2eq4} and $\psi^s|_{y=0}=0,$ we obtain
\beq \label{S2eq7up}
\psi^s(t,y)= e^{-\f{y^2}{4\w{t}}}\int_0^ye^{-\f{(y')^2}{4\w{t}}}G^s(t,y')\,dy' \andf u^s(t,y)\eqdefa \p_y\psi^s(t,y).
\eeq
We observe that $u^s$ defined above satisfies the boundary condition $u^s(t,0)=0$ although the boundary condition
in \eqref{S2eq2} does not match with that in \eqref{S1eq4}.

As we already mentioned in \eqref{Soeq1}, a similar decay estimate for the weighted analytical norm to the solutions
of \eqref{S1eq5} can also be derived, which will not be enough to go through our process below.
 Inspired by the function $\frak{g}\eqdefa \p_yu+\f{y}{2\w{t}}u,$
which was introduced by Ignatova and Vicol in \cite{IV16}, here we introduce the function $G$ and $g$ in \eqref{S7eq4}.
It is a crucial observation here that the weighted analytic norm of $g$ can control the evolution of the analytic norm
to the solutions of \eqref{S1eq5}.

Next as in \cite{Ch04, CGP, mz1, mz2, ZHZ}, for any locally bounded function
$\Phi$ on $\R^+\times \R$, we define \beq\label{eq2.4}
u_\Phi(t,x,y)=\cF_{\xi\to
x}^{-1}\bigl(e^{\Phi(t,\xi)}\widehat{u}(t,\xi,y)\bigr). \eeq
Let $G$  and $G^s$ be determined respectively by \eqref{S7eq4} and \eqref{S2eq4},
we
introduce a key quantity $\theta(t)$ to describe the evolution of
the analytic band to the solutions of \eqref{S1eq5}:
\begin{equation}\label{1.9}
 \quad\left\{\begin{array}{l}
\displaystyle \dot{\tht}(t)=\w{t}^\f14\bigl(\|e^\Psi\p_yG^s(t)\|_{L^2_{\rm v}}+\e f(t)\|e^\Psi\chi'\|_{L^2_{\rm v}}+\|e^\Psi \p_yG_\Phi(t)\|_{\cB^{\f{1}2,0}}\bigr),\\
\displaystyle \tht|_{t=0}=0.
\end{array}\right.
\end{equation}
Here $\w{t}\eqdefa 1+t,$  the phase function $\Phi$ is defined by
\beq\label{eq2.6} \Phi(t,\xi)\eqdefa (\de-\lam \tht(t))|\xi|, \eeq
and the weighted function $\Psi(t,y)$ is determined by \beq
\label{eq2.7} \Psi(t,y)\eqdefa \f{y^2}{8\w{t}}, \eeq which
satisfies \beq\label{eq2.8} \p_t\Psi(t,y)+2(\p_y\Psi(t,y))^2= 0.
\eeq

We present now a more precise statement of our result in this paper.

\begin{thm}\label{th1.3}
{\sl Let $\Phi$ and $\Psi$ be defined respectively  by \eqref{eq2.6} and \eqref{eq2.7}. Then under the assumptions of Theorem \ref{th1.1},  there exist positive constants $c_0,$ $ \e_0$  and  $\lam$ so that for
 $u^s$ determined by \eqref{S2eq7up} and $\varepsilon\leq \e_0,$
 the system (\ref{S1eq5}) has a  unique global solution $u$ which satisfies $\sup_{t\in [0,\infty)}\theta(t)\leq \f\delta{2\lam},$ and
 \beq\label{eq7}
\bigl\| e^{\Psi} u_\Phi \bigr\|_{\wt{L}^\infty(\R_+;\cB^{\f12,0})}+\bigl\|e^{\Psi} \p_y u_\Phi\bigr\|_{\wt{L}^2(\R_+;\cB^{\f12,0})}\leq C\bigl\|e^{\f{y^2}{8}} e^{\delta |D_x|}u_0\bigr\|_{\cB^{\f12,0}}.
\eeq
Moreover, for $G$  given by \eqref{S7eq4},  there exists a positive constant $C$ so that
for any $t>0$ and $\ga\in(0,1),$ there hold
\beq \label{eq10}
\begin{split}
&\bigl\|\w{t'}^{\f34} e^{\Psi} u_\Phi\bigr\|_{{L}^\infty(\R^+;\cB^{\f12,0})}+\bigl\|\w{t'}^{\f34} e^{\Psi} \p_y u_\Phi\bigr\|_{\wt{L}^2(t/2,t;\cB^{\f12,0})}\leq C\|e^{\f{y^2}{8}}e^{\delta|D_x|}(\varphi_0,u_0)\|_{\cB^{\f12,0}},\\
&\bigl\|\w{t'}^{\f54} e^{\Psi} G_\Phi\bigr\|_{\wt{L}^\infty(\R^+;\cB^{\f12,0})}+\bigl\|\w{t'}^{\f54} e^{\Psi} \p_y G_\Phi\bigr\|_{\wt{L}^2(t/2,t;\cB^{\f12,0})}\leq C\|e^{\f{y^2}{8}}e^{\delta|D_x|}G_0\|_{\cB^{\f12,0}},\\
&\bigl\|\w{t'}^{\f54} e^{\ga\Psi} u_\Phi\bigr\|_{\wt{L}^\infty(\R^+;\cB^{\f12,0})}+\bigl\|\w{t'}^{\f54} e^{\ga\Psi} \p_y u_\Phi\bigr\|_{\wt{L}^2(t/2,t;\cB^{\f12,0})}\leq C\|e^{\f{y^2}{8}}e^{\delta|D_x|}G_0\|_{\cB^{\f12,0}}.
\end{split}
\eeq
}
\end{thm}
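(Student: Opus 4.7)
The plan is to prove Theorem \ref{th1.3} by a continuation/bootstrap argument on the analytic radius function $\tht(t)$ defined in \eqref{1.9}. Let $T^\star$ be the supremum of times $T>0$ for which the local solution satisfies $\tht(T)\le \f{\de}{2\lam}$, so that on $[0,T^\star]$ the phase obeys $\Phi(t,\xi)\ge \f{\de}{2}|\xi|$ and the conjugated function $u_\Phi$ is well defined. The goal is to show that on $[0,T^\star]$ the estimates \eqref{eq7}--\eqref{eq10} hold with constants independent of $T^\star$, and that the resulting bound on $\tht(T^\star)$ is strictly smaller than $\f{\de}{2\lam}$ once $c_0$ and $\e_0$ are taken small; this forces $T^\star=+\infty$.

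First I would treat the background profile. From \eqref{S2eq2}--\eqref{S2eq7up}, a weighted $L^2_{\rm v}$ energy estimate for \eqref{S2eq7} multiplied by $e^{2\Psi}G^s$, together with the cancellation $\p_t\Psi+2(\p_y\Psi)^2=0$, yields
\beno
\f12\f d{dt}\|e^\Psi G^s\|_{L^2_{\rm v}}^2+\|e^\Psi\p_yG^s\|_{L^2_{\rm v}}^2+\w{t}^{-1}\|e^\Psi G^s\|_{L^2_{\rm v}}^2\le \e\bigl(H\,\big|\,e^{2\Psi}G^s\bigr)_{L^2_{\rm v}}.
\eeno
The extra $\w{t}^{-1}G^s$ drift on the left, combined with the Poincar\'e coercivity of Lemma \ref{lem2.1}, delivers a dissipation of order $\w{t}^{-1}$. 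Coupled with the time integrability of $\e H$ granted by \eqref{S2eq12}, Gronwall's inequality gives $\w{t}^{5/4}\|e^\Psi\p_yG^s(t)\|_{L^2_{\rm v}}\lesssim \e\cC_f$ together with an analogous bound for $u^s$ via \eqref{S2eq7up}, so that the $u^s$- and $\e f$-contributions to $\dot\tht$ in \eqref{1.9} are integrable and controlled by $\e\cC_f$.

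Next, for the main unknown I apply $\D_k^{\rm h}$ to the equation for $u_\Phi$ coming from \eqref{S1eq5} after conjugation by $e^\Phi$, then take the $L^2_+$ inner product with $e^{2\Psi}\D_k^{\rm h}u_\Phi$. Using $\p_t\Psi+2(\p_y\Psi)^2=0$ and $\p_t\Phi=-\lam\dot\tht|\xi|$, I obtain
\beno
\f12\f d{dt}\|e^\Psi\D_k^{\rm h}u_\Phi\|_{L^2_+}^2+\|e^\Psi\p_y\D_k^{\rm h}u_\Phi\|_{L^2_+}^2+\lam\dot\tht\bigl\||D_x|^{\f12}e^\Psi\D_k^{\rm h}u_\Phi\bigr\|_{L^2_+}^2\le \mathcal{N}_k(t),
\eeno
where $\mathcal{N}_k$ regroups the analytically regularized transport and lift terms. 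By Bony's decomposition \eqref{Bony} in $x$ and the relation $v=-\int_0^y\p_xu\,dy'$, each contribution to $\sum_k 2^{k/2}\int_0^t\mathcal{N}_k\,dt'$ is bounded by factors of the form $\bigl\|e^\Psi u_\Phi\bigr\|_{\wt L^\infty_t(\cB^{1/2,0})}\bigl\|e^\Psi\p_yu_\Phi\bigr\|_{\wt L^2_t(\cB^{1/2,0})}^2$ plus cross terms containing $u^s$ and $\e f\chi$; these are absorbed either by the $\lam\dot\tht$-dissipation (once $\lam$ is large) or by the control of step one. This proves \eqref{eq7}. The analogous estimate for $G_\Phi$ starting from \eqref{S7eq6} carries the favorable extra term $\w{t}^{-1}\|e^\Psi\D_k^{\rm h}G_\Phi\|_{L^2_+}^2$; the dangerous source $v\p_y(u^s+\e f\chi)-\f12\w{t}^{-1}v\p_y(y\vf)$ and the nonlocal $\f{y}{\w{t}}$-integral are handled using the step-one bounds and the already-established estimate on $u_\Phi$.

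Finally, the decay estimates \eqref{eq10} are obtained by repeating the previous energy identities after multiplication by $\w{t}^{3/2}$ for $u$ and $\w{t}^{5/2}$ for $G$. For $G$, the built-in $\w{t}^{-1}G_\Phi$ coercive term in \eqref{S7eq6} combines with Lemma \ref{lem2.1} to absorb the time-derivative $\f52\w{t}^{3/2}\|e^\Psi G_\Phi\|^2$ of the weight, producing the rate $\w{t}^{5/4}$; this is precisely the novelty of the paper. For $u$ the slower rate $\w{t}^{3/4}$ follows by the same scheme, using additionally the mean-zero condition $\int_0^\infty u_0\,dy=0$ (which ensures $\vf(t,x,\cdot)$ vanishes at $y=\infty$) to boost the parabolic decay. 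For the final estimate in \eqref{eq10} with the reduced Gaussian weight $e^{\ga\Psi}$ ($\ga\in(0,1)$), the cancellation improves to $\p_t(\ga\Psi)+2\ga^2(\p_y\Psi)^2=-2\ga(1-\ga)(\p_y\Psi)^2$, generating an additional favorable contribution of order $\w{t}^{-2}\|y\,e^{\ga\Psi}u_\Phi\|^2$; combined with the $\w{t}^{5/4}$-weighted bound on $G_\Phi$ and the decomposition $u=G-\f{y}{2\w{t}}\vf$, this transfers the $\w{t}^{5/4}$ decay to $u$ itself. The main obstacle throughout is the nonlocal term $\f{y}{\w{t}}\int_y^\infty\p_y(u+u^s+\e f\chi)\p_x\vf\,dy'$ in \eqref{S7eq6}: estimating it in the $\w{t}^{5/4}$-weighted anisotropic Besov norm requires a delicate simultaneous exchange — one power of $y/\sqrt{\w{t}}$ absorbed into $e^\Psi$, the remaining $\w{t}^{-1/2}$ matched against the decay of $\|e^\Psi\p_yG_\Phi\|$, and the loss of one $x$-derivative in $\p_x\vf$ compensated by the $\lam\dot\tht|D_x|$-dissipation. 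Maintaining this balance consistently through the bootstrap is what fixes both the smallness thresholds $c_0,\e_0$ and the lower bound on $\lam$.
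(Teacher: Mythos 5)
Your overall architecture coincides with the paper's: a continuation argument on $\tht(t)$, a weighted energy estimate for $G^s$ exploiting the extra $\w{t}^{-1}G^s$ damping in \eqref{S2eq7}, analytic energy estimates for $u_\Phi$ and $G_\Phi$ with Bony decomposition and absorption of the derivative-losing terms by the $\lam\dot\tht|D_x|$ dissipation, time-weighted multipliers to extract decay, and the transfer $u=G-\f{y}{2\w{t}}\vf$ with the reduced Gaussian weight $e^{\ga\Psi}$ to obtain the last line of \eqref{eq10}. Those parts are sound and essentially identical to Sections 4--7 of the paper.

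There is, however, a concrete gap in your derivation of the first line of \eqref{eq10}. You claim the rate $\w{t}^{3/4}$ for $u$ ``follows by the same scheme'' after multiplying the energy identity by $\w{t}^{3/2}$. This cannot close: the only coercivity available for the $u$ equation is the Treves inequality \eqref{S2eq1}, which (after the integration by parts producing the factor $\f12$ in front of $\|e^\Psi\p_y u_\Phi\|^2$) yields at best $\f1{4\w{t}}\|e^\Psi\D_k^{\rm h}u_\Phi\|^2_{L^2_+}$, while the derivative of the weight contributes $\f34\w{t}^{-1}\cdot\w{t}^{3/2}\|e^\Psi\D_k^{\rm h}u_\Phi\|^2_{L^2_+}$; since $\f34>\f14$, the weight's derivative is not absorbed and the direct argument only gives $\w{t}^{-1/4}$ for $u$. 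The paper's actual route is two-step: first prove the borderline estimate $\|\w{t'}^{1/4}e^\Psi\vf_\Phi\|_{\wt L^\infty_t(\cB^{1/2,0})}\lesssim 1$ for the primitive (where $\hbar=\w{t}^{1/2}$ exactly matches the Poincar\'e constant), and then apply the smoothing estimate \eqref{1.18} on $[t/2,t]$ with $\hbar(t')=t'-t/2$ to convert $u=\p_y\vf$ into a gain of $t^{-1/2}$, giving $\f14+\f12=\f34$. Your mention of the mean-zero condition and of $\vf$ shows you sense that the primitive is needed, but the mechanism you describe is not the one that works. A second, smaller imprecision: the trilinear bound you display, $\|e^\Psi u_\Phi\|_{\wt L^\infty_t(\cB^{1/2,0})}\|e^\Psi\p_y u_\Phi\|^2_{\wt L^2_t(\cB^{1/2,0})}$, cannot be closed by smallness, since only $G_0$ (not $u_0$) is assumed small; every coefficient carrying the loss of an $x$-derivative must first be re-expressed through $\|e^\Psi\p_yG_\Phi\|_{\cB^{1/2,0}}$ (this is the content of Lemma \ref{S0lem1}) so that it is literally a piece of $\dot\tht$ and can be absorbed by the $\lam\dot\tht|D_x|$ term. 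You invoke that absorption elsewhere, so this is repairable, but as written the estimate would not close.
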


We remark that one of the crucial step to prove Theorem \ref{th1.3} is to control
the time evolution of  $\theta(t),$ which basically determines the analytical radius
 of the solutions to \eqref{S1eq5}. \smallskip

  Let us now sketch the structure of this paper below. \smallskip

 In Section \ref{sect1a}, we  shall prove the following proposition concerning the large time
 decay estimate of  $\|e^\Psi G^s(t)\|_{L^2_{\rm v}},$
 which in particular guarantees  that

\begin{prop}\label{prop2.1}
{\sl Let  $f(t)\in H^1(\R_+)$  and satisfy \eqref{S2eq12}. Then for  $G^s$ being determined by  \eqref{S2eq7},
 one has
\beq \label{S2eq9}
\int_{0}^\infty \w{t}^{\f14}\|e^\Psi\p_y G^s(t)\|_{L^2_{\rm v}}\,dt\leq  C\cC_{f}\e,
\eeq for the constant $\cC_{f}$ given by \eqref{S2eq12}.}\end{prop}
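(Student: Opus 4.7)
My plan is to carry out a weighted $L^2_{\rm v}$ energy estimate on the scalar heat-type equation \eqref{S2eq7} using the multiplier $e^{2\Psi}G^s,$ exploiting the cancellation $\p_t\Psi+2(\p_y\Psi)^2=0$ together with $\p_y^2\Psi=\f{1}{4\w{t}}.$ After integrating by parts in $y$ (the boundary terms at $y=0$ and at infinity vanish thanks to $G^s|_{y=0}=0$ and the decay), the weighted terms recombine so that only the contribution of $-\p_y^2\Psi\,e^{2\Psi}(G^s)^2$ survives, producing
\beno
\f12\f{d}{dt}\|e^\Psi G^s\|_{L^2_{\rm v}}^2+\|e^\Psi\p_y G^s\|_{L^2_{\rm v}}^2+\f{3}{4\w{t}}\|e^\Psi G^s\|_{L^2_{\rm v}}^2\leq \e\|e^\Psi H\|_{L^2_{\rm v}}\|e^\Psi G^s\|_{L^2_{\rm v}}.
\eeno
Since $H=m+\f{y}{2\w{t}}M$ is supported in $y\in[0,2],$ where the Gaussian weight $e^{\Psi}$ is uniformly bounded, the formulas defining $m$ and $M$ yield $\|e^\Psi H\|_{L^2_{\rm v}}\lesssim |f(t)|+|f'(t)|.$

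In a first step, I apply Lemma \ref{lem2.1} (with $d=0$) to trade the dissipation term for $\f{1}{2\w{t}}\|e^\Psi G^s\|_{L^2_{\rm v}}^2,$ yielding an effective zeroth order coefficient of $\f{5}{4\w{t}}.$ Dividing by $\|e^\Psi G^s\|_{L^2_{\rm v}},$ multiplying by the integrating factor $\w{t}^{5/4},$ and using $G^s|_{t=0}=0$ together with the first piece of the hypothesis \eqref{S2eq12}, I obtain the pointwise decay
\beno
\|e^\Psi G^s(t)\|_{L^2_{\rm v}}\leq C\e\,\cC_f\,\w{t}^{-5/4}.
\eeno

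In the second step, I recover control of the dissipation by returning to the full energy inequality and multiplying by $\w{t}^2.$ Writing $\w{t}^2\f{d}{dt}(\cdot)=\f{d}{dt}(\w{t}^2(\cdot))-2\w{t}(\cdot),$ the resulting unfavorable term $-\f{\w{t}}{4}\|e^\Psi G^s\|_{L^2_{\rm v}}^2$ is not absorbed by the natural damping, but it is tamed by the pointwise bound from step one: it is majorized by $C\e^2\cC_f^2\w{t}^{-3/2},$ which is integrable in time. The source contribution $\w{t}^2\e\|e^\Psi H\|_{L^2_{\rm v}}\|e^\Psi G^s\|_{L^2_{\rm v}}$ is similarly bounded by $C\e^2\cC_f\w{t}^{3/4}(|f|+|f'|),$ whose integral is again dominated by $\cC_f^2$ via the first piece of \eqref{S2eq12}. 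Integrating in time thus delivers $\int_0^\infty \w{t}^2\|e^\Psi\p_y G^s(t)\|_{L^2_{\rm v}}^2\,dt\leq C\e^2\cC_f^2,$ and a Cauchy-Schwarz in time,
\beno
\int_0^\infty\w{t}^{1/4}\|e^\Psi\p_y G^s(t)\|_{L^2_{\rm v}}\,dt\leq\Bigl(\int_0^\infty\w{t}^{-3/2}\,dt\Bigr)^{1/2}\Bigl(\int_0^\infty\w{t}^{2}\|e^\Psi\p_y G^s(t)\|_{L^2_{\rm v}}^2\,dt\Bigr)^{1/2},
\eeno
closes \eqref{S2eq9}.

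The main obstacle is the calibration of the temporal weight $\w{t}^b$ in the second step. The value $b=2$ is essentially sharp: any larger $b$ would make the term $(b/2-3/4)\w{t}^{b-1}\|e^\Psi G^s\|_{L^2_{\rm v}}^2$ arising from $\f{d}{dt}\w{t}^b$ non-integrable against the $\w{t}^{-5/2}$ pointwise decay from step one, while any smaller $b$ would leave the Cauchy-Schwarz integral $\int \w{t}^{1/2-b}\,dt$ divergent at infinity. It is therefore the precise interplay between the Poincar\'e gain $\f{1}{2\w{t}}$ of Lemma \ref{lem2.1} and the two ingredients in the definition of $\cC_f$ which makes the estimate close.
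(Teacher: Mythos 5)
Your argument is correct, and while the first step coincides with the paper's (the weighted energy identity, the Poincar\'e inequality of Lemma \ref{lem2.1} upgrading the damping coefficient to $\f54\w{t}^{-1}$, and the integrating factor $\w{t}^{\f54}$ giving $\|e^\Psi G^s(t)\|_{L^2_{\rm v}}\lesssim \e\,\cC_f\w{t}^{-\f54}$, exactly \eqref{S2eq5}), your second step genuinely diverges from the paper's. The paper keeps the critical temporal weight $\w{t}^{\f52}$ but can then only prove the dissipation bound \emph{locally} on dyadic windows, $\int_{t/2}^{t}\w{t'}^{\f52}\|e^\Psi\p_yG^s\|_{L^2_{\rm v}}^2\,dt'\lesssim\cC_f^2\e^2$ for each $t$ (this is \eqref{S2eq6}, whose proof also invokes the $L^2$-in-time piece $\|\w{t}^{\f74}H\|_{L^2}$ of $\cC_f$), and must then recover \eqref{S2eq9} by Cauchy--Schwarz on each block $[2^j,2^{j+1}]$ and summation of the geometric series $2^{-j/2}$. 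You instead sacrifice a power of $t$ and prove the \emph{global} bound $\int_0^\infty\w{t}^{2}\|e^\Psi\p_yG^s\|_{L^2_{\rm v}}^2\,dt\lesssim\e^2\cC_f^2$, after which a single Cauchy--Schwarz against $\int_0^\infty\w{t}^{-\f32}\,dt<\infty$ finishes; this works because the loss term $\f14\w{t}\|e^\Psi G^s\|_{L^2_{\rm v}}^2$ produced by the weight is integrable against the $\w{t}^{-\f52}$ decay from step one. Your route is the more streamlined of the two: it dispenses with the dyadic decomposition entirely and, as a bonus, never uses the second (square-integral) piece of \eqref{S2eq12}. One small quibble with your closing remark: the exponent $b=2$ is not forced, since any $b\in(\f32,\f52)$ makes both the loss term integrable ($b<\f52$) and the Cauchy--Schwarz factor finite ($b>\f32$); what is sharp is the open window, not the midpoint.
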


In what follows, we shall always assume that $t<T^\ast$ with
$T^\ast$ being determined by \beq\label{1.8a} T^\ast\eqdefa
\sup\bigl\{\ t>0,\ \ \tht(t) <\de/\lam\bigr\}. \eeq So that by
virtue of \eqref{eq2.6}, for any $t<T^\ast,$ there holds the
following convex inequality \beq\label{1.8bb} \Phi(t,\xi)\leq
\Phi(t,\xi-\eta)+\Phi(t,\eta)\quad\mbox{for}\quad \forall\
\xi,\eta\in \R. \eeq

In Section \ref{Sect4}, we shall deal with the {\it a priori} decay estimates for the analytic solutions of \eqref{S1eq2}.

\begin{prop}\label{S4prop2}
{\sl Let $\vf$ be a smooth enough solution of \eqref{S1eq2}. Then there exists a large enough constant
 $\lam$ so that for any nonnegative and non-decreasing function $h\in C^1(\R_+)$ and any $t_0\in  [0,t]$ with $t<T^\ast,$ one has
 \beq \label{S4eq11ag}
\|\w{t'}^{\f14} e^\Psi
\vf_\Phi\|_{\wt{L}^\infty_t(\cB^{\f{1}2,0})}\leq
C\|e^{\f{y^2}8} e^{\delta|D_x|}\vf_0\|_{\cB^{\f{1}2,0}}, \eeq
and
\beq \label{S4eq16}
\begin{split}
\|\hbar^{\f12}e^{\Psi}\vf_\Phi\|_{\wt{L}^\infty(t_0,t;\cB^{\f12,0})}
&+
\|\hbar^{\f12}e^{\Psi}\p_y\vf_\Phi\|_{\wt{L}^2(t_0,t;\cB^{\f12,0})}\\
&\leq \|\hbar^{\f12}e^{\Psi}\vf_\Phi(t_0)\|_{\cB^{\f12,0}}+\bigl\|\sqrt{\hbar'}e^{\Psi}\vf_\Phi\|_{\wt{L}^2(t_0,t;\cB^{\f12,0})}.
\end{split}
\eeq
}
\end{prop}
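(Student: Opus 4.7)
The proof is a weighted analytic energy estimate. First I would apply the dyadic operator $\D_k^{\rm h}$ to \eqref{S1eq2} and conjugate by the Chemin phase $e^{\Phi(t,D_x)}$; since $\dot\Phi=-\lam\dot\theta|\xi|$, this produces a damping contribution $\lam\dot\theta\,2^k\|e^\Psi\D_k^{\rm h}\vf_\Phi\|_{L^2_+}^2$ (up to the standard Bernstein factor). Multiplying the resulting equation by $e^{2\Psi}\D_k^{\rm h}\vf_\Phi$ and integrating over $\R^2_+$, the Neumann condition $\p_y\vf|_{y=0}=0$ and integration by parts in $y$, combined with the key identity \eqref{eq2.8}, convert the heat term into $\|e^\Psi\D_k^{\rm h}\p_y\vf_\Phi\|_{L^2_+}^2-\tfrac{1}{4\w{t}}\|e^\Psi\D_k^{\rm h}\vf_\Phi\|_{L^2_+}^2$, where the negative remainder is produced by $\p_y^2\Psi=\tfrac{1}{4\w{t}}$.

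Next, the transport and nonlocal contributions $(u+u^s+\e f\chi)\p_x\vf$ and $\int_y^\infty\p_y(u+u^s+\e f\chi(y'))\p_x\vf\,dy'$ are estimated using the horizontal Bony decomposition \eqref{Bony}, with the phase $\Phi$ moved across products via the convex inequality \eqref{1.8bb}. Each paraproduct piece is bounded by a quantity of the form $d_k^2\,\dot\theta\,2^k\|e^\Psi\D_k^{\rm h}\vf_\Phi\|_{L^2_+}^2$, with $\dot\theta$ matching precisely the definition \eqref{1.9}; Proposition \ref{prop2.1} will then supply the required time-integrability of the contributions coming from $u^s$ and $\e f\chi$. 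Choosing $\lam$ large absorbs these into the damping. Applying Lemma \ref{lem2.1} dyadically gives $\|e^\Psi\D_k^{\rm h}\p_y\vf_\Phi\|^2\geq\tfrac{1}{2\w{t}}\|e^\Psi\D_k^{\rm h}\vf_\Phi\|^2$, so that half of the diffusive term dominates the $\tfrac{1}{4\w{t}}$ loss and leaves the clean dyadic inequality
$$\tfrac12\tfrac{d}{dt}\|e^\Psi\D_k^{\rm h}\vf_\Phi\|_{L^2_+}^2+\tfrac12\|e^\Psi\D_k^{\rm h}\p_y\vf_\Phi\|_{L^2_+}^2+\tfrac{1}{4\w{t}}\|e^\Psi\D_k^{\rm h}\vf_\Phi\|_{L^2_+}^2+c\lam\dot\theta\,2^k\|e^\Psi\D_k^{\rm h}\vf_\Phi\|_{L^2_+}^2\leq 0.$$

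Extracting the two claimed estimates is then routine. For \eqref{S4eq11ag}, I multiply the displayed inequality by $\w{t}^{1/2}$: the term $-\tfrac14\w{t}^{-1/2}$ arising from $\tfrac{d}{dt}\w{t}^{1/2}$ is cancelled exactly by $\w{t}^{1/2}\cdot\tfrac{1}{4\w{t}}=\tfrac14\w{t}^{-1/2}$; integrating over $[0,t]$, taking square roots and summing in $k$ with weight $2^{k/2}$ yields \eqref{S4eq11ag}, the initial datum matching $\|e^{y^2/8}e^{\delta|D_x|}\vf_0\|_{\cB^{1/2,0}}$ since $\Psi(0,y)=y^2/8$ and $\Phi(0,\xi)=\delta|\xi|$. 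For \eqref{S4eq16}, I multiply instead by the non-decreasing weight $\hbar(t)$ and use $\hbar\,\tfrac{d}{dt}X=\tfrac{d}{dt}(\hbar X)-\hbar'X$: the $\hbar'$ piece moves to the right-hand side, producing the $\|\sqrt{\hbar'}e^\Psi\vf_\Phi\|_{\wt L^2}$ contribution, while the positive $\tfrac{1}{4\w{t}}$ and damping terms are simply discarded.

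The main obstacle is the nonlocal integral term $2\int_y^\infty\p_y(u+u^s+\e f\chi(y'))\p_x\vf\,dy'$. Because the weight $e^{2\Psi(t,y)}$ depends on the outer variable $y$ while the integrand is evaluated at $y'\geq y$, one must exploit the monotonicity $\Psi(t,y)\leq\Psi(t,y')$ to slip a harmless factor $e^{\Psi(t,y)-\Psi(t,y')}\leq 1$ inside the $y'$-integral, and then use a Cauchy--Schwarz pairing in $y'$ in a way compatible with the $\cB^{\f12,0}$ paraproduct structure. Engineering the resulting bound to line up precisely with the three contributions in \eqref{1.9}—so that Proposition \ref{prop2.1} eventually gives time-integrability for the parts involving $u^s$ and $\e f\chi$—is the most delicate technical point.
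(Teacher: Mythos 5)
Your proposal follows essentially the same route as the paper: a dyadic weighted energy estimate on \eqref{S1eq2}, Bony decomposition combined with the convexity \eqref{1.8bb} so that every nonlinear contribution is dominated by $d_k^2\,\dot\tht(t)\,2^k$ times the relevant squared norm and absorbed into the $\lam\dot\tht 2^k$ damping for $\lam$ large, and the Treves--Poincar\'e inequality of Lemma \ref{lem2.1} to handle the time weight. The only structural difference is where Lemma \ref{lem2.1} enters: the paper first proves the general-$\hbar$ inequality \eqref{S4eq13}, keeping only $\f12\|e^\Psi\D_k^{\rm h}\p_y\vf_\Phi\|_{L^2_+}^2$ via Young's inequality and leaving $\int_{t_0}^t\hbar'\|e^\Psi\D_k^{\rm h}\vf_\Phi\|_{L^2_+}^2dt'$ on the right, and only for $\hbar=\w{t}^{1/2}$ applies \eqref{S2eq1} to that time-integrated term, which then exactly consumes the diffusion term; you apply Poincar\'e at the differential level instead. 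One constant in your ``clean dyadic inequality'' is off by a factor of two: after the $\p_y^2\Psi$ loss $\f1{4\w{t}}\|e^\Psi\D_k^{\rm h}\vf_\Phi\|^2$ is subtracted, Lemma \ref{lem2.1} lets you retain \emph{either} the full $\f1{4\w{t}}\|e^\Psi\D_k^{\rm h}\vf_\Phi\|^2$ damping \emph{or} $\f12\|e^\Psi\D_k^{\rm h}\p_y\vf_\Phi\|^2$, not both simultaneously as displayed. This is harmless --- \eqref{S4eq11ag} needs only the damping (discard the diffusion) and \eqref{S4eq16} needs only the diffusion (discard the damping) --- but the single inequality should be split into the two cases. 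Two further remarks: the $x$-independent transport term $(u^s+\e f\chi)\p_x\vf$ requires no estimate at all since it vanishes by antisymmetry (\eqref{S4eq9b}), and Proposition \ref{prop2.1} plays no role in this proposition (it enters only later to bound $\tht(t)$); within the {\it a priori} estimate the $u^s$ and $\e f\chi$ contributions are absorbed simply because they are built into the definition \eqref{1.9} of $\dot\tht$. For the nonlocal term, the delicate point you flag is resolved in the paper's Lemma \ref{S4lem3} and \eqref{S4eq9a} by writing $e^{2\Psi(y)}\le e^{\Psi(y)}e^{-\f34\Psi(y)}e^{\f74\Psi(y')}$ for $y'\ge y$, the leftover factor $\|e^{-\f34\Psi}\|_{L^2_{\rm v}}\sim\w{t}^{\f14}$ supplying exactly the $\w{t}^{\f14}$ appearing in \eqref{1.9}.
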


Section \ref{Sect5} is devoted to the {\it a priori} decay estimates for the analytic solutions of \eqref{S1eq5}:

\begin{prop}\label{S5prop2}
{\sl Let $u$ be a smooth enough solution of \eqref{S1eq5}. Then there exists a large enough constant
 $\lam$ so that for any $t<T^\ast,$ we have
\beq \label{S4eq12}
\begin{split} \|\w{t'}^{\f34}e^\Psi
u_\Phi\|_{{L}^\infty_t(\cB^{\f{1}2,0})}+&\|\w{t'}^{\f34} e^\Psi
\p_y u_\Phi\|_{\wt{L}^2(t/2,t;\cB^{\f12,0})}\leq
C\|e^{\f{y^2}8} e^{\delta|D_x|}\left(\vf_0,u_0\right)\|_{\cB^{\f{1}2,0}}.\end{split} \eeq
}
\end{prop}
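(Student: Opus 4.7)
The proof of Proposition~\ref{S5prop2} proceeds by a weighted analytic Chemin--Lerner energy estimate for the $u$-equation \eqref{S1eq5}, performed on $u_\Phi$ with the Gaussian weight $e^{\Psi}$, the tangential analyticity weight $e^{\Phi(t,|D_x|)}$, and the time weight $\hbar(t)=\w{t}^{3/2}$. I would begin by applying the horizontal block $\Delta_k^{\rm h}$ to the equation satisfied by $u_\Phi$, pairing with $\Delta_k^{\rm h} u_\Phi\,e^{2\Psi}$ in $L^2_+$, and exploiting the cancellation $\p_t\Psi+2(\p_y\Psi)^2=0$ from \eqref{eq2.8} together with the boundary condition $u|_{y=0}=0$. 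After integration by parts in $y$ one arrives, modulo a $\p_y^2\Psi=1/(4\w{t})$ correction, at the differential identity featuring the dissipation $\|e^\Psi\p_y\Delta_k^{\rm h}u_\Phi\|_{L^2_+}^2$, the weight error $-\tfrac{1}{4\w{t}}\|e^\Psi\Delta_k^{\rm h}u_\Phi\|_{L^2_+}^2$, and the analyticity damping $\lambda\dot\tht(t)\||D_x|^{1/2}e^\Psi\Delta_k^{\rm h}u_\Phi\|_{L^2_+}^2$ produced by differentiating $e^\Phi$ in time. The latter is reserved for absorbing nonlinear losses of horizontal derivatives.

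Next I would handle the nonlinearity via Bony's horizontal paraproduct decomposition \eqref{Bony} and the anisotropic Bernstein lemma (Lemma~\ref{lem:Bern}). The dangerous term is $v\p_y u$ with $v=-\int_0^y\p_x u\,dy'$, which formally loses one tangential derivative; however, the sub-additivity \eqref{1.8bb} of $\Phi$, valid for $t<T^\ast$, distributes the analyticity weight between the two factors and converts the lost derivative into the factor $|D_x|$, which is then swallowed by $\lambda\dot\tht\||D_x|^{1/2}\cdot\|^2$ once $\lambda$ is taken large enough relative to the data. The coupling terms $(u^s+\e f\chi)\p_x u$ and $v\p_y(u^s+\e f\chi)$ are controlled through Proposition~\ref{prop2.1} for $u^s$, the assumption \eqref{S2eq12} on $f$, and the smoothness and compact support of $\chi$, yielding $O(\e)$ errors with $L^1$-in-time weights.

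To obtain the sharp $\w{t'}^{3/4}$ decay rate I would multiply the differential inequality by $\hbar(t)=\w{t'}^{3/2}$ and rearrange. The Poincar\'e inequality of Lemma~\ref{lem2.1} is applied in the form $\|e^\Psi\p_y\Delta_k^{\rm h}u_\Phi\|_{L^2_+}^2-\tfrac{1}{4\w{t}}\|e^\Psi\Delta_k^{\rm h}u_\Phi\|_{L^2_+}^2\geq\tfrac{1}{4\w{t}}\|e^\Psi\Delta_k^{\rm h}u_\Phi\|_{L^2_+}^2$; after absorption this gives a positive term on the left of the size $\w{t'}^{1/2}\|e^\Psi\Delta_k^{\rm h}u_\Phi\|_{L^2_+}^2$, which cancels half of the $\hbar'\|\cdot\|^2\sim\tfrac32\w{t'}^{1/2}\|\cdot\|^2$ generated by the time weight. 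The remaining deficit is compensated by invoking Proposition~\ref{S4prop2}: applying \eqref{S4eq16} with $\hbar(t')=\w{t'}^{1/2}$ and using $u=\p_y\vf$ supplies a finite bound on $\|\w{t'}^{1/4}e^\Psi u_\Phi\|_{\wt{L}^2(\R_+;\cB^{1/2,0})}$, which feeds back into the Gronwall step. Summing in $k\in\Z$ inside the Chemin--Lerner norm and taking $\sup$ in time then yields the announced \eqref{S4eq12}; the restriction of the second summand to $[t/2,t]$ is exactly what is needed so that the contribution of this auxiliary $\wt{L}^2_t$-term stays uniformly bounded as $t\to\infty$.

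The main obstacle is the very fact that a direct energy estimate on the $u$-equation alone, no matter how carefully one exploits Treves's inequality and the $\p_y^2\Psi$ correction, yields only the decay rate $\w{t'}^{1/4}$ already present in Proposition~\ref{S4prop2} for $\vf$. Breaking this barrier to reach $\w{t'}^{3/4}$ thus requires the combined use of (i) the Gaussian weight and the Treves Poincar\'e, (ii) the reinjection of $\vf$-information through $u=\p_y\vf$, and (iii) the shifted-interval integration, all while preserving the bootstrap $\tht(t)<\delta/\lam$ of \eqref{1.8a} by fixing $c_0$ and $\e_0$ small enough that the $\lambda\dot\tht$-damping strictly dominates the $v\p_y u$ loss.
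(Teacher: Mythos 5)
Your overall architecture (a weighted Chemin--Lerner energy estimate for $u_\Phi$, absorption of the derivative loss in $v\p_yu$ by the $\lam\dot\tht|D_x|$ term, Treves's inequality, and reinjection of the $\vf$-decay through $u=\p_y\vf$) is the same as the paper's, which first proves the general inequality \eqref{1.18} for an arbitrary nonnegative non-decreasing weight $\hbar$ and then specializes. But the specific mechanism you propose for reaching the rate $\w{t}^{3/4}$ does not close. With $\hbar(t)=\w{t}^{3/2}$ on $[0,t]$ the error term is a positive multiple of $\int_0^t\w{t'}^{\f12}\|e^\Psi\D_k^\h u_\Phi\|_{L^2_+}^2\,dt'$: Treves's inequality recovers at best $\hbar\cdot\f1{2\w{t}}=\f12\w{t}^{\f12}$ against $\hbar'=\f32\w{t}^{\f12}$, so a nontrivial multiple of $\|\w{t'}^{\f14}e^\Psi u_\Phi\|_{L^2(0,t)}^2$ survives. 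That quantity is \emph{not} uniformly bounded in $t$: if the decay $\|e^\Psi u_\Phi(t)\|_{\cB^{\f12,0}}\sim\w{t}^{-\f34}$ being proved is sharp, then $\w{t}^{\f12}\|e^\Psi u_\Phi\|^2\sim\w{t}^{-1}$ and the integral diverges logarithmically. Nor can it be extracted from Proposition \ref{S4prop2}: inserting $\hbar=\w{t}^{\f12}$ into \eqref{S4eq16} and estimating $\|\sqrt{\hbar'}e^{\Psi}\vf_\Phi\|_{\wt{L}^2_t}^2=\f12\int_0^t\w{t'}^{-\f12}\|e^\Psi\vf_\Phi\|^2\,dt'$ by Treves gives back exactly $\|\w{t'}^{\f14}e^\Psi\p_y\vf_\Phi\|_{\wt{L}^2_t}^2$, so the inequality becomes vacuous for this purpose. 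This is the same borderline phenomenon the authors themselves flag in the introduction (the failure of $\int_0^\infty\w{t}^{\f14}\|e^\Psi\p_yu_\Phi(t)\|_{\cB^{\f12,0}}\,dt$ to be finite), and it is why they introduce $G$ rather than working with $u$ alone.

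The paper circumvents this by never placing a globally growing weight on $[0,t]$. It first takes $\hbar(t')=t'-t/2$ on $[t/2,t]$ in \eqref{1.18}: this weight vanishes at the left endpoint, so there is no initial-data term, and $\hbar'\equiv1$ makes the error $\|e^\Psi u_\Phi\|_{\wt{L}^2(t/2,t;\cB^{\f12,0})}=\|e^\Psi\p_y\vf_\Phi\|_{\wt{L}^2(t/2,t;\cB^{\f12,0})}$, which by \eqref{S4eq16} with $\hbar\equiv1$, $t_0=t/2$ is bounded by $\|e^\Psi\vf_\Phi(t/2)\|_{\cB^{\f12,0}}\lesssim\w{t}^{-\f14}\|e^{\f{y^2}8}e^{\delta|D_x|}\vf_0\|_{\cB^{\f12,0}}$ thanks to \eqref{S4eq11ag}. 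This already yields the pointwise bound $t^{\f12}\|e^\Psi u_\Phi(t)\|_{\cB^{\f12,0}}\lesssim\w{t}^{-\f14}$. A second pass with $\hbar(t')=t'$ on $[t/2,t]$, now using the just-obtained pointwise bound at $t'=t/2$ for the initial-data term, gives the $\wt{L}^2(t/2,t)$ control of $\w{t'}^{\f34}e^\Psi\p_yu_\Phi$. You should replace your single $\hbar=\w{t}^{3/2}$ step by this two-pass argument with weights anchored at $t/2$; as written, the compensation step relying on a finite $\|\w{t'}^{\f14}e^\Psi u_\Phi\|_{\wt{L}^2(\R_+;\cB^{\f12,0})}$ is a genuine gap.
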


In Section \ref{Sect7}, we shall deal with the {\it a priori} decay estimates of $G,$ which will be the most crucial ingredient
used in the proof of Theorem \ref{th1.3}.

\begin{prop}\label{S7prop2}
{\sl Let $G$ be determined by \eqref{S7eq4}. Then there exists a large enough constant
 $\lam$ so that for any $t<T^\ast,$ we have
\beq \label{S7eq24}
\begin{split} \|\w{t'}^{\f54}e^\Psi
G_\Phi\|_{\wt{L}^\infty_t(\cB^{\f{1}2,0})}+&\|\w{t'}^{\f54} e^\Psi
\p_y G_\Phi\|_{\wt{L}^2(t/2,t;\cB^{\f12,0})}\\
+&\int_0^t\w{t'}^{\f14}\|e^\Psi
\p_y G_\Phi(t')\|_{\cB^{\f12,0}}\,dt'
\leq
C\|e^{\f{y^2}8} e^{\delta|D_x|}G_0\|_{\cB^{\f{1}2,0}}.\end{split} \eeq
}
\end{prop}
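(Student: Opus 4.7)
My plan is to prove Proposition \ref{S7prop2} by a weighted analytic energy estimate on the equation \eqref{S7eq6} for $G$, exploiting both the explicit zeroth-order damping $\langle t\rangle^{-1}G$ built into the left-hand side of \eqref{S7eq6} and the Poincar\'e--Treves inequality (Lemma \ref{lem2.1}) to produce the improved time-decay rate $\langle t\rangle^{-5/4}$. Applying $e^{\Phi(t,|D_x|)}\Delta_k^{\rm h}$ to \eqref{S7eq6} and testing against $e^{2\Psi}\Delta_k^{\rm h}G_\Phi$ on $\R^2_+$, the boundary condition $G|_{y=0}=0$ combined with the pointwise relations $\p_t\Psi=-2(\p_y\Psi)^2$ (see \eqref{eq2.8}) and $\p_y^2\Psi=\tfrac{1}{4\langle t\rangle}$ cancels the two $(\p_y\Psi)^2$-contributions produced respectively by $\p_t$ and by the diffusive integration by parts, giving
\begin{equation*}
\tfrac{1}{2}\tfrac{d}{dt}\|e^\Psi\Delta_k^{\rm h}G_\Phi\|_{L^2_+}^2+\lam\dot\tht\,2^k\|e^\Psi\Delta_k^{\rm h}G_\Phi\|_{L^2_+}^2+\|e^\Psi\p_y\Delta_k^{\rm h}G_\Phi\|_{L^2_+}^2+\tfrac{3}{4\langle t\rangle}\|e^\Psi\Delta_k^{\rm h}G_\Phi\|_{L^2_+}^2=-\bigl(e^\Phi\Delta_k^{\rm h}N\,\big|\,e^{2\Psi}\Delta_k^{\rm h}G_\Phi\bigr)_{L^2_+},
\end{equation*}
where $N$ collects every remaining (convective, source, non-local) term of \eqref{S7eq6}.

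Multiplying this identity by $2\langle t\rangle^{5/2}$ and invoking Lemma \ref{lem2.1} in the sharp form $\|e^\Psi G_\Phi\|_{L^2_+}^2\leq 2\langle t\rangle\|e^\Psi\p_yG_\Phi\|_{L^2_+}^2$ lets one exactly balance the growth $\tfrac{5}{2}\langle t\rangle^{3/2}\|e^\Psi G_\Phi\|^2$ produced by differentiating the time-weight against the effective damping $\tfrac{3}{2}\langle t\rangle^{3/2}\|e^\Psi G_\Phi\|^2+\langle t\rangle^{3/2}\|e^\Psi G_\Phi\|^2=\tfrac{5}{2}\langle t\rangle^{3/2}\|e^\Psi G_\Phi\|^2$, so that
\begin{equation*}
\tfrac{d}{dt}\bigl(\langle t\rangle^{5/2}\|e^\Psi\Delta_k^{\rm h}G_\Phi\|_{L^2_+}^2\bigr)+2\lam\dot\tht\,2^k\langle t\rangle^{5/2}\|e^\Psi\Delta_k^{\rm h}G_\Phi\|_{L^2_+}^2\leq 2\langle t\rangle^{5/2}\bigl|\bigl(e^\Phi\Delta_k^{\rm h}N\,\big|\,e^{2\Psi}\Delta_k^{\rm h}G_\Phi\bigr)\bigr|.
\end{equation*}
Integrating, taking square roots and summing against $2^{k/2}$ in $k\in\Z$ produces the $\wt L^\infty(\R_+;\cB^{1/2,0})$-bound on $\langle t'\rangle^{5/4}e^\Psi G_\Phi$ in \eqref{S7eq24}. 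For the companion $\wt L^2(t/2,t;\cB^{1/2,0})$-bound on $\langle t'\rangle^{5/4}e^\Psi\p_yG_\Phi$, I integrate the same weighted energy equality over $[t/2,t]$ without invoking Lemma \ref{lem2.1}: the boundary contribution $\langle t/2\rangle^{5/2}\|e^\Psi G_\Phi(t/2)\|^2$ and the right-hand-side contribution $\int_{t/2}^t\langle t'\rangle^{3/2}\|e^\Psi G_\Phi\|^2\,dt'$ are both bounded by $\leq C+C\int_{t/2}^t\langle t'\rangle^{-1}\,dt'=O(1)$ via the $\wt L^\infty$-bound just obtained, so the dissipation $\int_{t/2}^t\langle t'\rangle^{5/2}\|e^\Psi\p_yG_\Phi\|^2\,dt'$ stays on the left uniformly in $t$.

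The main effort then goes into the right-hand side $N$. For the convective parts $(u+u^s+\eps f\chi)\p_xG$ and $v\p_yG$ I follow the Bony-decomposition machinery used in Propositions \ref{S4prop2}--\ref{S5prop2}: the convex inequality \eqref{1.8bb} handles the analytic phase $e^\Phi$, while the divergence-free identity $\p_xu+\p_yv=0$ permits an integration by parts in $y$; the resulting bounds are absorbed either by the retained dissipation on the left or, via the $\lam\dot\tht\,2^k$-term, by the very definition of $\tht(t)$ in \eqref{1.9}. The source contribution $v\p_y(u^s+\eps f\chi)$ is controlled by combining Proposition \ref{prop2.1} (which supplies the required $\langle t\rangle^{1/4}$-integrability of $\|e^\Psi\p_yG^s\|_{L^2_{\rm v}}$) with the $\cB^{1/2,0}$-estimate of $\vf$ in \eqref{S4eq11ag}. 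For the non-local term $\tfrac{y}{\langle t\rangle}\int_y^\infty\bigl(\p_y(u+u^s+\eps f\chi)\p_x\vf\bigr)\,dy'$ the crucial observation is the algebraic identity $\tfrac{y}{\langle t\rangle}e^\Psi=4\p_ye^\Psi$: an integration by parts in $y$ transfers the factor $y/\langle t\rangle$ onto $e^\Psi$ and turns the non-local contribution into an expression absorbable by $\|e^\Psi\p_yG_\Phi\|$, while the companion term $-\tfrac{1}{2\langle t\rangle}v\p_y(y\vf)=-\tfrac{1}{2\langle t\rangle}(v\vf+vyu)$ is treated via the $\langle t\rangle^{-3/4}$-decay of $\vf$ and $u$ supplied by \eqref{S4eq11ag} and \eqref{S4eq12}.

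Finally, the last estimate of \eqref{S7eq24}, namely the $L^1$-in-time bound of $\langle t'\rangle^{1/4}\|e^\Psi\p_yG_\Phi\|_{\cB^{1/2,0}}$ on the full interval $[0,t]$, is produced by rerunning the above analysis with the lighter time weight $\langle t\rangle^2$ instead of $\langle t\rangle^{5/2}$: since $2<\tfrac{5}{2}$, the growth $2\langle t\rangle\|e^\Psi G_\Phi\|^2$ is strictly dominated by the damping $\tfrac{3}{2}\langle t\rangle\|e^\Psi G_\Phi\|^2$ combined with half of the Poincar\'e--Treves bound, so the dissipation $\langle t\rangle^2\|e^\Psi\p_yG_\Phi\|^2$ is retained on all of $[0,t]$; a Cauchy--Schwarz against $\langle t'\rangle^{-3/4}\in L^2(\R_+)$ then converts the resulting $\wt L^2(0,t;\cB^{1/2,0})$-control of $\langle t'\rangle e^\Psi\p_yG_\Phi$ into the claimed $L^1$-bound. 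A standard bootstrap closed by the smallness assumptions \eqref{S1eq6} on $G_0$ and \eqref{S2eq12} on $f$ guarantees $\sup_{t<T^\ast}\tht(t)<\delta/(2\lam)$ and hence $T^\ast=+\infty$. The main obstacle I anticipate is the bookkeeping of the non-local integral term within the analytic-Gaussian framework: matching the factor $y/\langle t\rangle$ with $e^\Psi$ via $\tfrac{y}{\langle t\rangle}e^\Psi=4\p_ye^\Psi$, while simultaneously preserving the analytic radius through \eqref{1.8bb} and the sharp decay rate $\langle t\rangle^{-5/4}$, is the delicate point where the whole scheme stands or falls.
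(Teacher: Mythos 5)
Your overall architecture is the same as the paper's: a weighted analytic energy estimate on \eqref{S7eq6} in which the built-in damping $\w{t}^{-1}G$ together with the Treves inequality of Lemma \ref{lem2.1} exactly balances the growth coming from the weight $\w{t}^{5/2}$ (your bookkeeping $\tfrac32+1=\tfrac52$ versus the paper's $1+\tfrac14=\tfrac54$ for the half-energy is just a cosmetic difference in where Young's inequality is applied), the Bony decomposition with \eqref{1.8bb} and absorption by the $\lam\dot\tht\,2^k$ term for the convection, and the integration over $[t/2,t]$ with the $\wt L^\infty$ bound controlling the boundary and weight-derivative contributions to recover the $\wt L^2(t/2,t)$ estimate of $\w{t'}^{5/4}\p_yG_\Phi$. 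Where you genuinely diverge is the third term of \eqref{S7eq24}: you rerun the estimate with the subcritical weight $\w{t}^2$, for which the weight-derivative term is cancelled by the zeroth-order damping alone so that the full dissipation $\|\w{t'}e^\Psi\p_yG_\Phi\|_{\wt L^2_t(\cB^{1/2,0})}$ survives on all of $[0,t]$, and then apply Cauchy--Schwarz against $\w{t'}^{-3/4}\in L^2(\R_+)$. The paper instead decomposes $[0,t]$ into dyadic blocks $[2^j,2^{j+1}]$ and applies Cauchy--Schwarz on each block using the $\wt L^2(t/2,t)$ bound, summing the resulting geometric series. Both arguments are valid; yours is arguably more streamlined, and it is compatible with the Chemin--Lerner norms since the $k$-summation and the H\"older step in time commute.

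One caution: for the term $-\tfrac{1}{2\w{t}}v\p_y(y\vf)$ you propose to use the separate decay of $\vf$ and $u$, citing a $\w{t}^{-3/4}$ rate from \eqref{S4eq11ag}; but \eqref{S4eq11ag} only yields $\w{t}^{-1/4}$ for $\vf$, and feeding that into the $\w{t}^{5/2}$-weighted estimate produces a contribution of the form $2^k\int\w{t'}^{3/2}\|e^\Psi\D_k^{\rm h}G_\Phi\|^2\,dt'$ that is not multiplied by $\dot\tht$ and hence is not absorbable by the $\lam\dot\tht\,2^k$ term; it would eat into the exactly-balanced zeroth-order budget and threaten the sharp $\w{t}^{-5/4}$ rate unless you invoke smallness there. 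The paper avoids this by the pointwise conversion \eqref{S7eq21}, which bounds $\w{t}^{-1}\|e^{\ga\Psi}\D_k^{\rm h}\p_y(y\vf)_\Phi\|_{L^2_+}$ directly by $\|e^\Psi\D_k^{\rm h}\p_yG_\Phi\|_{L^2_+}$, so that the whole term is again proportional to $\dot\tht$ via \eqref{1.9} and is absorbed in the standard way. You should route this term through \eqref{S7eq21} (or an equivalent identity expressing $\p_y(y\vf)$ in terms of $G$ and $\p_yG$) rather than through the decay of $\vf$.
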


With the above propositions,  we still need the follow lemma concerning the relations between the function $G$ given by \eqref{S7eq4}
and the solutions of \eqref{S1eq5} and \eqref{S1eq2},  which will
be also frequently used in the subsequent sections.

\begin{lem}\label{S0lem1}
{\sl Let $G$ and $\Psi$ be defined respectively  by \eqref{S7eq4} and \eqref{eq2.7}.
Let $\vf$ and $u$ be  smooth enough solution of \eqref{S1eq2} and \eqref{S1eq5} respectively on $[0,T].$
 Then, for any $
\ga\in (0,1)$ and $t\leq T,$ one has
\ben
&&
\bigl\|e^{\ga\Psi}\D_k^\h u_\Phi(t)\bigr\|_{L^2_+}\lesssim \bigl\|e^{\Psi}\D_k^\h G_\Phi(t)\bigr\|_{L^2_+}; \label{S2eq21}\\
&&
\bigl\|e^{\ga\Psi}\D_k^\h\p_yu_\Phi(t)\bigr\|_{L^2_+}\lesssim \bigl\|e^{\Psi}\D_k^\h\p_yG_\Phi(t)\bigr\|_{L^2_+}; \label{S2eq20}\\
&&
\w{t}^{-1}\|e^{\ga\Psi}\D_k^\h\p_y(y\vf)_\Phi(t)\|_{L^2_+}+\w{t}^{-\f34}\|e^{\ga\Psi}\D_k^\h\p_y(y\vf)_\Phi(t)\|_{L^\infty_{\rm v}(L^2_\h)}
\lesssim \|e^\Psi\D_k^\h \p_yG_\Phi\|_{L^2_+}. \label{S7eq21} \een}
\end{lem}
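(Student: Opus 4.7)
The starting point is the algebraic observation that, since $u=\p_y\varphi$, the definition $G = u + \frac{y}{2\w{t}}\varphi$ is equivalent to the first-order linear ODE $\p_y\varphi + \frac{y}{2\w{t}}\varphi = G$ in $y$. I would use the integrating factor $e^{2\Psi}$ (since $\p_y e^{2\Psi} = \frac{y}{2\w{t}}e^{2\Psi}$), together with the sufficient decay of $\varphi$ at infinity provided by the analytic weighted framework (which eliminates the homogeneous modes $Ce^{-2\Psi}$), to invert this ODE as
\[ \varphi(t,x,y) = -e^{-2\Psi(t,y)}\int_y^\infty e^{2\Psi(t,y')}G(t,x,y')\,dy'. \]
Since $\D_k^\h$ and $e^{\Phi(t,D_x)}$ act only in $x$, the same relation persists after replacing $(u,\varphi,G)$ by $(\D_k^\h u_\Phi, \D_k^\h \varphi_\Phi, \D_k^\h G_\Phi)$.

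For \eqref{S2eq21}, I would decompose $u = G - \frac{y}{2\w{t}}\varphi$ into
\[ \|e^{\ga\Psi}\D_k^\h u_\Phi\|_{L^2_+} \leq \|e^\Psi \D_k^\h G_\Phi\|_{L^2_+} + \frac{1}{2\w{t}}\|ye^{\ga\Psi}\D_k^\h \varphi_\Phi\|_{L^2_+}, \]
then combine the elementary pointwise bound $y^2 e^{2(\ga-1)\Psi(t,y)} \leq 4\w{t}/[e(1-\ga)]$ (from maximizing $y^2 e^{-(1-\ga)y^2/(4\w{t})}$ in $y$) with the energy identity
\[ \|e^{\ga\Psi}u\|^2_{L^2_+} + \frac{1-\ga}{4\w{t}^2}\|ye^{\ga\Psi}\varphi\|^2_{L^2_+} = \|e^{\ga\Psi}G\|^2_{L^2_+} + \frac{1}{2\w{t}}\|e^{\ga\Psi}\varphi\|^2_{L^2_+} \]
(obtained by squaring the ODE against $e^{2\ga\Psi}$ and integrating by parts, with vanishing boundary terms) to close the estimate by bootstrap; the condition $\ga<1$ is crucial, as the strictly positive coefficient $(1-\ga)/(4\w{t}^2)$ on the left is what absorbs the $\varphi$-correction on the right, a mechanism which fails at $\ga=1$. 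For \eqref{S2eq20}, differentiating $u = G - \frac{y\varphi}{2\w{t}}$ in $y$ produces $\p_y u + \frac{y}{2\w{t}}u = \p_y G - \frac{\varphi}{2\w{t}}$, an ODE of the same structure; the argument just used applies to $(\p_y u,\, \p_y G-\varphi/(2\w{t}))$, the source $\frac{\varphi}{2\w{t}}$ being controlled via \eqref{S2eq21} together with Poincaré's inequality (Lemma \ref{lem2.1}).

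For \eqref{S7eq21}, differentiating $G = u + y\varphi/(2\w{t})$ in $y$ gives $\frac{1}{2\w{t}}\p_y(y\varphi) = \p_y G - \p_y u$, whence
\[ \frac{1}{\w{t}}\|e^{\ga\Psi}\D_k^\h\p_y(y\varphi)_\Phi\|_{L^2_+} \leq 2\|e^\Psi\D_k^\h\p_y G_\Phi\|_{L^2_+} + 2\|e^{\ga\Psi}\D_k^\h\p_y u_\Phi\|_{L^2_+} \lesssim \|e^\Psi\D_k^\h\p_y G_\Phi\|_{L^2_+} \]
by \eqref{S2eq20}. For the $L^\infty_{\rm v}(L^2_\h)$ bound, I would apply the one-dimensional Gagliardo--Nirenberg inequality $\|f\|^2_{L^\infty_{\rm v} L^2_\h} \leq 2\|f\|_{L^2_+}\|\p_y f\|_{L^2_+}$ to $f = e^{\ga\Psi}\D_k^\h\p_y(y\varphi)_\Phi$; using $\p_y^2(y\varphi) = 2u + y\p_y u$ and the pointwise bound $y\,e^{(\ga-1)\Psi} \lesssim \sqrt{\w{t}}$ yields $\|\p_y f\|_{L^2_+} \lesssim \sqrt{\w{t}}\|e^\Psi\D_k^\h\p_y G_\Phi\|_{L^2_+}$, which combined with the $L^2_+$ bound produces the desired $\w{t}^{-3/4}$ decay through the geometric mean. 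The main obstacle throughout is the bootstrap used in \eqref{S2eq21}: the gap $1-\ga>0$ is precisely what prevents the chain of weighted estimates from becoming circular, and all subsequent inequalities then reduce to this one by differentiation and elementary manipulations.
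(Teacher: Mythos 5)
There is a genuine gap, and it sits at the very first step. Your inversion of the ODE $\p_y\vf+\tfrac{y}{2\w{t}}\vf=G$ from $y=+\infty$ is not legitimate: the homogeneous solution $Ce^{-2\Psi}=Ce^{-y^2/(4\w{t})}$ itself decays (rapidly) as $y\to\infty$, so requiring decay at infinity does \emph{not} eliminate the homogeneous mode; moreover the integral $\int_y^\infty e^{2\Psi}G\,dy'$ has no reason to converge, since $e^{2\Psi}=e^{y^2/(4\w{t})}$ grows super\-exponentially while the functional framework only gives $e^{\Psi}G\in L^2$. The correct normalization is the boundary condition $\vf(t,x,0)=0$, which must be \emph{established} before anything else: the paper deduces $\int_0^\infty u\,dy=0$ from $\p_xu+\p_yv=0$, $v|_{y=0}=0$ and the decay of $u$ in $x$, hence $\vf|_{y=0}=0$, and then inverts the ODE as $\vf=e^{-2\Psi}\int_0^y e^{2\Psi}G\,dy'$; all three estimates then follow from Cauchy--Schwarz in $y'$ together with the elementary bound $\sup_{y}\bigl(e^{-y^2}\int_0^ye^{z^2}\,dz\bigr)<\infty$ and $\sup_y y\,e^{-(1-\ga)\Psi}\lesssim\w{t}^{1/2}$. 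Your proof never uses, or even mentions, the normalization $\vf|_{y=0}=0$ --- and the lemma is simply false without it.

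This is not a repairable detail: your bootstrap for \eqref{S2eq21} provably cannot close. Take $G\equiv 0$ and $\vf=Ce^{-2\Psi}$, so $u=\p_y\vf=-\tfrac{Cy}{2\w{t}}e^{-2\Psi}\neq 0$. Your energy identity (whose boundary term at $y=0$ vanishes because of the factor $y$, irrespective of $\vf(0)$) is satisfied by this pair and reads $\|e^{\ga\Psi}u\|^2+\tfrac{1-\ga}{4\w{t}^2}\|ye^{\ga\Psi}\vf\|^2=\tfrac{1}{2\w{t}}\|e^{\ga\Psi}\vf\|^2$, showing that the right-hand term is as large as the entire left-hand side and cannot be absorbed by it; yet the conclusion $\|e^{\ga\Psi}u\|\lesssim\|e^{\Psi}G\|=0$ fails. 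Quantitatively, the weighted Poincar\'e step gives $\tfrac{1}{2\w{t}}\|e^{\ga\Psi}\vf\|^2\le \tfrac{1}{2\e}\|e^{\ga\Psi}u\|^2+\tfrac{2\e-\ga}{4\w{t}^2}\|ye^{\ga\Psi}\vf\|^2$, and absorption would require $\e>\tfrac12$ and $\e<\tfrac12$ simultaneously. Since \eqref{S2eq20} and \eqref{S7eq21} in your write-up are reduced to \eqref{S2eq21} (your reduction $\tfrac{1}{2\w{t}}\p_y(y\vf)=\p_yG-\p_yu$ and the Gagliardo--Nirenberg step for the $L^\infty_{\rm v}(L^2_\h)$ norm are fine in themselves), the whole chain collapses. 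You need to start from $\vf|_{y=0}=0$ and the representation $\vf=e^{-2\Psi}\int_0^ye^{2\Psi}G\,dy'$.
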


Let us postpone the proof of this lemma till the end of this section.

\smallskip

We are now in a position to complete the proof of Theorem
\ref{th1.3}.\smallskip

\begin{proof}[Proof of Theorem \ref{th1.3}]
The general strategy to prove the existence result for a nonlinear
partial differential equation is first to construct  appropriate
approximate solutions, then perform uniform estimates for such
approximate solution sequence, and finally pass to the limit in the
approximate problem. For simplicity, here we only present the {\it a
priori} estimates for smooth enough solutions of \eqref{S1eq5} in the
analytical framework.

Indeed let $u$ and $\vf$ be smooth enough solutions of \eqref{S1eq5} and \eqref{S1eq2} respectively on $[0,T^\star),$
where $T^\star$ is the maximal time of existence of the solutions.
Let $G$ be defined by \eqref{S7eq4}. For any
$t<T^\ast$ (of course here $T^\ast\leq T^\star$) with $T^\ast$ being defined by \eqref{1.8a},  we deduce from \eqref{1.9} that
\beno
\tht(t)\leq  \int_0^t\w{t'}^\f14\bigl(\|e^\Psi\p_yG^s(t')\|_{L^2_{\rm v}}+\|e^\Psi
\p_yG_\Phi(t')\|_{\cB^{\f{1}2,0}}\bigr)\,dt'+\e\int_0^t\|e^{\Psi(t')}\chi'\|_{L^2_{\rm v}}
\w{t'}^{\f14}f(t')\,dt'.
\eeno
Notice that $\Supp\chi'\subset [1,2],$ one has
\beno
\|e^{\Psi(t')}\chi'\|_{L^2_{\rm v}}\leq e^{\f1{2\w{t'}}}\|\chi'\|_{L^2_{\rm v}}\leq e^{\f12}\|\chi'\|_{L^2_{\rm v}},
\eeno
from which, Proposition \ref{prop2.1} and Proposition \ref{S7prop2}, we infer
\beq \label{S5eq9}
\tht(t)\leq C\bigl(\|e^{\f{y^2}8} e^{\delta|D_x|}G_0\|_{\cB^{\f{1}2,0}}+\e \cC_{f}\bigr)\quad\mbox{for}\quad t<T^\ast,
\eeq
where the constant $ \cC_{f}$ is  determined   by  \eqref{S2eq12}.

In particular, if we take $c_0$ in \eqref{S1eq6} and $\e_0$ so small that
\beq \label{S5eq10}
C\left(c_0+\e_0 \cC_{f}\right)\leq \f{\delta}{2\lambda}.
\eeq
Then we deduce from \eqref{S5eq9} that
\beno
\sup_{t\in [0,T^\ast)}\tht(t)\leq \frac{\delta}{2\lambda}\quad\mbox{for}
\quad \e\leq \e_0.
\eeno
So that in view of \eqref{1.8a}, we get by a continuous argument that
$T^\ast=\infty.$   And  Propositions \ref{S5prop2} and \ref{S7prop2} ensure the first two inequalities
 of \eqref{eq10}. Moreover, \eqref{1.19} holds for $t=\infty,$  which implies
\eqref{eq7}. Finally Proposition  \ref{S7prop2} and Lemma \ref{S0lem1} ensures  the last inequality of \eqref{eq10}.
This completes the existence part of Theorem
\ref{th1.3}. The uniqueness part follows from Theorem 1.1 of \cite{ZHZ}. This concludes the proof of Theorem
\ref{th1.3}.
\end{proof}

Let us end this section with the proof of Lemma \ref{S0lem1}.

\begin{proof}[Poor of Lemma \ref{S0lem1}] As a matter of fact, due to $\p_xu+\p_yv=0$ and
$v(t,x,0)=0,$ we find
\beno
\p_x\int_0^\infty u(t,x,y)\,dy=-\int_0^\infty \p_yv(t,x,y)\,dy=v(t,x,0)=0,
\eeno
which implies $\int_0^\infty u(t,x,y)\,dy=C(t).$
 Yet since $u$ decays to zero as $|x|$ tends to $\infty,$ we have $C(t)=0,$
that is
\beq \label{S2eq15}
\int_0^\infty u(t,x,y)\,dy=0.
\eeq
Due to $u=\p_y\vf,$ we deduce that
\beq \label{S2eq16}
\vf(t,x,0)=-\int_0^\infty u(t,x,y)\,dy=0.
\eeq
Thanks to \eqref{S2eq16}, we deduce from \eqref{S7eq4} and $u=\p_y\vf$ that
\beq \label{S2eq17}
\vf(t,x,y)=e^{-\f{y^2}{4\w{t}}}\int_0^ye^{\f{(y')^2}{4\w{t}}}G(t,x,y')\,dy',
\eeq
which implies
\beq \label{S2eq18}
u=\p_y\vf=-\f{y}{2\w{t}}e^{-\f{y^2}{4\w{t}}}\int_0^ye^{\f{(y')^2}{4\w{t}}}G(t,x,y')\,dy'+G,
\eeq
and
\beq\label{S2eq19}
\begin{split}
\p_yu=\p_y^2\vf=&-\f{y}{2\w{t}}G+\p_yG(t,y)\\
&+\Bigl(-\f{1}{2\w{t}}+\f{y^2}{4\w{t}^2}\Bigr)e^{-\f{y^2}{4\w{t}}}\int_0^ye^{\f{(y')^2}{4\w{t}}}G(t,x,y')\,dy'.
\end{split}
\eeq

In view of \eqref{eq2.7}, \eqref{S2eq18} and
\beq
\label{S2eq19p} \sup_{y\in [0,\infty)}\bigl(e^{-y^2}\int_0^y e^{z^2}\,dz\bigr)<\infty,\eeq we infer that
\beno
\begin{split}
\bigl\|e^{\ga\Psi}\D_k^\h u_\Phi(t)\bigr\|_{L^2_+}\lesssim &\bigl\|e^{\Psi}\D_k^\h G_\Phi(t)\bigr\|_{L^2_+}\\
&+\w{t}^{-1}\bigl\|ye^{(\ga-2)\Psi}\Bigl(\int_0^ye^{2\Psi}\,dy'\Bigr)^{\f12}
\Bigl(\int_0^\infty |e^\Psi\D_k^\h G|^2\,dy\Bigr)^{\f12}\bigr\|_{L^2_+}\\
\lesssim & \bigl\|e^{\Psi}\D_k^\h G_\Phi(t)\bigr\|_{L^2_+}+\w{t}^{-\f34}\bigl\|ye^{-(1-\ga)\Psi}
\Bigl(\int_0^\infty |e^\Psi\D_k^\h G|^2\,dy\Bigr)^{\f12}\bigr\|_{L^2_+},
\end{split}
\eeno
from which and $\ga\in (0,1),$ we deduce \eqref{S2eq21}.

Whereas due to  $\lim_{y\to \infty}G(t,x,y)=0,$ we write $G=-\int_y^\infty \p_yG\,dy',$ and hence, for $\ga\in (0,1),$ we infer
\beno
\begin{split}
\|\D_k^\h G_\Phi(t,\cdot,y)\|_{L^2_\h}\leq& \Bigl(\int_y^\infty e^{-2\Psi}\,dy'\Bigr)^{\f12}\Bigl(\int_0^\infty |e^{\Psi} \D_k^\h\p_yG_\Phi|^2\,dy\Bigr)^{\f12}\\
\lesssim &\w{t}^{\f14} e^{-\f{1+\ga}2\Psi}\Bigl(\int_0^\infty |e^{\Psi} \D_k^\h\p_yG_\Phi|^2\,dy\Bigr)^{\f12},
\end{split}
\eeno
from which and \eqref{S2eq19}, we infer
\beq \label{S2eq20op}
\begin{split}
\bigl\|e^{\ga\Psi}\D_k^\h\p_yu_\Phi(t)\bigr\|_{L^2_+}
\lesssim &  \w{t}^{-\f34}\bigl\|e^{(\ga-2)\Psi}\int_0^ye^{\f{3-\ga}2\Psi}\,dy'\Bigl(\int_0^\infty |e^{\Psi} \D_k^\h\p_yG_\Phi|^2\,dy\Bigr)^{\f12}\bigr\|_{L^2_+}\\
&+\w{t}^{-\f74}\bigl\|y^2e^{(\ga-2)\Psi}\int_0^ye^{\f{3-\ga}2\psi}\,dy'\Bigl(\int_0^\infty |e^{\Psi} \D_k^\h\p_yG_\Phi|^2\,dy\Bigr)^{\f12}\bigr\|_{L^2_+}\\
&+\w{t}^{-\f34}\bigl\|ye^{\f{\ga-1}2\Psi}\Bigl(\int_0^\infty |e^{\Psi} \D_k^\h\p_yG_\Phi|^2\,dy\Bigr)^{\f12}\bigr\|_{L^2}\\
&\qquad\qquad\qquad\qquad\qquad\qquad\qquad\qquad+\bigl\|e^{\Psi}\D_k^\h\p_yG_\Phi(t)\bigr\|_{L^2_+}.
\end{split}
\eeq
\eqref{S2eq20op} together with the fact that
\beq \label{S2eq20yp}
\sup_{y\in\R^+}\bigl(e^{-\f{3-\ga}2\Psi}\int_0^ye^{\f{3-\ga}2\Psi}\,dy'\bigr)\leq C\w{t}^{\f12},
\eeq
implies \eqref{S2eq20}.

Finally, let us turn to the proof of \eqref{S7eq21}. We first observe from \eqref{S2eq17} that
\beq\label{S7eq20}
\p_y(y\vf)=\Bigl(1-\f{y^2}{2\w{t}}\Bigr)\vf+yG.
\eeq
Then along the same line to the proof of \eqref{S2eq20op}, we deduce that
\beno
\begin{split}
\bigl\|&e^{\ga\Psi}\bigl(1-\f{1}{2}\w{t}^{-1}y^2\bigr)\D_k^\h\vf_\Phi(t)\bigr\|_{L^2_+}\\
=&\bigl\|e^{(\ga-2)\Psi}\bigl(1-\f{1}{2}\w{t}^{-1}y^2\bigr)\int_0^ye^{2\Psi}\int_{y'}^\infty\D_k^\h\p_yG_\Phi\,dz\,dy'\bigr\|_{L^2_+}\\
\leq &\bigl\|e^{(\ga-2)\Psi}\bigl(1-\f{1}{2}\w{t}^{-1}y^2\bigr)\int_0^ye^{2\Psi}\Bigl(\int_{y'}^\infty e^{-2\Psi}\,dz\Bigr)^{\f12}\Bigl(\int_0^\infty
|e^\Psi\D_k^\h\p_yG_\Phi|^2\,dy\Bigr)^{\f12}\,dy'\bigr\|_{L^2_+}\\
\lesssim &\w{t}^{\f14}\bigl\|e^{(\ga-2)\Psi}\bigl(1-\f{1}{2}\w{t}^{-1}y^2\bigr)\int_0^ye^{\f{3-\ga}2\Psi}\Bigl(\int_0^\infty
|e^\Psi\D_k^\h\p_yG_\Phi|^2\,dy\Bigr)^{\f12}\,dy'\bigr\|_{L^2_+}\\
\lesssim &\w{t}\|e^\Psi\D_k^\h \p_yG_\Phi\|_{L^2_+}.
\end{split}
\eeno
While a direct computation ensures that
\beno
\begin{split}
\bigl\|e^{\ga\Psi}y\D_k^\h G_\Phi(t)\bigr\|_{L^2_+}\lesssim &  \bigl\|e^{\ga\Psi} y\Bigl(\int_0^ye^{-2\Psi}\,dy'\Bigr)^{\f12}\Bigl(\int_0^\infty |e^{\Psi} \D_k^\h\p_yG_\Phi|^2\,dy\Bigr)^{\f12}\bigr\|_{L^2_+}\\
\lesssim &\w{t}^{\f14}\bigl\|y e^{\f{\ga-1}2\Psi}\Bigl(\int_0^\infty |e^{\Psi} \D_k^\h\p_yG_\Phi|^2\,dy\Bigr)^{\f12}\bigr\|_{L^2_+}\\
\lesssim &\w{t}\|e^\Psi\D_k^\h \p_yG_\Phi\|_{L^2_+}.
\end{split}
\eeno
This along  with \eqref{S7eq20} ensures that
\beq \label{S7eq21yu}
\|e^{\ga\Psi}\D_k^\h\p_y(y\vf)_\Phi(t)\|_{L^2_+}
\lesssim \w{t}\|e^\Psi\D_k^\h \p_yG_\Phi\|_{L^2_+}.
\eeq

By exactly the same procedure as  that in the proof of \eqref{S7eq21yu}, we find
\beno
\|e^{\ga\Psi}\D_k^\h\p_y(y\vf)_\Phi(t)\|_{L^\infty_{\rm v}(L^2_\h)}
\lesssim \w{t}^{\f34}\|e^\Psi\D_k^\h \p_yG_\Phi\|_{L^2_+}.
\eeno
This together with \eqref{S7eq21yu}  ensures \eqref{S7eq21}. We thus conclude  the proof of  Lemma \ref{S0lem1}.
\end{proof}

\smallskip

\renewcommand{\theequation}{\thesection.\arabic{equation}}
\setcounter{equation}{0}
\section{The decay-in-time energy  estimate of $G^s$}\label{sect1a}

The goal of this section is to present the proof of Proposition \ref{prop2.1}.  Especially, we are going
to prove in the classical weighted energy space that $G^s$ determined by \eqref{S2eq7} decays faster than $u^s,$ with
the decay rate being given by \eqref{Soeq1}.
We start the proof of Proposition \ref{prop2.1} by the following lemma:

\begin{lem}\label{lem2.2}
{\sl Let  $G^s(t,y)$ and  $\Psi(t,y)$ be defined  respectively by  \eqref{S2eq4}  and \eqref{eq2.7}.
  Then  for any $t>0,$ one has
\beq \label{S2eq5}
\bigl\|\w{t'}^{\f54}e^\Psi G^s\bigr\|_{L^\infty(\R^+; L^2_{\rm v})}\leq C\e\|\w{t'}^{\f54}H\|_{L^1(\R_+;L^2_{\rm v})},
\eeq
and
\beq \label{S2eq6}
\begin{split}
\int_{\f{t}2}^t\bigl\|\w{t'}^{\f54}e^\Psi\p_y G^s(t')\bigr\|_{L^2_{\rm v}}^2\,dt'\lesssim & \e^2\bigl(\|\w{t'}^{\f54}H\|_{L^1(\R_+;L^2_{\rm v})}^2+\|\w{t'}^{\f74} H\|_{L^2(\R_+;L^2_{\rm v})}^2\bigr),
\end{split}
\eeq for $H$ given by \eqref{S2eq7}.
}
\end{lem}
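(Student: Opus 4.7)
The plan is to perform a weighted $L^2_{\rm v}$ energy estimate on \eqref{S2eq7} using the exponential weight $e^{2\Psi}$. Testing the $G^s$ equation against $e^{2\Psi}G^s$, integrating by parts twice in $y$ (the boundary terms at $y=0$ vanish because $G^s|_{y=0}=0$ and $\p_y\Psi(t,0)=0$; those at $y=\infty$ vanish from the Gaussian factor), and using the two algebraic identities $\p_t\Psi+2(\p_y\Psi)^2=0$ from \eqref{eq2.8} together with $\p_y^2\Psi=(4\w{t})^{-1}$, I obtain the pointwise-in-time identity
\[
\f12\f{d}{dt}\|e^\Psi G^s\|_{L^2_{\rm v}}^2+\|e^\Psi\p_yG^s\|_{L^2_{\rm v}}^2+\f{3}{4\w{t}}\|e^\Psi G^s\|_{L^2_{\rm v}}^2=\e\bigl(H\bigm| e^{2\Psi}G^s\bigr)_{L^2_{\rm v}}.
\]
Since $m$ and $M$ are supported in $y\in[0,2]$, so is $H$, and $e^\Psi\leq e^{1/2}$ on that support; hence the right-hand side is bounded by $C\e\|H\|_{L^2_{\rm v}}\|e^\Psi G^s\|_{L^2_{\rm v}}$.

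To prove \eqref{S2eq5}, I invoke the Poincar\'e inequality \eqref{S2eq1} of Lemma \ref{lem2.1} applied with $d=0$, which gives $\|e^\Psi\p_yG^s\|_{L^2_{\rm v}}^2\geq (2\w{t})^{-1}\|e^\Psi G^s\|_{L^2_{\rm v}}^2$. Combined with the explicit $\f{3}{4\w{t}}$-term, this produces an effective decay rate of $\f{5}{4\w{t}}$. Dividing the resulting inequality by $\|e^\Psi G^s\|_{L^2_{\rm v}}$ yields
\[
\f{d}{dt}\|e^\Psi G^s\|_{L^2_{\rm v}}+\f{5}{4\w{t}}\|e^\Psi G^s\|_{L^2_{\rm v}}\leq C\e\|H\|_{L^2_{\rm v}},
\]
which, on multiplying by the integrating factor $\w{t}^{5/4}$ and integrating from $0$ to $t$ using $G^s(0)=0$, gives \eqref{S2eq5}.

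To prove \eqref{S2eq6}, I instead keep the dissipation and multiply the basic identity by $\w{t}^{5/2}$. Using $\f12\w{t}^{5/2}\f{d}{dt}\|e^\Psi G^s\|^2=\f12\f{d}{dt}(\w{t}^{5/2}\|e^\Psi G^s\|^2)-\f54\w{t}^{3/2}\|e^\Psi G^s\|^2$, the identity rearranges to
\[
\f12\f{d}{dt}\bigl(\w{t}^{5/2}\|e^\Psi G^s\|_{L^2_{\rm v}}^2\bigr)+\w{t}^{5/2}\|e^\Psi\p_yG^s\|_{L^2_{\rm v}}^2\leq\f12\w{t}^{3/2}\|e^\Psi G^s\|_{L^2_{\rm v}}^2+\e\w{t}^{5/2}\|H\|_{L^2_{\rm v}}\|e^\Psi G^s\|_{L^2_{\rm v}}.
\]
Integrating over $[t/2,t]$, the boundary term at $t/2$ is bounded by $C\e^2\|\w{t'}^{5/4}H\|_{L^1(L^2_{\rm v})}^2$ by \eqref{S2eq5}, and the residual integral $\int_{t/2}^t\w{t'}^{3/2}\|e^\Psi G^s\|^2\,dt'$ is controlled using \eqref{S2eq5} and $\int_{t/2}^t\w{t'}^{-1}\,dt'\leq\log 2$. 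For the source cross-term, I split $\w{t'}^{5/2}=\w{t'}^{7/4}\cdot\w{t'}^{3/4}$ and apply Cauchy--Schwarz to produce $\|\w{t'}^{7/4}H\|_{L^2(t/2,t;L^2_{\rm v})}\cdot\|\w{t'}^{3/4}e^\Psi G^s\|_{L^2(t/2,t;L^2_{\rm v})}$; the second factor is again $\lesssim\e\|\w{t'}^{5/4}H\|_{L^1(L^2_{\rm v})}$ by \eqref{S2eq5}, and Young's inequality yields \eqref{S2eq6}.

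The main technical subtlety is the tight book-keeping at the exponent $5/2$: the term $-\f54\w{t}^{3/2}\|e^\Psi G^s\|^2$ arising from differentiating the time weight has exactly the same order as the Poincar\'e contribution $\f12\w{t}^{3/2}\|e^\Psi G^s\|^2$ available from $\w{t}^{5/2}\|e^\Psi\p_yG^s\|^2$, so attempting to absorb it via Poincar\'e at this level would consume the entire dissipation. The detour through \eqref{S2eq5} is what allows us to treat this residual as a known $L^1$-in-time source and still preserve the full weighted dissipation on the left side.
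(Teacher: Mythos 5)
Your proof is correct and follows essentially the same route as the paper: the weighted energy identity for \eqref{S2eq7} with the weight $e^{2\Psi}$, the Poincar\'e inequality of Lemma \ref{lem2.1} to upgrade the damping to $\f{5}{4\w{t}}$ and obtain \eqref{S2eq5} via the integrating factor $\w{t}^{5/4}$, and then multiplication by $\w{t}^{5/2}$ and integration over $[t/2,t]$, using \eqref{S2eq5} to absorb the residual terms, to obtain \eqref{S2eq6}. The only cosmetic differences are that you integrate by parts twice to get an exact identity (using $\p_y^2\Psi=(4\w{t})^{-1}$) where the paper uses Young's inequality after one integration by parts, and you handle the source term by a $\w{t}^{7/4}\cdot\w{t}^{3/4}$ Cauchy--Schwarz split rather than the paper's pointwise Young inequality; both yield the same right-hand side.
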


\begin{proof}
By taking $L^2_{\rm v}$ inner product  of  the $G^s$ equation  of \eqref{S2eq7} with $e^{2\Psi}G^s,$ we obtain
\beno
\left(\p_tG^s | e^{2\Psi}G^s\right)_{L^2_{\rm v}}-\left(\p_y^2G^s | e^{2\Psi}G^s\right)_{L^2_{\rm v}}+\w{t}^{-1}\bigl\|e^\Psi G^s(t)\bigr\|_{L^2_{\rm v}}^2=\e \left( H | e^{2\Psi} G^s\right)_{L^2_{\rm v}}.
\eeno
It is easy to observe that
\beno
\left(\p_tG^s | e^{2\Psi}G^s\right)_{L^2_{\rm v}}=\f12\f{d}{dt}\bigl\|e^\Psi G^s(t)\bigr\|_{L^2_{\rm v}}^2
-\int_{\R^+}e^{2\Psi}\p_t\Psi|G^s|^2\,dy.
\eeno
Due to $G^s|_{y=0}=0,$ we get, by using integration by parts and Young's inequality, that
\beno
\begin{split}
-\left(\p_y^2G^s | e^{2\Psi}G^s\right)_{L^2_{\rm v}}=&\bigl\|e^\Psi\p_y G^s\bigr\|_{L^2_{\rm v}}^2+2\int_{\R^+}e^{2\Psi}\p_y\Psi\p_yG^s G^s\,dy\\
\geq &\f12\bigl\|e^\Psi\p_yG^s\bigr\|_{L^2_{\rm v}}^2-2\int_{\R^+}e^{2\Psi}(\p_y\Psi)^2|G^s|^2\,dy.
\end{split}
\eeno
As a result, thanks to \eqref{eq2.8}, we obtain
\beq \label{S2eq8}
\f12\f{d}{dt}\bigl\|e^\Psi G^s(t)\bigr\|_{L^2_{\rm v}}^2+\f12\bigl\|e^\Psi\p_y G^s(t)\bigr\|_{L^2_{\rm v}}^2 +\w{t}^{-1}\bigl\|e^\Psi G^s(t)\bigr\|_{L^2_{\rm v}}^2
\leq
\e \bigl\|e^\Psi G^s(t)\|_{L^2_{\rm v}}\bigl\|e^\Psi H(t)\bigr\|_{L^2_{\rm v}}.
\eeq
Applying Lemma \ref{lem2.1} for $d=0$ yields
\beno
\bigl\|e^\Psi\p_y G^s(t)\bigr\|_{L^2_{\rm v}}^2\geq \f1{2\w{t}}\bigl\|e^\Psi G^s(t)\bigr\|_{L^2_{\rm v}}^2,
\eeno
so that we deduce from \eqref{S2eq8} that
\beno
\f12\f{d}{dt}\bigl\|e^\Psi G^s(t)\bigr\|_{L^2_{\rm v}}^2+\f5{4\w{t}}\bigl\|e^\Psi G^s(t)\bigr\|_{L^2_{\rm v}}^2\leq
\e \bigl\|e^\Psi G^s(t)\|_{L^2_{\rm v}}\bigl\|e^\Psi H(t)\bigr\|_{L^2_{\rm v}},
\eeno
which implies
\beno
\f{d}{dt}\bigl\|e^\Psi G^s(t)\bigr\|_{L^2_{\rm v}}+\f5{4\w{t}}\bigl\|e^\Psi G^s(t)\bigr\|_{L^2_{\rm v}}\leq
\e \bigl\|e^\Psi H(t)\|_{L^2_{\rm v}},
\eeno
and
\beno
\f{d}{dt}\left(\w{t}^{\f54}\bigl\|e^\Psi G^s(t)\bigr\|_{L^2_{\rm v}}\right)\leq
\e \w{t}^{\f54}\bigl\|e^\Psi H(t)\|_{L^2_{\rm v}}.
\eeno
Integrating the above inequality over $[0,t]$ gives rise to \eqref{S2eq5}.

On the other hand, we deduce from \eqref{S2eq8} and Young's inequality that
\beq \label{S2eq89}
\begin{split}
\f{d}{dt}\bigl\|e^\Psi G^s(t)\bigr\|_{L^2_{\rm v}}^2+\bigl\|e^\Psi\p_y G^s(t)\bigr\|_{L^2_{\rm v}}^2+&2\w{t}^{-1}\bigl\|e^\Psi G^s(t)\bigr\|_{L^2_{\rm v}}^2\\
\leq &
2\e\w{t}^{\f12}\bigl\|e^\Psi H(t)\|_{L^2_{\rm v}}\w{t}^{-\f12}\bigl\|e^\Psi G^s(t)\bigr\|_{L^2_{\rm v}}\\
\leq &
\e^2\w{t}\bigl\|e^\Psi H(t)\|_{L^2_{\rm v}}^2+\w{t}^{-1}\bigl\|e^\Psi G^s(t)\bigr\|_{L^2_{\rm v}}^2.
\end{split}
\eeq
Multiplying the above inequality by $\w{t}^{\f52}$ and then integrating the resulting inequality over $[t/2,t],$
we obtain
\beno
\begin{split}
\int_{\f{t}2}^t\bigl\|\w{t'}^{\f54}e^\Psi\p_y G^s(t')\bigr\|_{L^2_{\rm v}}^2\,dt'\leq &
\bigl\|\w{t/2}^{\f54}e^\Psi G^s(t/2)\bigr\|_{L^2_{\rm v}}^2\\
&+\f52\int_{\f{t}2}^t\w{t'}^{\f32}\bigl\|e^\Psi G^s(t')\bigr\|_{L^2_{\rm v}}^2\,dt'
+\e^2\int_{\f{t}2}^t\w{t'}^{\f72}\bigl\|e^\Psi H(t')\|_{L^2_{\rm v}}^2\,dt'\\
\leq &\max_{t'\in[0,t]}\bigl\|\w{t'}^{\f54}e^\Psi G^s(t')\bigr\|_{L^2_{\rm v}}^2\bigl(1+\f{5\ln2}2\bigr)
+\e^2\bigl\|\w{t'}^{\f74}e^\Psi H\bigr\|_{L^2_t(L^2_{\rm v})}^2.
\end{split}
\eeno
Inserting \eqref{S2eq5} into the above inequality leads to \eqref{S2eq6}.
This finishes the proof of Lemma \ref{lem2.2}. \end{proof}

\begin{rmk} By integrating \eqref{S2eq89} over $[0,t],$ we obtain
\beq\label{Gse}
\bigl\|e^\Psi\p_y G^s\bigr\|_{L^2_t(L^2_{\rm v})}^2
\leq
\e^2\int_0^\infty\w{t}\bigl\|e^\Psi H(t)\|_{L^2_{\rm v}}^2\,dt.
\eeq
\end{rmk}

Let us now present the proof of Proposition \ref{prop2.1}.

\begin{proof}[Proof of Proposition \ref{prop2.1}]  In view of \eqref{S1eq3} and \eqref{S2eq3}, both $m$ and $M$
are supported in $[0,2]$ for any $t\geq 0,$ so that  we  observe from \eqref{S2eq7} that
\beno
\begin{split} \|\w{t}^{\f54}H\|_{L^1(\R_+;L^2_{\rm v})}\lesssim
&\|\w{t}^{\f14} yM\|_{L^1(\R_+;L^2_{\rm v})}+\|\w{t}^{\f54}m\|_{L^1(\R_+;L^2_{\rm v})}\\
\leq & C\int_0^\infty\w{t}^{\f54}\left(|f(t)|+|f'(t)|\right)\,dt\leq CC_f,
\end{split}
\eeno
and
\beno
\begin{split}
\|\w{t}^{\f74} H\|_{L^2(\R_+;L^2_{\rm v})}\lesssim
&\|\w{t}^{\f34} yM\|_{L^1(\R_+;L^2_{\rm v})}+\|\w{t}^{\f74}m\|_{L^1(\R_+;L^2_{\rm v})}\\
\leq & C\Bigl(\int_0^\infty\w{t}^{\f72}\left(f^2(t)+(f'(t))^2\right)\,dt\Bigr)^{\f12}\leq CC_f,
\end{split}
\eeno
for $\cC_{f}$ given by \eqref{S2eq12}. Hence, for any $t>0,$ we deduce from \eqref{S2eq6}  and \eqref{Gse} that
\beq\label{S2eq10} \bigl\|e^\Psi\p_y G^s\bigr\|_{L^2_t(L^2_{\rm v})}^2+
\int_{\f{t}2}^t\bigl\|\w{t'}^{\f54}e^\Psi\p_y G^s(t')\bigr\|_{L^2_{\rm v}}^2\,dt'\leq CC_f^2\e^2.
\eeq

While for any $t>1,$ we fix an integer $N_t$ so that $2^{N_t-1}\leq t<2^{N_t},$ which implies $t/2<2^{N_t-1}.$ Then we deduce from \eqref{S2eq10} that
\beno
\begin{split}
\int_{2^{N_t-1}}^t\w{t'}^{\f14}\|e^\Psi\p_y G^s(t')\|_{L^2_{\rm v}}\,dt'\leq &\Bigl(\int_{2^{N_t-1}}^t \w{t'}^{-2}\,dt'\Bigr)^{\f12} \Bigl(\int_{t/2}^t \w{t'}^{\f52}\bigl\|e^\Psi\p_y G^s(t')\bigr\|_{L^2_{\rm v}}^2\,dt'\Bigr)^{\f12}\\
\leq &C 2^{-\f{N_t}2}\cC_{f}\e.
\end{split}
\eeno
Along the same line, for any $j\in [0, N_t-2],$ we have
\beno
\int_{2^{j}}^{2^{j+1}}\w{t'}^{\f14}\|e^\Psi\p_yG^s(t')\|_{L^2_{\rm v}}\,dt'\leq C 2^{-\f{j}2}\cC_{f}\e.
\eeno
As a a result,   it comes out
\beno
\begin{split}
\int_{0}^t&\w{t'}^{\f14}\|e^\Psi\p_yG^s(t')\|_{L^2_{\rm v}}\,dt'\leq 2^{\f14}\int_{0}^1\|e^\Psi\p_yG^s(t')\|_{L^2_{\rm v}}\,dt'\\
&+\int_{2^{N_t-1}}^t\w{t'}^{\f14}\|e^\Psi\p_yG^s(t')\|_{L^2_{\rm v}}\,dt'+\sum_{j=0}^{N_t-2}\int_{2^{j}}^{2^{j+1}}\w{t'}^{\f14}\|e^\Psi\p_yG^s(t')\|_{L^2_{\rm v}}\,dt'\\
\leq &C\cC_{f}\e\Bigl(1+\sum_{j=0}^\infty 2^{-\f{j}2}\Bigr)\leq C\cC_{f}\e.
\end{split}
\eeno
 This completes the proof of Proposition \ref{prop2.1}.
\end{proof}

Motivated by the proof of Lemma \ref{S0lem1}, we have the following corollary of Proposition \ref{prop2.1}:

\begin{col}\label{S2col1}
{\sl Let $u^s$  be determined   by \eqref{S2eq7up}.
Then for any $\ga\in (0,1),$  we have
\beq \label{S2eq11}
\int_0^\infty\w{t}^{\f14}\bigl\|e^{\ga\Psi}\p_yu^s(t')\|_{L^2_{\rm v}}\,dt'\leq C\cC_f\e.
\eeq
}
\end{col}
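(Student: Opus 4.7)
The plan is to reduce the claim to Proposition \ref{prop2.1} via the pointwise-in-time inequality
\beq\label{plan:key}
\|e^{\ga\Psi}\p_y u^s(t)\|_{L^2_{\rm v}} \lesssim \|e^\Psi \p_y G^s(t)\|_{L^2_{\rm v}}, \qquad \ga\in(0,1),
\eeq
and then integrate against $\w{t}^{\f14}$. The key point is that the triple $(u^s,\psi^s,G^s)$ plays exactly the role of $(u,\vf,G)$ in Lemma \ref{S0lem1}: indeed $u^s=\p_y\psi^s$ with $\psi^s|_{y=0}=0$ and $\lim_{y\to+\infty}\psi^s=0$ from \eqref{S2eq2}, and $G^s=u^s+\f{y}{2\w{t}}\psi^s$ by definition. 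The only simplification is that every quantity depends on $(t,y)$ alone, so no horizontal Littlewood--Paley machinery or Besov norm enters.

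To derive \eqref{plan:key}, I first solve the first order ODE $\p_y\psi^s+\f{y}{2\w{t}}\psi^s=G^s$ with $\psi^s(t,0)=0$, which yields
$$\psi^s(t,y)=e^{-2\Psi(t,y)}\int_0^y e^{2\Psi(t,y')}G^s(t,y')\,dy',$$
and differentiate to get the scalar analogues of \eqref{S2eq18}--\eqref{S2eq19}, namely
$u^s=G^s-\f{y}{2\w{t}}\psi^s$ and $\p_y u^s=\p_y G^s-\f{y}{2\w{t}}G^s+\bigl(\f{y^2}{4\w{t}^2}-\f{1}{2\w{t}}\bigr)\psi^s$. Since $G^s$ vanishes at $+\infty$, we may also write $G^s(t,y)=-\int_y^\infty\p_y G^s(t,z)\,dz$, so that
$$|G^s(t,y)|\leq \Bigl(\int_y^\infty e^{-2\Psi}\,dz\Bigr)^{\f12}\|e^\Psi\p_y G^s\|_{L^2_{\rm v}}\lesssim \w{t}^{\f14}e^{-\f{1+\ga}{2}\Psi}\|e^\Psi\p_y G^s\|_{L^2_{\rm v}},$$
exactly as in the derivation preceding \eqref{S2eq20op}. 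Plugging these pointwise controls into each of the three terms of $\p_y u^s$, and exploiting \eqref{S2eq19p} and \eqref{S2eq20yp} to absorb the integral factors, gives \eqref{plan:key}; the computation is a scalar-in-$y$ carbon copy of the one leading to \eqref{S2eq20} in Lemma \ref{S0lem1}.

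Finally, multiplying \eqref{plan:key} by $\w{t}^{\f14}$, integrating over $t\in(0,\infty)$ and invoking Proposition \ref{prop2.1} yields
$$\int_0^\infty\w{t}^{\f14}\|e^{\ga\Psi}\p_y u^s(t)\|_{L^2_{\rm v}}\,dt\lesssim \int_0^\infty\w{t}^{\f14}\|e^\Psi\p_y G^s(t)\|_{L^2_{\rm v}}\,dt\leq C\cC_f\e,$$
which is exactly \eqref{S2eq11}. There is no substantial obstacle here: the argument is strictly easier than Lemma \ref{S0lem1} because the $x$-variable and the horizontal dyadic block $\D_k^{\rm h}$ are absent, so all one needs is to transcribe the Gaussian-weight manipulations of Section \ref{sect2}'s end to the scalar setting and then quote Proposition \ref{prop2.1}.
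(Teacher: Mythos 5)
Your proposal is correct and follows essentially the same route as the paper: the paper's proof of Corollary \ref{S2col1} likewise writes $u^s=\p_y\psi^s$ with $\psi^s$ expressed through $G^s$ by the integrating-factor formula, differentiates to get the scalar analogues of \eqref{S2eq18}--\eqref{S2eq19}, bounds $G^s$ via $G^s=-\int_y^\infty\p_yG^s\,dz$, and arrives at the pointwise bound \eqref{S2eq13a}, i.e. $\|e^{\ga\Psi}\p_yu^s(t)\|_{L^2_{\rm v}}\lesssim\|e^{\Psi}\p_y G^s(t)\|_{L^2_{\rm v}}$, before concluding with Proposition \ref{prop2.1}. (Your formula for $\psi^s$ with $e^{+2\Psi}$ inside the integral is the correct one; the sign in \eqref{S2eq7up} is a typo in the paper, as its own subsequent computation confirms.)
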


\begin{proof} In view of \eqref{S2eq7up}, we have
\beno
u^s(t,y)=\p_y\psi^s(t,y)=-\f{y}{2\w{t}}e^{-\f{y^2}{4\w{t}}}\int_0^ye^{\f{(y')^2}{4\w{t}}}G^s(t,y')\,dy'+G^s(t,y)
\eeno
and
\beq\label{S2eq13}
\begin{split}
\p_yu^s(t,y)=\p_y^2\psi^s(t,y)=&\Bigl(-\f{1}{2\w{t}}+\f{y^2}{4\w{t}^2}\Bigr)e^{-\f{y^2}{4\w{t}}}\int_0^ye^{\f{(y')^2}{4\w{t}}}G^s(t,y')\,dy'\\
&-\f{y}{2\w{t}}G^s(t,y)+\p_yG^s(t,y).
\end{split}
\eeq
Since $\lim_{y\to\infty} G^s(t,y)=0,$ we write $G^s(t,y)=-\int_y^\infty \p_yG^s(t,y')\,dy'.$ Then due to $\ga\in (0,1),$
we find
\beno
\begin{split}
|G^s(t,y)|\leq &\Bigl(\int_y^\infty e^{-2\Psi}\,dy'\Bigr)^{\f12}\|e^\Psi \p_yG^s(t)\|_{L^2_{\rm v}}\\
\lesssim & \w{t}^{\f14}e^{-\f{1+\ga}2\Psi}\|e^\Psi \p_yG^s(t)\|_{L^2_{\rm v}},
\end{split}
\eeno
and
\beno
\begin{split}
\bigl|\int_0^ye^{\f{(y')^2}{4\w{t}}}G^s(t,y')\,dy'\bigr|
\lesssim &\w{t}^{\f14}\int_0^ye^{\f{3-\ga}2\Psi}\,dy'\|e^\Psi \p_yG^s(t)\|_{L^2_{\rm v}}.
\end{split}
\eeno
Hence by virtue of \eqref{S2eq13}, we infer
\beno
\begin{split}
\|e^{\ga\Psi}\p_yu^s(t)\|_{L^2_{\rm v}}\lesssim &\Bigl(\w{t}^{-\f34}\bigl\|e^{(\ga-2)\Psi} \int_0^ye^{\f{3-\ga}2\Psi}\,dy'\bigr\|_{L^2_{\rm v}}
+\w{t}^{-\f74}\bigl\|y^2e^{(\ga-2)\Psi} \int_0^ye^{\f{3-\ga}2\Psi}\,dy'\bigr\|_{L^2_{\rm v}}\\
&+\w{t}^{-\f34}\bigl\|ye^{-\f{1-\ga}2\Psi}\bigr\|_{L^2_{\rm v}}+1\Bigr)
\bigl\|e^{\Psi}\p_y G^s(t)\|_{L^2_{\rm v}},
\end{split}
\eeno
which together with \eqref{S2eq20yp} ensures that
\beq \label{S2eq13a}
\begin{split}
\|e^{\ga\Psi}\p_yu^s(t)\|_{L^2_{\rm v}}
\lesssim &\Bigl(\w{t}^{-\f14}\bigl\|e^{-\f{1-\ga}2\Psi} \bigr\|_{L^2_{\rm v}}
+\w{t}^{-\f54}\bigl\|y^2e^{-\f{1-\ga}2\Psi}\bigr\|_{L^2_{\rm v}}\\
&+\w{t}^{-\f34}\bigl\|ye^{-\f{1-\ga}2\Psi}\bigr\|_{L^2_{\rm v}}+1\Bigr)
\bigl\|e^{\Psi}\p_y G^s(t)\|_{L^2_{\rm v}}\\
\lesssim &\bigl\|e^{\Psi}\p_y G^s(t)\|_{L^2_{\rm v}},
\end{split}
\eeq
from which and \eqref{S2eq9}, we conclude the proof of  \eqref{S2eq11}.
\end{proof}
\smallskip

\renewcommand{\theequation}{\thesection.\arabic{equation}}
\setcounter{equation}{0}

\section{Analytic energy estimate to the primitive function of $u$
}\label{Sect4}

The goal of this section is to present the {\it a priori} weighted analytic  energy estimate to the primitive
function  $\vf$ to the solution of \eqref{S1eq5}, namely,
the proof of Proposition \ref{S4prop2}. The key ingredient lies in the following proposition:

\begin{prop}\label{S4prop1}
{\sl Let $\vf$ be a smooth enough solution of \eqref{S1eq2}. Let $\Phi(t,\xi)$ and $\Psi(t,y)$ be given by \eqref{eq2.6} and
\eqref{eq2.7} respectively.   Then for any nonnegative and non-decreasing function $h\in C^1(\R_+),$ there exits a large enough constant $\lambda$ so that
 \beq \label{S4eq13}
\begin{split} &\|\hbar^{\f12}e^\Psi\D_k^{\rm
h}\vf_\Phi\|_{L^\infty(t_0,t;L^2_+)}^2+2c\lam
2^k\int_{t_0}^t\dot{\tht}(t')\|\hbar^{\f12}\D_k^{\rm
h}\vf_\Phi(t')\|_{L^2_+}^2\,dt'\\
&\qquad+\|\hbar^{\f12}e^\Psi\D_k^{\rm
h}\p_y\vf_\Phi\|_{L^2(t_0,t;L^2_+)}^2\leq \|\hbar^{\f12}e^{\Psi}\D_k^{\rm
h}\vf_\Phi(t_0)\|_{L^2_+}^2\\
&+\int_{t_0}^t\hbar'(t')\|e^\Psi\D_k^{\rm
h}\vf_\Phi(t')\|_{L^2_+}^2\,dt'+Cd_k^22^{-k} \|\hbar^{\f12} e^\Psi
\vf_\Phi\|_{\wt{L}^2_{t_0,t;\dot{\tht}(t)}(\cB^{1,0})}^2, \end{split}
\eeq for any $t_0\in  [0,t]$
with $t<T^\ast,$ which is defined by \eqref{1.8a}.  }
\end{prop}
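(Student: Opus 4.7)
My plan is to perform a dyadic analytic energy estimate on the equation for $\vf_\Phi$ obtained from \eqref{S1eq2} by applying the Fourier multiplier $e^{\Phi(t,\xi)}$ in $x$. Since $\p_t e^\Phi=-\lam\dot\tht(t)|\xi|e^\Phi$, that equation reads
\[
\p_t \vf_\Phi+\lam\dot\tht(t)|D_x|\vf_\Phi-\p_y^2\vf_\Phi=-\bigl((u+u^s+\e f\chi)\p_x\vf\bigr)_\Phi-2\Bigl(\int_y^\infty \p_y(u+u^s+\e f\chi)\p_x\vf\,dy'\Bigr)_\Phi.
\]
I then apply $\D_k^{\rm h}$, take the $L^2_+$ inner product with $\hbar(t)e^{2\Psi}\D_k^{\rm h}\vf_\Phi$, and integrate in time over $[t_0,t]$.

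For the linear side, the time derivative yields $\f12\f d{dt}\bigl(\hbar\|e^\Psi\D_k^{\rm h}\vf_\Phi\|_{L^2_+}^2\bigr)-\f{\hbar'}2\|e^\Psi\D_k^{\rm h}\vf_\Phi\|_{L^2_+}^2$ plus a $\p_t\Psi$ contribution. Integrating the diffusion by parts in $y$ (boundary contributions vanish since $\p_y\vf|_{y=0}=0$ and everything decays at infinity) produces $\hbar\|e^\Psi\D_k^{\rm h}\p_y\vf_\Phi\|_{L^2_+}^2$ plus a cross term in $\p_y\Psi$ which Young's inequality controls by $\tfrac12\hbar\|e^\Psi\D_k^{\rm h}\p_y\vf_\Phi\|_{L^2_+}^2+2\hbar\int e^{2\Psi}(\p_y\Psi)^2|\D_k^{\rm h}\vf_\Phi|^2$; by \eqref{eq2.8} the extra $\p_y\Psi$ integral cancels exactly the $\p_t\Psi$ contribution, leaving the favorable $\tfrac{\hbar}{2}\|e^\Psi\D_k^{\rm h}\p_y\vf_\Phi\|_{L^2_+}^2$ on the left. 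Since $|D_x|$ commutes with $e^{2\Psi}$, Plancherel together with the Bernstein lower bound on a $2^k$-localized block turns the $\lam\dot\tht|D_x|$ term into $c\lam\,2^k\dot\tht(t)\hbar\|\D_k^{\rm h}\vf_\Phi\|_{L^2_+}^2$ on the left.

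For the nonlinear contributions, I use Bony's horizontal paraproduct \eqref{Bony} on each product and the convex subadditivity \eqref{1.8bb} (valid since $t<T^\ast$) to transfer $e^\Phi$ onto the individual factors. Standard paraproduct estimates in the anisotropic Besov framework, combined with Lemma \ref{S0lem1} to replace $\|e^\Psi\p_y u_\Phi\|_{\cB^{1/2,0}}$ by the corresponding norm of $e^\Psi\p_yG_\Phi$, yield, after Cauchy--Schwarz in time,
\[
\int_{t_0}^t\Bigl|\bigl(\D_k^{\rm h}((u+u^s+\e f\chi)\p_x\vf)_\Phi\,\big|\,\hbar e^{2\Psi}\D_k^{\rm h}\vf_\Phi\bigr)_{L^2_+}\Bigr|\,dt'\lesssim d_k^2\,2^{-k}\,\|\hbar^{\f12}e^\Psi\vf_\Phi\|^2_{\wt L^2_{t_0,t;\dot\tht}(\cB^{1,0})},
\]
since the three velocity-type quantities $\|e^\Psi\p_yG_\Phi(t)\|_{\cB^{1/2,0}}$, $\|e^\Psi\p_yG^s(t)\|_{L^2_{\rm v}}$ and $\e f(t)\|e^\Psi\chi'\|_{L^2_{\rm v}}$ are precisely, up to the factor $\w{t}^{1/4}$, the summands making up $\dot\tht$ in \eqref{1.9}; the single lost $x$-derivative on $\p_x\vf$ is exactly absorbed by the $2^k$ in the $\dot\tht$-dissipation on the left. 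The non-local piece $2\int_y^\infty\p_y(\cdot)\p_x\vf\,dy'$ is treated in the same spirit, using a Hardy-type bound $e^{-\Psi(y)}\int_y^\infty e^{-\Psi(y')}\,dy'\lesssim\w{t}^{1/2}$ in the style of \eqref{S2eq20yp} to reduce the $y$-integral to a pointwise-in-$y$ quantity before applying the same paraproduct bookkeeping.

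The step I expect to be the main obstacle is the non-local integral in $y$: transferring the anisotropic Gaussian weight $e^{2\Psi}$ through $\int_y^\infty \ldots dy'$ so that the final bound still closes under the $\wt L^2_{t_0,t;\dot\tht}(\cB^{1,0})$ norm of $\vf_\Phi$, without leaking a weight in $\w{t}$ incompatible with the $\w{t}^{1/4}$ prefactor in \eqref{1.9}. Once this technical estimate is in place, integrating the pointwise-in-time dyadic identity from $t_0$ to $t$ directly produces the three LHS quantities of \eqref{S4eq13} together with the initial datum, the $\hbar'$-correction and the claimed nonlinear remainder.
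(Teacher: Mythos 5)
Your overall framework is the paper's: same weighted dyadic energy identity, same treatment of $\p_t-\p_y^2$ with the cancellation \eqref{eq2.8}, same Bernstein lower bound for the $\lam\dot\tht|D_x|$ term, and Bony decomposition plus absorption into the $\dot\tht$-weighted dissipation for $u\p_x\vf$. One small difference: for the $x$-independent convection $(u^s+\e f\chi)\p_x\vf$ the paper does not estimate anything — it vanishes identically by antisymmetry, $\int_{\R}(u^s+\e f\chi)\,\p_x(\D_k^{\rm h}\vf_\Phi)^2\,dx=0$ (see \eqref{S4eq9b}); your paraproduct route would also close, since $\|u^s\|_{L^\infty_{\rm v}}\lesssim\w{t}^{\f14}\|e^\Psi\p_yG^s\|_{L^2_{\rm v}}\lesssim\dot\tht$, but the cancellation is cleaner.

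The step you yourself flag as the obstacle — the non-local term $\int_y^\infty\p_y(\cdot)\p_x\vf\,dy'$ — is genuinely unresolved as written, and the tool you name is insufficient. The bound $e^{-\Psi(y)}\int_y^\infty e^{-\Psi(y')}\,dy'\lesssim\w{t}^{\f12}$ costs a half power of $\w{t}$, whereas the definition \eqref{1.9} of $\dot\tht$ only provides a $\w{t}^{\f14}$ prefactor to absorb such losses; a $\w{t}^{\f12}$ leak cannot be converted into the $\dot\tht$-dissipation and the estimate does not close. The paper's mechanism is different: write $e^{2\Psi(y)}=e^{-\f34\Psi(y)}\cdot e^{\f74\Psi(y)}$, use that $\Psi$ is increasing in $y$ to push $e^{\f74\Psi(y)}\le e^{\f74\Psi(y')}$ inside the $y'$-integral, split $\f74\Psi=\f34\Psi+\Psi$ onto the two factors, and apply Cauchy--Schwarz in $y'$; this yields $e^{\Psi(y)}\bigl|\int_y^\infty\cdots\bigr|\le e^{-\f34\Psi(y)}\|e^{\f34\Psi}\p_yu\|_{L^2_{\rm v}}\|e^{\Psi}\p_x\D_{k'}^{\rm h}\vf_\Phi\|_{L^2_{\rm v}}$, and the only loss is $\|e^{-\f34\Psi}\|_{L^2_{\rm v}}\lesssim\w{t}^{\f14}$ from the outer $y$-integral — exactly the budget in $\dot\tht$ (see \eqref{S4eq9a} and the proof of Lemma \ref{S4lem3}, where $\|e^{\f34\Psi}\p_yu_\Phi\|$ is then controlled by $\|e^{\Psi}\p_yG_\Phi\|$ via \eqref{S2eq20}). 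Without this weight-splitting your argument is a quarter power of $\w{t}$ short.
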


\begin{proof} In view of \eqref{S1eq2} and \eqref{eq2.4}, we write
 \beq \label{S4eq13av}
\begin{split}
\p_t\vf_\Phi-\p_{yy}\vf_\Phi&+\lam\dot{\tht}(t)|D_h|\vf_\Phi+\left[(u+u^s+\e f(t)\chi(y))\p_x \vf
\right]_\Phi\\
&\qquad\qquad+2\int_y^\infty \bigl[\p_y(u+u^s+\e f(t)\chi(y'))\p_x\vf\bigr]_\Phi
\,dy' =0.
\end{split}
\eeq
Here and in all that follows, we shall always denote $|D_\h|$ to be the Fourier multiplier
in the $x$ variable with symbol $|\xi|.$

By applying the dyadic operator
$\D_k^{\rm h}$ to \eqref{S4eq13av} and then taking the $L^2_+$ inner
product of the resulting equation with $\hbar(t)e^{2\Psi}\D_k^{\rm
h}\vf_\Phi,$  we find \beq \label{S4eq2}
\begin{split}
\hbar(t)&\bigl(e^\Psi\D_k^{\rm h}\left(\p_t\vf_\Phi-\p_{yy}\vf_\Phi\right)\ |\ e^\Psi\D_k^{\rm
h}\vf_\Phi\bigr)_{L^2_+}+\lam\dot{\tht}(t)\hbar(t)\bigl(e^\Psi|D_h|\D_k^{\rm
h}\vf_\Phi\ |\ e^\Psi\D_k^{\rm
h}\vf_\Phi\bigr)_{L^2_+}\\
&+\hbar(t)\bigl(e^\Psi\D_k^{\rm h}[(u+u^s+\e f(t)\chi(y))\p_x \vf
]_\Phi\ |\ e^\Psi\D_k^{\rm
h}\vf_\Phi\bigr)_{L^2_+}\\
&+2\hbar(t)\bigl(e^\Psi\int_y^\infty \D_k^{\rm h}\bigl[\p_y(u+u^s+\e f(t)\chi(y'))\p_x\vf\bigr]_\Phi
\,dy' \ |\ e^\Psi\D_k^{\rm
h}\vf_\Phi\bigr)_{L^2_+}=0.
\end{split}
\eeq
 In the rest of this section, we shall always assume that $t<T^\ast$ with
$T^\ast$ being determined by \eqref{1.8a} so that by
virtue of \eqref{eq2.6}, for any $t<T^\ast,$ there holds the
 convex inequality \eqref{1.8bb}.

Then the proof of Proposition \ref{S4prop1} relies on the following lemmas:

\begin{lem}\label{S4lem1}
{\sl Under the assumptions of Proposition \ref{S4prop1}, for any $t_0\in [0,t]$ with $t<T^\ast,$ we have
\beq
\label{S4eq12ad}
\begin{split}
\int_{t_0}^t&\hbar(t')\bigl(e^\Psi\D_k^{\rm h}\left(\p_t\vf_\Phi-\p_{yy}\vf_\Phi\right)\ |\ e^\Psi\D_k^{\rm
h}\vf_\Phi\bigr)_{L^2_+}\,dt'\\
\geq & \f12\Bigl(\|\hbar^{\f12}e^\Psi\D_k^{\rm
h}\vf_\Phi(t)\|_{L^2_+}^2-\|\hbar^{\f12}e^{\Psi}\D_k^{\rm
h}\vf_\Phi(t_0)\|_{L^2_+}^2\\
&\quad-\int_{t_0}^t\hbar'(t')\|e^\Psi\D_k^{\rm
h}\vf_\Phi(t')\|_{L^2_+}^2\,dt'+\|e^\Psi\D_k^{\rm
h}\p_y\vf_\Phi\|_{L^2(t_0,t;L^2_+)}^2\Bigr).
  \end{split} \eeq}
\end{lem}

\begin{lem}\label{S4lem2}
{\sl Under the assumptions of Proposition \ref{S4prop1}, for any $t_0\in [0,t]$ with $t<T^\ast,$ we have
\beq
\label{S4eq7a}
\begin{split}
\int_{t_0}^t\hbar(t')\bigl|\bigl(e^\Psi\D_k^{\rm
h}[(u+u^s+\e f(t')\chi(y))\p_x\vf]_\Phi\ |&\ e^\Psi\D_k^{\rm h}\vf_\Phi\bigr)_{L^2_+}\bigr|\,dt'\\
\lesssim & d_k^22^{-k}\|\hbar^{\f12}e^\Psi
\vf_\Phi\|_{\wt{L}^2_{t_0,t;\dot{\tht}(t)}(\cB^{1,0})}^2.  \end{split} \eeq}
\end{lem}

\begin{lem}\label{S4lem3}
{\sl Under the assumptions of Proposition \ref{S4prop1}, for any $t_0\in [0,t]$ with $t<T^\ast,$ we have
\beq
\label{S4eq9}
\int_{t_0}^t\hbar(t')\bigl|\bigl(e^\Psi\int_y^\infty \D_k^{\rm
h}[\p_yu\p_x\vf]_\Phi
\,dy' \ |\ e^\Psi\D_k^{\rm h}\vf_\Phi\bigr)_{L^2_+}\bigr|\,dt'\lesssim  d_k^22^{-k}\|\hbar^{\f12}e^\Psi
\vf_\Phi\|_{\wt{L}^2_{t_0,t;\dot{\tht}(t)}(\cB^{1,0})}^2. \eeq}
\end{lem}

Let us admit the above lemmas for the time being and continue our proof of Proposition \ref{S4prop1}.

Indeed it follows from Lemma \ref{lem:Bern} that
\beq \label{S4eq40} \lam\dot{\tht}(t)\bigl(e^\Psi|D_h|\D_k^{\rm h}\vf_\Phi\ |\
e^\Psi\D_k^{\rm h}\vf_\Phi\bigr)_{L^2_+}\geq
c\lam\dot{\tht}(t)2^k\|e^\Psi\D_{k}^{\rm h}\vf_\Phi(t)\|_{L^2_+}^2.
\eeq
While it is easy to observe that
\beno
\begin{split}
\int_{t_0}^t&\hbar(t')\bigl|\bigl(e^\Psi\int_y^\infty \D_k^{\rm h}\bigl[\p_y(u^s+\e f(t')\chi(y'))\p_x\vf\bigr]_\Phi
\,dy' \ |\ e^\Psi\D_k^{\rm
h}\vf_\Phi\bigr)_{L^2_+}\bigr|\,dt'\\
&\lesssim \int_{t_0}^t\hbar(t')\int_{\R^2_+}e^{-\f34\Psi}\int_y^\infty e^{\f74\Psi}
|\p_y(u^s+\e f(t')\chi(y'))||\p_x\D_k^{\rm h}\vf_\Phi|
\,dy' |e^\Psi\D_k^{\rm
h}\vf_\Phi|\,dx\,dy\,dt'\\
&\lesssim 2^k\int_{t_0}^t\hbar(t')\|e^{-\f34\Psi}\|_{L^2_{\rm v}}\bigl(\|e^{\f34\Psi}
\p_yu^s\|_{L^2}+\e f(t')\|e^{\Psi}\p_y\chi\|_{L^2_{\rm v}}\bigr)\|e^\Psi\D_k^{\rm h}\vf_\Phi\|_{L^2_+}^2
\,dt',
\end{split}
\eeno
from which,  \eqref{1.9}, \eqref{S2eq13a}  and  Definition \ref{def1.1}, we infer
\beq \label{S4eq9a}
\begin{split}
\int_{t_0}^t\hbar(t')\bigl|\bigl(e^\Psi\int_y^\infty &\D_k^{\rm h}\bigl[\p_y(u^s+\e f(t')\chi(y'))\p_x\vf\bigr]_\Phi
\,dy' \ |\ e^\Psi\D_k^{\rm
h}\vf_\Phi\bigr)_{L^2_+}\bigr|\,dt'\\
&\lesssim 2^k\int_0^t\dot{\theta}(t')\|\hbar^{\frac12}e^\Psi\D_k^{\rm h}\vf_\Phi(t')\|_{L^2_+}^2
\,dt'\lesssim  d_k^22^{-k}\|\hbar^{\f12}e^\Psi
\vf_\Phi\|_{\wt{L}^2_{t_0,t;\dot{\tht}(t)}(\cB^{1,0})}^2.
\end{split}
\eeq

By integrating \eqref{S4eq2} over
$[t_0,t]$ and then
inserting the estimates,  (\ref{S4eq12ad}-\ref{S4eq9a}) into the resulting inequality, we
obtain \eqref{S4eq13}. This completes the proof of Proposition \ref{S4prop1}. \end{proof}

With Proposition \ref{S4prop1}, we now present the proof of Proposition \ref{S4prop2}

\begin{proof}[Proof of Proposition \ref{S4prop2}] we first observe from  \eqref{S2eq1} that
\beno
\begin{split}
\f12&\int_0^t\w{t'}^{-\f12}\|e^\Psi\D_k^{\rm
h}\vf_\Phi\|_{L^2_+}^2\,dt'\leq \int_0^t\|\w{t'}^{\f14}e^\Psi\D_k^{\rm
h}\p_y\vf_\Phi\|_{L^2_+}^2\,dt'.
\end{split}
\eeno
So that by taking $t_0=0$ and $\hbar(t)=\w{t}^{\frac12}$ in \eqref{S4eq13},
we  obtain
\beno
\begin{split} \|\w{t'}^{\f14}e^\Psi\D_k^{\rm
h}\vf_\Phi\|_{L^\infty_t(L^2_+)}^2&+2c\lam
2^k\int_{0}^t\dot{\tht}(t')\|\w{t'}^{\f14}\D_k^{\rm
h}\vf_\Phi(t')\|_{L^2_+}^2\,dt'\\
&\leq \bigl\|e^{\f{y^2}8}e^{\delta|D_x|}\D_k^{\rm
h}\vf_0\bigr\|_{L^2_+}^2+Cd_k^22^{-k} \|\w{t}^{\f14} e^\Psi
\vf_\Phi\|_{\wt{L}^2_{t,\dot{\tht}(t)}(\cB^{1,0})}^2. \end{split}
\eeno
By taking square root of the above inequality and then multiplying
the resulting one by $2^{\f{k}2}$ and finally summing
over $k\in\Z,$ we find for any $t< T^\ast$ \beq
\label{S4eq30}\begin{split} &\|\w{t'}^{\f14}e^\Psi
\vf_\Phi\|_{\wt{L}^\infty_t(\cB^{\f{1}2,0})}+\sqrt{2c\lam}\|\w{t'}^{\f14} e^\Psi
\vf_\Phi\|_{\wt{L}^2_{t,\dot{\tht}(t)}(\cB^{1,0})}\\
&\qquad\qquad\qquad\qquad\qquad\leq \|e^{\f{y^2}8}
e^{\delta|D_x|}\vf_0\|_{\cB^{\f{1}2,0}}+\sqrt{C}\|\w{t'}^{\f14} e^\Psi
\vf_\Phi\|_{\wt{L}^2_{t,\dot{\tht}(t)}(\cB^{1,0})}. \end{split}
\eeq By  taking $\lam$ in \eqref{S4eq30} to be so large  that
$c\lam\geq C,$ we achieve \eqref{S4eq11ag}.

On the other hand, in view of \eqref{S4eq13}, we get, by using a similar derivation of \eqref{S4eq30}, that
\beno
\begin{split}
&\|\hbar^{\f12}e^{\Psi}\vf_\Phi\|_{\wt{L}^\infty(t_0,t;\cB^{\f12,0})}+\sqrt{2c\lam}
\|\hbar^{\f12}e^{\Psi}\vf_\Phi\|_{\wt{L}^2_{t_0,t;\dot{\theta}(t)}(\cB^{1,0})}+
\|\hbar^{\f12}e^{\Psi}\p_y\vf_\Phi\|_{\wt{L}^2(t_0,t;\cB^{\f12,0})}\\
&\qquad\qquad\leq \|\hbar^{\f12}e^{\Psi}\vf_\Phi(t_0)\|_{\cB^{\f12,0}}+\bigl\|\sqrt{\hbar'}e^{\Psi}\vf_\Phi\|_{\wt{L}^2(t_0,t;\cB^{\f12,0})}
+\sqrt{C}\|\hbar^{\f12}e^{\Psi}\vf_\Phi\|_{\wt{L}^2_{t_0,t;\dot{\theta}(t)}(\cB^{1,0})}.
\end{split}
\eeno
Taking $c\lam\geq C$ in the above inequality gives rise to \eqref{S4eq16}.
 This concludes the proof of Proposition \ref{S4prop2}. \end{proof}

Let us end this section by  the proofs of Lemmas \ref{S4lem1}-\ref{S4lem3}.

\begin{proof}[Proof of Lemma \ref{S4lem1}]
We first get, by using integration by parts, that
\beno
\begin{split}
\bigl(e^\Psi\p_t\D_k^{\rm h}\vf_\Phi\ |\ e^\Psi\D_k^{\rm
h}\vf_\Phi\bigr)_{L^2_+}=\bigl(\p_t(e^\Psi\D_k^{\rm h}\vf_\Phi)\ |\
e^\Psi\D_k^{\rm h}\vf_\Phi\bigr)_{L^2_+}-\bigl(\pa_t\Psi e^\Psi\D_k^{\rm
h}\vf_\Phi\ |\ e^\Psi\D_k^{\rm h}\vf_\Phi\bigr)_{L^2_+}.
\end{split}
\eeno
By multiplying the above equality by $\hbar(t)$ and then integrating the resulting one over $[t_0,t],$ we find
\beq\label{S4eq22}
\begin{split}
\int_{t_0}^t&\hbar(t')\bigl(e^\Psi\p_t\D_k^{\rm h}\vf_\Phi\ |\ e^\Psi\D_k^{\rm
h}\vf_\Phi\bigr)_{L^2_+}\,dt'\\
=&\f12\|\hbar^{\f12}e^\Psi\D_k^{\rm
h}\vf_\Phi(t)\|_{L^2_+}^2-\f12\|\hbar^{\f12}e^{\Psi}\D_k^{\rm
h}\vf_\Phi(t_0)\|_{L^2_+}^2\\
&-\f12\int_{t_0}^t\hbar'(t')\|e^\Psi\D_k^{\rm
h}\vf_\Phi(t')\|_{L^2_+}^2\,dt'-\int_{t_0}^t\int_{\R^2_+}\hbar\pa_t\Psi|e^\Psi\D_k^{\rm
h}\vf_\Phi|^2\,dx\,dy\,dt'.
\end{split}
\eeq

Whereas due to $\p_y\vf|_{y=0}=0,$  by using integration by parts and Young's inequality, we achieve
 \beno
\begin{split}
&-\int_{t_0}^t\bigl(e^\Psi\p_{yy}\D_k^{\rm h}\vf_\Phi\ |\ e^\Psi\D_k^{\rm
h}\vf_\Phi\bigr)_{L^2_+}\,dt'\\
&\quad=\|e^\Psi\D_k^{\rm
h}\p_y\vf_\Phi\|_{L^2(t_0,t;L^2_+)}^2+2\int_{t_0}^t\int_{\R^2_+}\pa_y\Psi e^{2\Psi}\D_k^{\rm
h}\vf_\Phi\D_k^{\rm h}\p_y\vf_\Phi\,dx\,dy\,dt'\\
&\quad\geq \f12\|e^\Psi\D_k^{\rm
h}\p_y\vf_\Phi\|_{L^2(t_0,t;L^2_+)}^2-2\int_{t_0}^t\int_{\R^2_+}(\pa_y\Psi)^2|e^\Psi\D_k^{\rm
h} \vf_\Phi|^2\,dx\,dy\,dt',
\end{split}
\eeno
which together with \eqref{eq2.8} and \eqref{S4eq22} ensures \eqref{S4eq12ad}. This finishes the proof of Lemma  \ref{S4lem1}. \end{proof}

\begin{proof}[Proof of Lemma \ref{S4lem2}] By applying Bony's decomposition \eqref{Bony} in the horizontal variable to $u\p_x \vf,$ we write
\beno u\p_x\vf=T^{\rm h}_u\p_x \vf+T^{\rm h}_{\p_x\vf }u+R^{\rm h}(u,\p_x \vf). \eeno
Considering
\eqref{1.8bb} and the support properties to the Fourier transform of
the terms in $T^{\rm h}_u\p_x \vf,$ we write
$$\longformule{ \int_{t_0}^t\hbar(t')\bigl|\bigl(e^\Psi\D_k^{\rm h}\bigl[T^{\rm
h}_u \p_x \vf\bigr]_\Phi\ |\ e^\Psi\D_k^{\rm
h}\vf_\Phi\bigr)_{L^2_+}\bigr|\,dt'}{{}\lesssim \sum_{|k'-k|\leq
4}\int_{t_0}^t\|S_{k'-1}^{\rm
h}u_\Phi(t')\|_{L^\infty_+}\|\hbar^{\f12}e^{\Psi}\D_{k'}^{\rm h}\p_x
\vf_\Phi(t')\|_{L^2_+}\|\hbar^{\f12} e^{\Psi}\D_{k}^{\rm
h}\vf_\Phi(t')\|_{L^2_+}\,dt'.} $$
While it follows from \eqref{S2eq20} and \eqref{S2eq19p} that
\beq \label{S4eq3}
\begin{split}
\|e^{\f34\Psi}\D_k^{\rm h} u_\Phi(t')\|_{L^\infty_{\rm v}(L^2_{\rm h})}\lesssim &\bigl\|e^{\f34\Psi}\int_y^\infty\D_k^\h \p_yu_\Phi(t')\,dy'\bigr\|_{L^\infty_{\rm v}(L^2_{\rm h})}\\
\lesssim & \bigl\|e^{\f34\Psi}\Bigl(\int_y^\infty e^{-\f32\Psi}\,dy\Bigr)^{\f12}\bigr\|_{L^\infty_{\rm
v}}\|e^{\f34\Psi}\D_k^{\rm h}\p_yu_\Phi(t')\|_{L^2_+}\\
\lesssim &d_k(t')2^{-\frac{k}2}\w{t'}^{\f14}\|e^\Psi\p_yG_\Phi(t')\|_{\cB^{\f12,0}},
\end{split}
\eeq where $\bigl\{ d_k(t') \bigr\}_{k\in\Z}$ designates a non-negative generic element in the unit sphere of $\ell^1(\Z)$ for any $t'>0.$
Then we get, by
applying Lemma \ref{lem:Bern}, that \beno
\begin{split}
\|S_{k-1}^{\rm
h}u_\Phi(t')\|_{L^\infty_+}\lesssim &
\sum_{k'\leq k-2} 2^{\f{k}2}\|\D_k^\h
u_\Phi(t')\|_{L^\infty_{\rm v}(L^2_\h)}\\
\lesssim &\w{t'}^{\f14}\|e^\Psi\p_yG_\Phi(t')\|_{\cB^{\f12,0}},
\end{split}
\eeno which together with \eqref{1.9} ensures that \beno \|S_{k'-1}^{\rm h}u_\Phi(t')\|_{L^\infty_+}
\lesssim
 \dot{\tht}(t').  \eeno
Whence  in view of Definition \ref{def1.1}, by applying Lemma \ref{lem:Bern} and
H\"older's inequality, we obtain \beq\label{S4eq4}
\begin{split}
&\int_{t_0}^t\hbar(t')\bigl|\bigl(e^\Psi\D_k^{\rm h}\bigl[T^{\rm h}_u\p_x
\vf\bigr]_\Phi\ |\ e^\Psi\D_k^{\rm
h}\vf_\Phi\bigr)_{L^2_+}\bigr|\,dt'\\
&\lesssim \sum_{|k'-k|\leq
4}2^{k'}\Bigl(\int_{t_0}^t\dot{\tht}(t')\|\hbar^{\f12} e^\Psi\D_{k'}^{\rm
h}\vf_\Phi(t')\|_{L^2_+}^2\,dt'\Bigr)^{\f12}\Bigl(\int_{t_0}^t\dot{\tht}(t')\|\hbar^{\f12} e^\Psi\D_k^{\rm
h}\vf_\Phi(t')\|_{L^2_+}^2\,dt'\Bigr)^{\f12}\\
&\lesssim d_k^22^{-k}\|\hbar^{\f12}e^\Psi
\vf_\Phi\|_{\wt{L}^2_{t_0,t;\dot{\tht}(t)}(\cB^{1,0})}^2.
\end{split}
\eeq

Similarly, we have
$$\longformule{ \int_{t_0}^t\hbar(t')\bigl|\bigl(e^\Psi\D_k^{\rm h}\bigl[T^{\rm
h}_{\p_x \vf}u\bigr]_\Phi\ |\ e^\Psi\D_k^{\rm
h}\vf_\Phi\bigr)_{L^2_+}\bigr|\,dt'}{{}\lesssim \sum_{|k'-k|\leq
4}\int_{t_0}^t\|\hbar^{\f12}e^{\Psi} S_{k'-1}^{\rm
h}\p_x\vf_\Phi(t')\|_{L^2_{\rm v}(L^\infty_\h)}\|\D_{k'}^{\rm h}
u_\Phi(t')\|_{L^\infty_{\rm v}(L^2_\h)}\|\hbar^{\f12} e^{\Psi}\D_{k}^{\rm
h}\vf_\Phi(t')\|_{L^2_+}\,dt',} $$
from which and \eqref{S4eq3}, we infer
\beno
\begin{split}
&\int_{t_0}^t\hbar(t')\bigl|\bigl(e^\Psi\D_k^{\rm h}\bigl[T^{\rm
h}_{\p_x \vf}u\bigr]_\Phi\ |\ e^\Psi\D_k^{\rm
h}\vf_\Phi\bigr)_{L^2_+}\bigr|\,dt'\\
&\lesssim \sum_{|k'-k|\leq
4} 2^{-\frac{k'}2} \int_{t_0}^t\dot{\theta}(t')\|\hbar^{\f12} e^{\Psi} S_{k'-1}^{\rm
h}\p_x\vf_\Phi(t')\|_{L^2_{\rm v}(L^\infty_\h)}\|\hbar^{\f12} e^{\Psi}\D_{k}^{\rm
h}\vf_\Phi(t')\|_{L^2_+}\,dt'\\
&\lesssim \sum_{|k'-k|\leq
4}2^{-\frac{k'}2}\Bigl(\int_{t_0}^t\dot{\theta}(t')\|\hbar^{\f12}e^{\Psi} S_{k'-1}^{\rm
h}\p_x\vf_\Phi(t')\|_{L^2_{\rm v}(L^\infty_\h)}^2\,dt'\Bigr)^{\frac12}\\
&\qquad\qquad\qquad\qquad\qquad\qquad\times\Bigl(\int_{t_0}^t\dot{\theta}(t')
\|\hbar^{\f12} e^{\Psi}\D_{k}^{\rm
h}\vf_\Phi(t')\|_{L^2_+}^2\,dt'\Bigr)^{\frac12}.
\end{split}
\eeno
Yet it follows from Lemma \ref{lem:Bern} and Definition \ref{def1.1} that
\beq \label{S4eq8}
\begin{split}
\Bigl(\int_{t_0}^t&\dot{\theta}(t')\|\hbar^{\f12}e^{\Psi} S_{k'-1}^{\rm
h}\p_x\vf_\Phi(t')\|_{L^2_{\rm v}(L^\infty_\h)}^2\,dt'\Bigr)^{\frac12}\\
\lesssim &
\sum_{j\leq k'-2}2^{\frac{3j}2}\Bigl(\int_{t_0}^t\dot{\theta}(t')\|\hbar^{\f12}e^{\Psi}\D_{j}^{\rm
h}\vf_\Phi(t')\|_{L^2}^2\,dt'\Bigr)^{\frac12}\\
\lesssim &d_{k'}2^{\frac{k'}2}\|\hbar^{\f12}e^\Psi
\vf_\Phi\|_{\wt{L}^2_{t_0,t; \dot{\tht}(t)}(\cB^{1,0})}.
\end{split}
\eeq
As a result, it comes out
\beq \label{S4eq5}
\int_{t_0}^t\hbar(t')\bigl|\bigl(e^\Psi\D_k^{\rm h}\bigl[T^{\rm
h}_{\p_x \vf}u\bigr]_\Phi\ |\ e^\Psi\D_k^{\rm
h}\vf_\Phi\bigr)_{L^2_+}\bigr|\,dt'\lesssim  d_k^22^{-k}\|\hbar^{\f12}e^\Psi
\vf_\Phi\|_{\wt{L}^2_{t_0,t; \dot{\tht}(t)}(\cB^{1,0})}^2. \eeq

Finally again due to \eqref{S4eq3} and the support properties to the
Fourier transform of the terms in $R^{\rm h}(u,\p_x\vf),$ we get, by applying Lemma \ref{lem:Bern}, that
\beno
\begin{split}
&\int_{t_0}^t\hbar(t')\bigl|\bigl(e^\Psi\D_k^{\rm h}\bigl[R^{\rm h}(u,\p_x\vf
)\bigr]_\Phi\ |\ e^\Psi\D_k^{\rm
h}\vf_\Phi\bigr)_{L^2_+}\bigr|\,dt'\\
&\lesssim 2^{{\f{k}2}}\sum_{k'\geq
k-3}\int_{t_0}^t\|\wt{\D}^{\rm h}_{k'}u_\Phi(t')\|_{L^\infty_{\rm
v}(L^2_{\rm h})}\|\hbar^{\f12} e^{\Psi}\D_{k'}^{\rm
h}\p_x\vf_\Phi(t')\|_{L^2_+}\|\hbar^{\f12}e^{\Psi}\D_{k}^{\rm
h}\vf_\Phi(t')\|_{L^2_+}\,dt'\\
&\lesssim 2^{{\f{k}2}}\sum_{k'\geq
k-3}2^{\frac{k'}2}\int_{t_0}^t\dot{\theta}(t')\|\hbar^{\f12} e^{\Psi}\D_{k'}^{\rm
h}\vf_\Phi(t')\|_{L^2_+}\|\hbar^{\f12}e^{\Psi}\D_{k}^{\rm
h}\vf_\Phi(t')\|_{L^2_+}\,dt'\\
&\lesssim 2^{{\f{k}2}}\sum_{k'\geq
k-3}2^{\frac{k'}2}\Bigl(\int_{t_0}^t\dot{\theta}(t')\|\hbar^{\f12} e^{\Psi}\D_{k'}^{\rm
h}\vf_\Phi(t')\|_{L^2_+}^2\,dt'\Bigr)^{\frac12}\Bigl(\int_{t_0}^t\dot{\theta}(t')\|\hbar^{\f12}e^{\Psi}\D_{k}^{\rm
h}\vf_\Phi(t')\|_{L^2_+}^2\,dt'\Bigr)^{\frac12},
 \end{split} \eeno
 which together with Definition \ref{def1.1} ensures that
 \beq
 \label{S4eq6}
 \begin{split}
& \int_{t_0}^t\hbar(t')\bigl|\bigl(e^\Psi\D_k^{\rm h}\bigl[R^{\rm h}(u,\p_x\vf
)\bigr]_\Phi\ |\ e^\Psi\D_k^{\rm
h}\vf_\Phi\bigr)_{L^2_+}\bigr|\,dt'\\
&\lesssim d_k2^{-{\f{k}2}}\Bigl(\sum_{k'\geq
k-3}d_{k'}2^{-\frac{k'}2}\Bigr)\|\hbar^{\f12}e^\Psi
\vf_\Phi\|_{\wt{L}^2_{t_0,t;\dot{\tht}(t)}(\cB^{1,0})}^2\\
&\lesssim  d_k^22^{-k}\|\hbar^{\f12}e^\Psi
\vf_\Phi\|_{\wt{L}^2_{t_0,t;\dot{\tht}(t)}(\cB^{1,0})}^2.
\end{split}
\eeq

By summing up \eqref{S4eq4}, \eqref{S4eq5} and \eqref{S4eq6}, we achieve
\beq
\label{S4eq7}
\int_{t_0}^t\hbar(t')\bigl|\bigl(e^\Psi\D_k^{\rm
h}[u\p_x\vf]_\Phi\ |\ e^\Psi\D_k^{\rm h}\vf_\Phi\bigr)_{L^2_+}\bigr|\,dt'\lesssim  d_k^22^{-k}\|\hbar^{\f12}e^\Psi
\vf_\Phi\|_{\wt{L}^2_{t_0,t;\dot{\tht}(t)}(\cB^{1,0})}^2. \eeq

Whereas it is easy to observe that
\beq \label{S4eq9b}
\begin{split}
&\int_{t_0}^t\bigl(e^\Psi\D_k^{\rm h}[(u^s+\e f(t)\chi(y))\p_x \vf
]_\Phi\ |\ e^\Psi\D_k^{\rm
h}\vf_\Phi\bigr)_{L^2_+}\,dt'\\
&=\frac12\int_{t_0}^t\int_{\R_+} e^{2\Psi}(u^s+\e f(t)\chi(y))\int_{\R}\p_x\left(\D_k^{\rm
h}\vf_\Phi\right)^2\,dx\,dy\,dt'=0.
\end{split}
\eeq

Combining \eqref{S4eq7} with \eqref{S4eq9b} leads to \eqref{S4eq7a}. This finishes the proof of Lemma \ref{S4lem2}.
\end{proof}

\begin{proof}[Proof of Lemma \ref{S4lem3}]
By applying Bony's decomposition in the horizontal variable \eqref{Bony} to $\p_yu\p_x\vf,$ we write
\beno
\p_yu\p_x\vf=T^{\rm h}_{\p_yu}\p_x \vf+T^{\rm h}_{\p_x \vf}\p_yu+R^{\rm h}(\p_yu,\p_x \vf). \eeno
Considering
\eqref{1.8bb} and the support properties to the Fourier transform of
the terms in $T^{\rm h}_{\p_yu}\p_x \vf,$ we write
\beno
\begin{split}
&\int_{t_0}^t\hbar(t')\bigl|\bigl(e^\Psi\int_y^\infty \D_k^{\rm h}\bigl[T^{\rm
h}_{\p_yu} \p_x \vf\bigr]_\Phi\,dy'\ |\ e^\Psi\D_k^{\rm
h}\vf_\Phi\bigr)_{L^2_+}\bigr|\,dt'\\
&\lesssim \int_{t_0}^t\hbar(t')\bigl|\int_{\R^2_+} e^{-\f34\Psi}\int_y^\infty e^{\f74\Psi}\bigl|\D_k^{\rm h}\bigl[T^{\rm
h}_{\p_yu} \p_x \vf\bigr]_\Phi\bigr|\,dy'\ |\ e^\Psi|\D_k^{\rm
h}\vf_\Phi|\,dx\,dy\,dt'\\
&\lesssim \sum_{|k'-k|\leq
4}\int_{t_0}^t \|e^{-\f34\Psi(t')}\|_{L^2_{\rm v}}\|e^{\f34\Psi} S_{k'-1}^{\rm
h}\p_y u_\Phi(t')\|_{L^2_{\rm v}(L^\infty_\h)}\\
&\qquad\qquad\qquad\times \|\hbar^{\f12}e^{\Psi}\D_{k'}^{\rm h}\p_x
\vf_\Phi(t')\|_{L^2_+}\|\hbar^{\f12} e^{\Psi}\D_{k}^{\rm
h}\vf_\Phi(t')\|_{L^2_+}\,dt'\\
&\lesssim \sum_{|k'-k|\leq
4}2^{k'}\int_{t_0}^t \w{t'}^{\frac14}\|e^\Psi\p_yG_\Phi(t')\|_{\cB^{\frac12,0}} \|\hbar^{\f12}e^{\Psi}\D_{k'}^{\rm h}
\vf_\Phi(t')\|_{L^2_+}\|\hbar^{\f12} e^{\Psi}\D_{k}^{\rm
h}\vf_\Phi(t')\|_{L^2_+}\,dt',
\end{split} \eeno
where we used  Lemma \ref{lem:Bern}  and \eqref{S2eq20}  in the last step so that
\beno
\|e^{\f34\Psi} S_{k'-1}^{\rm
h}\p_y u_\Phi(t')\|_{L^2_{\rm v}(L^\infty_\h)}\lesssim \|e^\Psi\p_yG_\Phi(t')\|_{\cB^{\frac12,0}}.
\eeno
Then we get, by applying H\"older's inequality and \eqref{1.9}, that
\beno
\begin{split}
&\int_{t_0}^t\hbar(t')\bigl|\bigl(e^\Psi\int_y^\infty \D_k^{\rm h}\bigl[T^{\rm
h}_{\p_yu} \p_x \vf\bigr]_\Phi\,dy'\ |\ e^\Psi\D_k^{\rm
h}\vf_\Phi\bigr)_{L^2_+}\bigr|\,dt'\\
&\lesssim \sum_{|k'-k|\leq
4}2^{k'}\Bigl(\int_{t_0}^t \dot{\theta}(t') \|\hbar^{\f12}e^{\Psi}\D_{k'}^{\rm h}
\vf_\Phi(t')\|_{L^2_+}^2\,dt'\Bigr)^{\frac12}\Bigl(\int_{t_0}^t \dot{\theta}(t')\|\hbar^{\f12} e^{\Psi}\D_{k}^{\rm
h}\vf_\Phi(t')\|_{L^2_+}^2\,dt'\Bigr)^{\frac12}\\
&\lesssim  d_k^22^{-k}\|\hbar^{\f12}e^\Psi
\vf_\Phi\|_{\wt{L}^2_{t_0,t;\dot{\tht}(t)}(\cB^{1,0})}^2.
\end{split} \eeno

Along the same line, by virtue of \eqref{S2eq20},  we infer
\beno
\begin{split}
&\int_{t_0}^t\hbar(t')\bigl|\bigl(e^\Psi\int_y^\infty \D_k^{\rm h}\bigl[T^{\rm
h}_{ \p_x \vf}{\p_yu}\bigr]_\Phi\,dy'\ |\ e^\Psi\D_k^{\rm
h}\vf_\Phi\bigr)_{L^2_+}\bigr|\,dt'\\
&\lesssim \sum_{|k'-k|\leq
4}\int_{t_0}^t \w{t'}^{\frac14}\|\hbar^{\f12} e^\Psi S_{k'-1}^\h \p_x
\vf_\Phi(t')\|_{L^2_{\rm v}(L^\infty_\h)}  \|e^{\f34\Psi}\D_{k'}^{\rm h} \p_yu_\Phi(t')\|_{L^2_+}\|\hbar^{\f12} e^{\Psi}\D_{k}^{\rm
h}\vf_\Phi(t')\|_{L^2_+}\,dt'\\
&\lesssim \sum_{|k'-k|\leq
4}2^{-\frac{k'}2} \int_{t_0}^t \w{t'}^{\frac14}\|e^\Psi\p_yG_\Phi(t')\|_{\cB^{\frac12,0}}\|\hbar^{\f12} e^\Psi S_{k'-1}^\h \p_x
\vf_\Phi(t')\|_{L^2_{\rm v}(L^\infty_\h)}\|\hbar^{\f12} e^{\Psi}\D_{k}^{\rm
h}\vf_\Phi(t')\|_{L^2_+}\,dt'\\
&\lesssim \sum_{|k'-k|\leq
4}2^{-\frac{k'}2}\Bigl(\int_{t_0}^t \dot{\theta}(t')\|\hbar^{\f12} e^\Psi S_{k'-1}^\h \p_x
\vf_\Phi(t')\|_{L^2_{\rm v}(L^\infty_\h)}^2\,dt'\Bigr)^{\frac12}\\
&\qquad\qquad\qquad\qquad\qquad\qquad\qquad\times\Bigl(\int_{t_0}^t \dot{\theta}(t')\|\hbar^{\f12} e^{\Psi}\D_{k}^{\rm
h}\vf_\Phi(t')\|_{L^2_+}^2\,dt'\Bigr)^{\frac12},
\end{split} \eeno
from which, and \eqref{S4eq8}, we deduce that
\beno
\begin{split}
&\int_{t_0}^t\hbar(t')\bigl|\bigl(e^\Psi\int_y^\infty \D_k^{\rm h}\bigl[T^{\rm
h}_{ \p_x \vf}{\p_yu}\bigr]_\Phi\,dy'\ |\ e^\Psi\D_k^{\rm
h}\vf_\Phi\bigr)_{L^2_+}\bigr|\,dt'
\lesssim  d_k^22^{-k}\|\hbar^{\f12}e^\Psi
\vf_\Phi\|_{\wt{L}^2_{t_0,t;\dot{\tht}(t)}(\cB^{1,0})}^2.\end{split} \eeno

Finally due to  the support properties to the
Fourier transform of the terms in $R^{\rm h}(u,\p_x\vf),$ we get, by applying Lemma \ref{lem:Bern} and \eqref{S2eq20}, that
\beno
\begin{split}
&\int_{t_0}^t\hbar(t')\bigl|\bigl(e^\Psi\int_y^\infty \D_k^{\rm h}\bigl[R^{\rm h}(\p_yu,\p_x\vf
)\bigr]_\Phi\,dy'\ |\ e^\Psi\D_k^{\rm
h}\vf_\Phi\bigr)_{L^2_+}\bigr|\,dt'\\
&\lesssim 2^{{\f{k}2}}\sum_{k'\geq
k-3}\int_{t_0}^t \w{t'}^{\frac14}\|e^{\f34\Psi}\wt{\D}^{\rm h}_{k'}\p_y u_\Phi(t')\|_{L^2_+}\|\hbar^{\f12} e^{\Psi}\D_{k'}^{\rm
h}\p_x\vf_\Phi(t')\|_{L^2_+}\|\hbar^{\f12}e^{\Psi}\D_{k}^{\rm
h}\vf_\Phi(t')\|_{L^2_+}\,dt'\\
&\lesssim 2^{{\f{k}2}}\sum_{k'\geq
k-3}2^{\frac{k'}2}\int_{t_0}^t\dot{\theta}(t')\|\hbar^{\f12} e^{\Psi}\D_{k'}^{\rm
h}\vf_\Phi(t')\|_{L^2_+}\|\hbar^{\f12}e^{\Psi}\D_{k}^{\rm
h}\vf_\Phi(t')\|_{L^2_+}\,dt.
 \end{split} \eeno
Then a similar derivation of \eqref{S4eq6} leads to
 \beno
 \begin{split}
 \int_{t_0}^t\hbar(t')\bigl|\bigl(e^\Psi\int_y^\infty \D_k^{\rm h}\bigl[R^{\rm h}(\p_yu,\p_x\vf
)\bigr]_\Phi\,dy'\ |\ e^\Psi\D_k^{\rm
h}\vf_\Phi\bigr)_{L^2_+}\bigr|\,dt'
\lesssim  d_k^22^{-k}\|\hbar^{\f12}e^\Psi
\vf_\Phi\|_{\wt{L}^2_{t_0,t;\dot{\tht}(t)}(\cB^{1,0})}^2.
\end{split}
\eeno

As a consequence, we arrive at \eqref{S4eq9}.  This finishes the proof of Lemma \ref{S4lem3}. \end{proof}

\smallskip

\renewcommand{\theequation}{\thesection.\arabic{equation}}
\setcounter{equation}{0}

\section{Analytic energy estimate of $u$}\label{Sect5}

In this section,  we are going to present the weighted analytic energy estimate of $u$ and to obtain its decay-in-time estimate,
namely, we shall present the proof of Proposition \ref{S5prop2}.
The key ingredient will be the following proposition:

\begin{prop}\label{S5prop1}
{\sl
Let $\Phi(t,\xi)$ and $\Psi(t,y)$ be given by \eqref{eq2.6} and
\eqref{eq2.7} respectively. Let $u$ be a smooth enough solution of \eqref{S1eq5}. Then for any nonnegative and non-decreasing function $h\in C^1(\R_+),$ there exits a large enough constant $\lambda$ so that
 \beq \label{1.18}
\begin{split} \|\hbar^{\f12}e^\Psi
u_\Phi\|_{\wt{L}^\infty(t_0,t;\cB^{\f{1}2,0})}+&
\|\hbar^{\f12} e^\Psi
\p_y u_\Phi\|_{\wt{L}^2(t_0,t;\cB^{\f12,0})}\\
&\qquad\qquad\leq  \|\hbar^{\f12}e^\Psi
u_\Phi(t_0)\|_{\cB^{\f{1}2,0}}+\|\sqrt{\hbar'} e^\Psi u_\Phi\|_{\wt{L}^2(t_0,t;\cB^{\f12,0})}, \end{split} \eeq
 for any $t_0\in  [0,t]$
with $t<T^\ast,$ which is defined by \eqref{1.8a}.
  }
\end{prop}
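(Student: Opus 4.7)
The plan is to adapt, mutatis mutandis, the weighted analytic scheme of Proposition \ref{S4prop1} to the $u$-equation of \eqref{S1eq5}. First I would apply the Fourier multiplier $e^{\Phi(t,D_x)}$ to \eqref{S1eq5} to obtain
\[
\partial_t u_\Phi+\lambda\dot{\theta}(t)|D_h|u_\Phi-\partial_y^2 u_\Phi+\bigl[(u+u^s+\varepsilon f\chi)\partial_x u\bigr]_\Phi+\bigl[v\,\partial_y(u+u^s+\varepsilon f\chi)\bigr]_\Phi=0,
\]
apply $\Delta_k^{\rm h}$, and take the $L^2_+$ inner product against $\hbar(t)e^{2\Psi}\Delta_k^{\rm h}u_\Phi$, then integrate in time over $[t_0,t]$. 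The boundary condition $u|_{y=0}=0$ (available here, unlike for $\varphi$) is what allows the integrations by parts in $y$ to produce no boundary terms.

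The linear contributions are handled exactly as in Lemma \ref{S4lem1}: the time derivative and diffusion terms, combined with the Treves-type cancellation \eqref{eq2.8}, produce $\tfrac12\bigl(\|\hbar^{1/2}e^\Psi\Delta_k^{\rm h} u_\Phi(t)\|_{L^2_+}^2-\|\hbar^{1/2}e^\Psi\Delta_k^{\rm h} u_\Phi(t_0)\|_{L^2_+}^2\bigr)+\tfrac12\|e^\Psi\Delta_k^{\rm h}\partial_y u_\Phi\|_{L^2(t_0,t;L^2_+)}^2$ minus the $\tfrac12\int\hbar'\|e^\Psi\Delta_k^{\rm h}u_\Phi\|_{L^2_+}^2\,dt'$ remainder that will reappear on the right-hand side as $\|\sqrt{\hbar'}e^\Psi u_\Phi\|_{\wt L^2(t_0,t;\cB^{1/2,0})}^2$; while the analytic weight term contributes $c\lambda 2^k\dot\theta(t)\|\hbar^{1/2}e^\Psi\Delta_k^{\rm h}u_\Phi\|_{L^2_+}^2$ by Lemma \ref{lem:Bern}.

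The nonlinear terms split into three families. (a) The purely horizontal-independent drift $(u^s+\varepsilon f\chi)\partial_x u$ vanishes from the energy identity after an integration by parts in $x$, just as in \eqref{S4eq9b}. (b) The genuine convection $u\partial_x u+v\partial_y u$ is treated by Bony's decomposition in $x$ as in Lemmas \ref{S4lem2}--\ref{S4lem3}: writing $v=-\partial_x\varphi$ and replacing every weighted analytic norm of $u$ or $\partial_y u$ by the corresponding norm of $G$ or $\partial_y G$ via Lemma \ref{S0lem1} (estimates \eqref{S2eq21} and \eqref{S2eq20}), each para\-product/remainder is bounded by $d_k^2\,2^{-k}\|\hbar^{1/2}e^\Psi u_\Phi\|_{\wt L^2_{t_0,t;\dot\theta(t)}(\cB^{1,0})}^2$ thanks to the definition \eqref{1.9} of $\dot\theta$. (c) The "source" terms $v\partial_y u^s$ and $\varepsilon f(t)\chi'(y) v=-\varepsilon f(t)\chi'(y)\partial_x\varphi$ are linear in $\partial_x\varphi$ (hence in $\partial_xu$ after using $u=\partial_y\varphi$ and Lemma \ref{S0lem1}) with coefficients whose weighted $L^2_{\rm v}$-norm is captured by $\dot\theta(t)$ via \eqref{S2eq13a} and the definition \eqref{1.9}; they are absorbed into the same bound.

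Summing these contributions, taking square roots, multiplying by $2^{k/2}$ and summing over $k\in\Z$, one arrives at
\[
\|\hbar^{1/2}e^\Psi u_\Phi\|_{\wt L^\infty(t_0,t;\cB^{1/2,0})}+\sqrt{2c\lambda}\,\|\hbar^{1/2}e^\Psi u_\Phi\|_{\wt L^2_{t_0,t;\dot\theta}(\cB^{1,0})}+\|\hbar^{1/2}e^\Psi\partial_y u_\Phi\|_{\wt L^2(t_0,t;\cB^{1/2,0})}
\]
\[
\le \|\hbar^{1/2}e^\Psi u_\Phi(t_0)\|_{\cB^{1/2,0}}+\|\sqrt{\hbar'}\,e^\Psi u_\Phi\|_{\wt L^2(t_0,t;\cB^{1/2,0})}+\sqrt C\,\|\hbar^{1/2}e^\Psi u_\Phi\|_{\wt L^2_{t_0,t;\dot\theta}(\cB^{1,0})},
\]
and choosing $\lambda$ so that $c\lambda\ge C$ lets the last right-hand-side term be absorbed by the second one on the left, yielding \eqref{1.18}. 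The main obstacle will be the $v\partial_y u$ term: because $v=-\partial_x\varphi$ has no intrinsic decay, one must systematically trade $u$, $\partial_y u$ and $\partial_y(y\varphi)$ for $G$-quantities via Lemma \ref{S0lem1}; this entails picking slightly lossy Gaussian weights ($\gamma<1$ in \eqref{S2eq21}--\eqref{S7eq21}) and then recovering the loss through the Poincaré inequality \eqref{S2eq1} which underlies the control $\|e^\Psi\partial_y G_\Phi\|_{\cB^{1/2,0}}\approx\dot\theta(t)$ inherent in \eqref{1.9}.
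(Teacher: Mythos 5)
Your proposal is correct and follows essentially the same route as the paper: the paper also derives the weighted energy identity \eqref{1.8}, reuses Lemma \ref{S4lem1} for the linear part and Lemma \ref{S4lem2} for $(u+u^s+\e f\chi)\p_xu$, isolates the $v\p_yu$ term into a separate Bony-decomposition lemma (Lemma \ref{S5lem1}) in which $\p_yu$ is traded for $\p_yG$ via \eqref{S2eq20} and $v$ is controlled through $v=\int_y^\infty\p_xu\,dy'$ (the estimate \eqref{ZPq}, equivalent to your $v=-\p_x\vf$ reduction), bounds $v\p_y(u^s+\e f\chi)$ by \eqref{S2eq13a} and the definition of $\dot\tht$, and closes by taking $c\lam\geq C$. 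The only cosmetic difference is organizational: the key $L^\infty_{\rm v}(L^2_{\rm h})$ bound on $\D_k^{\rm h}v_\Phi$ is proved inline rather than quoted from Lemma \ref{S0lem1}.
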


\begin{proof}
 In view of
 \eqref{S1eq5}, we get, by a similar derivation of \eqref{S4eq2}, that
 \beq \label{1.8}
\begin{split}
\hbar(t)\bigl(&e^\Psi\D_k^{\rm h}\left(\p_t u_\Phi-\p_{yy}u_\Phi\right)\ |\ e^\Psi\D_k^{\rm
h}u_\Phi\bigr)_{L^2_+}+\lam\dot{\tht}(t)\hbar(t)\bigl(e^\Psi|D_h|\D_k^{\rm
h}u_\Phi\ |\ e^\Psi\D_k^{\rm
h}u_\Phi\bigr)_{L^2_+}\\
&+\hbar(t)\bigl(e^\Psi\D_k^{\rm h}[\left(u+u^s+\e f(t)\chi(y)\right)\p_x u
]_\Phi\ |\ e^\Psi\D_k^{\rm
h}u_\Phi\bigr)_{L^2_+}\\
&+\hbar(t)\bigl(e^\Psi\D_k^{\rm h}\bigl[v\p_y\left(u+u^s+\e f(t)\chi(y)\right)\bigr]_\Phi\ |\ e^\Psi\D_k^{\rm
h}u_\Phi\bigr)_{L^2_+}=0.
\end{split}
\eeq
 In what follows, we shall always assume that $t<T^\ast$ with
$T^\ast$ being determined by \eqref{1.8a} so that by
virtue of \eqref{eq2.6}, for any $t<T^\ast,$ there holds the
 convex inequality \eqref{1.8bb}.

Let us now handle term by term in \eqref{1.8}.\vspace{0.2cm}

Firstly due to $u|_{y=0}=0,$ we get, by a similar proof of Lemma \ref{S4lem1}, that

\beq\label{1.11}
\begin{split}
\int_{t_0}^t&\hbar(t')\bigl(e^\Psi\D_k^{\rm h}\left(\p_t u_\Phi-\p_{yy}u_\Phi\right) \ |\ e^\Psi\D_k^{\rm
h}u_\Phi\bigr)_{L^2_+}\,dt'\\
\geq &\f12\Bigl(\|\hbar^{\f12}e^\Psi\D_k^{\rm
h}u_\Phi(t)\|_{L^2_+}^2-\|\hbar^{\f12}e^{\Psi}\D_k^{\rm
h} u_\Phi(t_0)\|_{L^2_+}^2\\
&-\int_{t_0}^t\hbar'(t')\|e^\Psi\D_k^{\rm
h}u_\Phi(t')\|_{L^2_+}^2\,dt'+\|\hbar^{\f12} e^\Psi\D_k^{\rm
h}\p_yu_\Phi\|_{L^2_t(L^2_+)}^2\Bigr).
\end{split}
\eeq

While it follows  from Lemma \ref{S4lem2}  that
 \beq\label{1.13}
 \begin{split}
\int_{t_0}^t\hbar\bigl|\bigl(e^\Psi\D_k^{\rm h}[\left(u+u^s+\e f(t)\chi(y)\right)
\p_xu]_\Phi\ |\ &
e^\Psi\D_k^{\rm h}u_\Phi\bigr)_{L^2_+}\bigr|\,dt'\\
&\lesssim
d_k^22^{-k}\|\hbar^{\f12}e^\Psi
u_\Phi\|_{\wt{L}^2_{t_0,t;\dot{\tht}(t)}(\cB^{1,0})}^2. \end{split} \eeq

To deal with the estimate of $\int_{t_0}^t\hbar(t')\bigl(e^\Psi\D_k^{\rm
h}\bigl[v\p_yu\bigr]_\Phi\ |\ e^\Psi\D_k^{\rm
h}u_\Phi\bigr)_{L^2_+}\,dt',$ we need the following lemma, the proof of which will be postponed
at the end of this section.

\begin{lem}\label{S5lem1}
{\sl Under the assumptions of Proposition \ref{S5prop1}, for any $t_0\in [0,t]$ with $t<T^\ast,$ we have
\beq\label{1.14}
\int_{t_0}^t\hbar(t')\bigl|\bigl(e^\Psi\D_k^{\rm h}[v\p_yu]_\Phi\ |\ e^\Psi\D_k^{\rm
h}u_\Phi\bigr)_{L^2_+}\bigr|\,dt'\lesssim d_{k}^22^{-k}\|\hbar^{\f12}e^\Psi
u_\Phi\|_{\wt{L}^2_{t_0,t;\dot{\tht}(t)}(\cB^{1,0})}^2. \eeq}
\end{lem}

On the other hand,  due to $\p_xu+\p_yv=0,$ we have $v=-\int_y^\infty \p_yv\,dy'=\int_y^\infty \p_xu\,dy',$ so
 that it follows from Lemma \ref{lem:Bern} that for any $\ga\in (0,1)$
  \beq \label{ZPq}
\begin{split}
\bigl\|e^{\ga\Psi}{\D}_{k}^{\rm h}v_\Phi(t')\bigr\|_{L^\infty_{\rm v}(L^2_{\rm h})}\lesssim  &
2^k\bigl\|e^{\ga\Psi}\Bigl(\int_y^\infty e^{-2\Psi}\,dy'\Bigr)^{\f12}\Bigl(\int_0^\infty
|e^\Psi {\D}_{k}^{\rm h} u_\Phi(t')|^2\,dy\Bigr)^{\f12}\bigr\|_{L^\infty_{\rm v}(L^2_{\rm h})}\\
\lesssim & 2^{k'}\w{t'}^{\f14}\|e^{-\f{1-\ga}2\Psi}\|_{L^\infty_{\rm v}}\|e^{\Psi}{\D}_{k}^{\rm h}
u_\Phi(t')\|_{L^2_+}\\
\lesssim & 2^{k'}\w{t'}^{\f14}\|e^{\Psi}{\D}_{k}^{\rm h}
u_\Phi(t')\|_{L^2_+}.
\end{split}
\eeq
Then we deduce from  \eqref{S2eq13a} and \eqref{ZPq} that
\beno
\begin{split}
&\int_{t_0}^t\hbar(t')\bigl|\bigl(e^\Psi\D_k^{\rm h}v_\Phi
\p_y\left(u^s+\e f(t)\chi(y)\right)\ |\ e^\Psi\D_k^{\rm
h}u_\Phi\bigr)_{L^2_+}\bigr|\,dt'\\
&\leq \int_{t_0}^t\hbar(t')\bigl(\|e^{\f34\Psi}\p_yu^s\|_{L^2_{\rm v}}+\e f(t')\|e^\Psi\chi'\|_{L^2_{\rm v}}\bigr)
\bigl\|e^{\f\Psi4}\D_k^\h v_\Phi\|_{L^\infty_{\rm v}(L^2_\h)}
\|e^\Psi\D_k^\h u_\Phi\|_{L^2_+}\,dt'\\
&\lesssim 2^k\int_{t_0}^t\hbar(t')\w{t'}^{\f14}\bigl(\|e^\Psi\p_yG^s\|_{L^2_{\rm v}}+\e f(t')\|e^\Psi\chi'\|_{L^2_{\rm v}}\bigr)
\|e^\Psi\D_k^\h u_\Phi\|_{L^2_+}^2\,dt'.
\end{split}
\eeno
As a result, thanks to
\eqref{1.9} and Definition \ref{def1.1}, we achieve
\beq \label{4.6}
\begin{split}
\int_{t_0}^t\hbar(t')\bigl|\bigl(&e^\Psi\D_k^{\rm h}v_\Phi
\p_y\left(u^s+\e f(t)\chi(y)\right)\ |\ e^\Psi\D_k^{\rm
h}u_\Phi\bigr)_{L^2_+}\bigr|\,dt'\\
&\lesssim 2^k\int_{t_0}^t\dot\theta(t')
\|\hbar^{\f12}e^\Psi\D_k^\h u_\Phi\|_{L^2_+}^2\,dt'\lesssim d_k^22^{-k}\|\hbar^{\f12}e^\Psi
u_\Phi\|_{\wt{L}^2_{t_0,t;\dot{\tht}(t)}(\cB^{1,0})}^2.
\end{split}
\eeq

Whereas it follows from Lemma \ref{lem:Bern}  that
\beq \label{6.8} \lam\dot{\tht}(t)\bigl(e^\Psi|D_h|\D_k^{\rm h}u_\Phi\ |\
e^\Psi\D_k^{\rm h}u_\Phi\bigr)_{L^2_+}\geq
c\lam\dot{\tht}(t)2^k\|e^\Psi\D_{k}^{\rm h}u_\Phi(t)\|_{L^2_+}^2.
\eeq

By integrating \eqref{1.8} over
$[t_0,t]$ and then inserting the estimates, \eqref{1.11}, \eqref{1.13}, \eqref{1.14}, \eqref{4.6} and \eqref{6.8}, into the resulting
inequality, we conclude that
 \beq \label{4.7}
\begin{split} &\|\hbar^{\f12}e^\Psi\D_k^{\rm
h}u_\Phi\|_{L^\infty(t_0,t;L^2_+)}^2+2c\lam
2^k\int_{t_0}^t\dot{\tht}(t')\|e^\Psi\hbar^{\f12}\D_k^{\rm
h}u_\Phi(t')\|_{L^2_+}^2\,dt'\\
&\qquad+\|\hbar^{\f12}e^\Psi\D_k^{\rm
h}\p_yu_\Phi\|_{L^2(t_0,t;L^2_+)}^2\\
&\leq  \|\hbar^{\f12}e^{\Psi}\D_k^{\rm
h} u_\Phi(t_0)\|_{L^2_+}^2+\int_{t_0}^t\hbar'(t')\|e^\Psi\D_k^{\rm
h}u_\Phi(t')\|_{L^2_+}^2\,dt'\\
&\qquad\qquad\qquad\qquad\qquad\qquad+Cd_k^22^{-k} \|\hbar^{\f12} e^\Psi
u_\Phi\|_{\wt{L}^2_{t_0,t;\dot{\tht}(t)}(\cB^{1,0})}^2.\end{split}
\eeq
By taking square root of \eqref{4.7} and then multiplying
the resulting inequality by $2^{\f{k}2}$ and finally summing
over $k\in\Z,$ we find for any $t < T^\ast$ that \beq
\label{1.17}\begin{split} &\|\hbar^{\f12}e^\Psi
u_\Phi\|_{\wt{L}^\infty(t_0,t;\cB^{\f{1}2,0})}+\sqrt{2c\lam}\|\hbar^{\f12} e^\Psi
u_\Phi\|_{\wt{L}^2_{t_0,t;\dot{\tht}(t)}(\cB^{1,0})}+\|\hbar^{\f12} e^\Psi
\p_y u_\Phi\|_{\wt{L}^2(t_0,t;\cB^{\f12,0})}\\
&\qquad\leq \|\hbar^{\f12}e^\Psi
u_\Phi(t_0)\|_{\cB^{\f{1}2,0}}+\|\sqrt{\hbar'} e^\Psi
 u_\Phi\|_{\wt{L}^2(t_0,t;\cB^{\f12,0})}+\sqrt{C}\|\hbar^{\f12} e^\Psi
u_\Phi\|_{\wt{L}^2_{t_0,t;\dot{\tht}(t)}(\cB^{1,0})}. \end{split}
\eeq  By taking  $\lam$ in \eqref{1.17} to be a large enough positive constant so that
$c\lam\geq C,$  we deduce \eqref{1.18}. This completes the proof of Proposition \ref{S5prop1}.
\end{proof}

Now we are in a position to complete the proof of   Proposition \ref{S5prop2}.

\begin{proof}[Proof of Proposition \ref{S5prop2}]
Taking $\hbar(t)=1$ and $t_0=0$ in \eqref{1.18} gives rise to
 \beq \label{1.19}
\| e^\Psi
u_\Phi\|_{\wt{L}^\infty_t(\cB^{\f{1}2,0})}+\| e^\Psi
\p_y u_\Phi\|_{\wt{L}^2_t(\cB^{1,0})}\leq C\|e^{\f{y^2}8} e^{\delta|D_x|}u_0\|_{\cB^{\f{1}2,0}}. \eeq

While by taking $\hbar(t)=(t-t_0)$ and  $t_0=\f{t}2$ in \eqref{1.18}, we find
 \beno
\begin{split} \|t^{\f12}e^\Psi
u_\Phi(t)\|_{\cB^{\f{1}2,0}}\lesssim  \|(t'-t/2)^{\f12}e^\Psi
u_\Phi\|_{\wt{L}^\infty(t/2,t;\cB^{\f{1}2,0})}
\lesssim
\| e^\Psi
 u_\Phi\|_{\wt{L}^2(t/2,t;\cB^{\f12,0})}. \end{split} \eeno
Note that $u=\p_y\vf,$  by virtue of \eqref{S4eq11ag} and \eqref{S4eq16}, we achieve
\beq \label{S4eq11}
\begin{split} \|t^{\f12}e^\Psi
u_\Phi(t)\|_{\cB^{\f{1}2,0}}
\lesssim
 \|e^{\Psi}\vf_\Phi(t/2)\|_{\cB^{\f12,0}}\leq
C\|e^{\f{y^2}8} e^{\delta|D|}\vf_0\|_{\cB^{\f{1}2,0}}\w{t}^{-\f14},\end{split} \eeq

 Finally thanks to \eqref{S4eq11}, we get, by taking $\hbar(t)=t$ and then $t_0=\f{t}2$ in \eqref{1.18}, we obtain
\beno
\begin{split} \|(t')^{\f12}e^\Psi
u_\Phi\|_{\wt{L}^\infty(t/2,t;\cB^{\f{1}2,0})}
+&\|(t')^{\f12} e^\Psi
\p_y u_\Phi\|_{\wt{L}^2(t/2,t;\cB^{\f12,0})}\\
\leq & \|(t/2)^{\f12}e^\Psi
u_\Phi(t/2)\|_{\cB^{\f{1}2,0}}+
\| e^\Psi
 u_\Phi\|_{\wt{L}^2(t/2,t;\cB^{\f12,0})}\\
 \leq &
C\|e^{\f{y^2}8} e^{\delta|D|}\vf_0\|_{\cB^{\f{1}2,0}}\w{t}^{-\f14} \end{split} \eeno
 which together with \eqref{1.19} and \eqref{S4eq11} ensures \eqref{S4eq12}. This ends the proof of Proposition \ref{S5prop2}.
\end{proof}

Proposition \ref{S5prop1} has been proved provided that we present the proof of Lemma \ref{S5lem1}.

\begin{proof}[Proof of Lemma \ref{S5lem1}]
Once again we first get, by applying Bony's decomposition in the horizontal variable
\eqref{Bony} to $v\p_yu,$ that
\beq \label{pd1} v\p_yu=T^{\rm
h}_{v}\p_yu+T^{\rm h}_{\p_yu}v+R^{\rm h}(v,\p_yu).
 \eeq
 Considering
\eqref{1.8bb} and the support properties to the Fourier transform of
the terms in $T^{\rm h}_{v}\p_y u,$  and thanks to
  \eqref{S2eq20}, we get
   \beno \begin{split}
\int_{t_0}^t&\hbar(t')\bigl|\bigl(e^\Psi\D_k^{\rm h}\bigl[T^\h_{v}\p_yu\bigr]_\Phi\ |\ e^\Psi\D_k^{\rm
h}u_\Phi\bigr)_{L^2_+}\bigr|\,dt'\\
\lesssim &\sum_{|k'-k|\leq
4}\int_{t_0}^t\bigl\|\hbar^{\f12}e^{\f\Psi4} S_{k'-1}^{\rm h}v_\Phi(t')\bigr\|_{L^\infty_+}\|e^{\f34\Psi}\D_{k'}^{\rm
h}\p_yu_\Phi(t')\|_{L^2_+}\|\hbar^{\f12}e^{\Psi}\D_{k}^{\rm
h}u_\Phi(t')\|_{L^2_+}\,dt'\\
\lesssim &\sum_{|k'-k|\leq
4}2^{-\f{k'}2}\int_{t_0}^t\bigl\|\hbar^{\f12} e^{\f\Psi4} S_{k'-1}^{\rm
h}v_\Phi(t')\bigr\|_{L^\infty_+}\|e^{\Psi}\p_yG_\Phi(t')\|_{\cB^{\f{1}2,0}}\|\hbar^{\f12} e^{\Psi}\D_{k}^{\rm
h}u_\Phi(t')\|_{L^2_+}\,dt', \end{split} \eeno from which and
\eqref{1.9}, we infer
 \beno \begin{split}
\int_{t_0}^t&\hbar(t')\bigl|\bigl(e^\Psi\D_k^{\rm h}\bigl[T^\h_{v}\p_yu\bigr]_\Phi\ |\ e^\Psi\D_k^{\rm
h}u_\Phi\bigr)_{L^2_+}\bigr|\,dt'\\
\lesssim &\sum_{|k'-k|\leq
4}2^{-\f{k'}2}\Bigl(\int_{t_0}^t\w{t'}^{-\f12}\dot{\tht}(t')\bigl\|\hbar^{\f12}e^{\f\Psi4} S_{k'-1}^{\rm
h}
v_\Phi(t')\bigr\|_{L^\infty_+}^2\,dt'\Bigr)^{\f12}\\
&\qquad\qquad\qquad\qquad\qquad\qquad\qquad\qquad\quad\times\Bigl(\int_{t_0}^t\dot{\tht}(t')\|\hbar^{\f12}e^{\Psi}\D_{k}^{\rm
h}u_\Phi(t')\|_{L^2_+}^2\,dt'\Bigr)^{\f12}.
\end{split}
\eeno
 Whereas in view of Definition \ref{def1.1} and \eqref{ZPq}, we get,  by applying Lemma
\ref{lem:Bern}, that \beno
\begin{split}
\Bigl(\int_{t_0}^t\w{t'}^{-\f12}\dot{\tht}(t')\bigl\|\hbar^{\f12}e^{\f\Psi4} S_{k'-1}^{\rm
h}v\bigr\|_{L^\infty_+}^2\,dt'\Bigr)^{\f12}
\lesssim & \sum_{\ell\leq
k'-2}2^{\f32{\ell}}\Bigl(\int_{t_0}^t\dot{\tht}(t')\|\hbar^{\f12} e^{\Psi}\D_{\ell}^{\rm
h}u_\Phi(t')\|_{L^2_+}^2\,dt'\Bigr)^{\f12}\\
\lesssim & d_{k'}2^{\f{k'}2}\|\hbar^{\f12} e^\Psi
u_\Phi\|_{\wt{L}^2_{t_0,t;\dot{\tht}(t)}(\cB^{1,0})}.
\end{split}
\eeno
 Whence we obtain
  \beno \int_{t_0}^t\hbar(t')\bigl|\bigl(e^\Psi\D_k^{\rm
h}\bigl[T^\h_{v}\p_yu\bigr]_\Phi\ |\
e^\Psi\D_k^{\rm h}u_\Phi\bigr)_{L^2_+}\bigr|\,dt'\lesssim
d_{k}^22^{-k}\|\hbar^{\f12} e^\Psi
u_\Phi\|_{\wt{L}^2_{t_0,t;\dot{\tht}(t)}(\cB^{1,0})}^2. \eeno
By
the same manner, in view of \eqref{S2eq20} and \eqref{ZPq},  we infer
 \beno \begin{split}
\int_{t_0}^t&\hbar(t')\bigl|\bigl(e^\Psi\D_k^{\rm
h}\bigl[T^\h_{\p_yu}v\bigr]_\Phi\ |\
e^\Psi\D_k^{\rm h}u_\Phi\bigr)_{L^2_+}\bigr|\,dt'\\
\lesssim
&\sum_{|k'-k|\leq 4}\int_{t_0}^t\|e^{\f34\Psi} S_{k'-1}^{\rm
h}(\p_yu_\Phi(t'))\|_{L^2_{\rm v}(L^\infty_{\rm h})}\\
&\qquad\qquad\qquad\times \bigl\|e^{\f{\Psi}4}\hbar^{\f12}\D_{k'}^{\rm h}v_\Phi(t')\bigr\|_{L^\infty_{\rm v}(L^2_{\rm
h})}\|\hbar^{\f12} e^{\Psi}\D_{k}^{\rm
h}u_\Phi(t')\|_{L^2_+}\,dt'\\
\lesssim &\sum_{|k'-k|\leq
4}2^{k'}\int_{t_0}^t\|e^\Psi\p_yG_\Phi(t')\|_{\cB^{\f{1}2,0}}\w{t'}^{\f14}\|\hbar^{\f12}e^{\Psi}\D_{k'}^{\rm
h}u_\Phi(t')\|_{L^2}\|\hbar^{\f12} e^{\Psi}\D_{k}^{\rm
h}u_\Phi(t')\|_{L^2_+}\,dt',
\end{split}
\eeno from which, we get, by   a similar derivation of \eqref{S4eq4}, that
\beno \int_{t_0}^t\hbar(t')\bigl|\bigl(e^\Psi\D_k^{\rm
h}\bigl[T^\h_{\p_yu}v\bigr]_\Phi\ |\
e^\Psi\D_k^{\rm h}u_\Phi\bigr)_{L^2_+}\bigr|\,dt'\lesssim
d_{k}^22^{-k}\|\hbar^{\f12} e^\Psi
u_\Phi\|_{\wt{L}^2_{t_0,t;\dot{\tht}(t)}(\cB^{1,0})}^2. \eeno

Finally, considering the support properties to the
Fourier transform of the terms in $R^{\rm h}(v,\p_y u),$ we deduce from Lemma \ref{lem:Bern}  that
\beno\begin{split} \int_{t_0}^t&\hbar\bigl|\bigl(e^\Psi\D_k^{\rm
h}\bigl[R^{\rm h}(v,{\p_yu})\bigr]_\Phi\ |\ e^\Psi\D_k^{\rm
h}u_\Phi\bigr)_{L^2_+}\bigr|\,dt'\\
\lesssim
&2^{\f{k}2}\sum_{k'\geq k-3}\int_{t_0}^t\bigl\|\hbar^{\f12}e^{\f\Psi4} {\D}_{k'}^{\rm
h}v_\Phi(t')\bigr\|_{L^\infty_{\rm v}(L^2_{\rm
h})} \| e^{\f34\Psi}\wt{\D}_{k'}^{\rm
h}\p_yu_\Phi(t')\|_{L^2_+}\|\hbar^{\f12} e^{\Psi}\D_{k}^{\rm
h}u_\Phi(t')\|_{L^2_+}\,dt',
\end{split}
\eeno
from which, \eqref{S2eq20} and \eqref{ZPq},
we get, by a similar derivation of  \eqref{S4eq6}, that
\beno\begin{split}
\int_{t_0}^t&\hbar(t')\bigl|\bigl(e^\Psi\D_k^{\rm
h}\bigl[R^{\rm h}(v,{\p_yu})\bigr]_\Phi\ |\ e^\Psi\D_k^{\rm
h}u_\Phi\bigr)_{L^2_+}\bigr|\,dt'\\
\lesssim & 2^{\f{k}2}\sum_{k'\geq k-
3}2^{\f{k'}2}\int_{t_0}^t\w{t'}^{\f14}\|e^\Psi\p_yG_\Phi(t')\|_{\cB^{\f{1}2,0}}\|\hbar^{\f12} e^\Psi\D_{k'}^{\rm
h}u_\Phi(t')\|_{L^2_+}\|\hbar^{\f12} e^{\Psi}\D_{k}^{\rm
h}u_\Phi(t')\|_{L^2_+}\,dt'\\
\lesssim & 2^{\f{k}2}\sum_{k'\geq k-
3}2^{\f{k'}2}\Bigl(\int_{t_0}^t\dot{\theta}(t')\|\hbar^{\f12} e^\Psi\D_{k'}^{\rm
h}u_\Phi(t')\|_{L^2_+}^2\,dt'\Bigr)^{\f12}\Bigl(\int_{t_0}^t\dot{\theta}(t')\|\hbar^{\f12} e^{\Psi}\D_{k}^{\rm
h}u_\Phi(t')\|_{L^2_+}^2\,dt'\Bigr)^{\f12}\\
 \lesssim & d_{k}^22^{-k}\|\hbar^{\f12} e^\Psi
u_\Phi\|_{\wt{L}^2_{t_0,t;\dot{\tht}(t)}(\cB^{1,0})}^2.
\end{split}
\eeno

As a consequence, we achieve \eqref{1.14}. This finishes the proof of Lemma \ref{S5lem1}. \end{proof}

\smallskip

\renewcommand{\theequation}{\thesection.\arabic{equation}}
\setcounter{equation}{0}
\setcounter{equation}{0}
\section{The analytic energy estimate of the good quantity $G$}
\label{Sect7}

One key observation of this paper is that the weighted analytical norm of the function $g=\p_yG$ introduced in \eqref{S7eq4}
can control the evolution of the analytic radius to the solutions of \eqref{S1eq5}. In order to have a globally in time
estimate of the loss to the analytic radius of $u,$ we need the weighted analytical norm of  $\p_yG$  to decay fast enough as time
goes to $\infty.$
The goal of this section is to derive such a decay estimate of $G,$ namely, \eqref{S7eq24}.

Before preceding,  we first derive the equation satisfied by  $G,$ which is defined by \eqref{S7eq4}.
Indeed we observe from \eqref{S1eq2} that
\beq\label{S7eq1}
\begin{split}
&\p_t\bigl[\f{y}{2\w{t}}\vf\bigr]-\p_y^2\bigl[\f{y}{2\w{t}}\vf\bigr]+\w{t}^{-1}\bigl[u+\f{y}{2\w{t}}\vf\bigr]+\left(u+u^s+\e f(t)\chi(y)\right)\p_x\bigl[\f{y}{2\w{t}}\vf\bigr]\\
&\qquad\qquad\qquad\qquad\qquad\qquad\qquad+\f{y}{\w{t}}\int_y^\infty\left(\p_y \left(u+u^s+\e f(t)\chi(y')\right)\p_x\vf\right)\,dy'=0.
\end{split}
\eeq
Then by summing  up the $u$ equation of \eqref{S1eq5} with  \eqref{S7eq1}, we obtain the $G$ equation of \eqref{S7eq6}.
Moreover, due to $u|_{y=0}=0,$ we find $G|_{y=0}=0.$ As a consequence, $G$ verifies \eqref{S7eq6}.

The key ingredient used in the proof Proposition \ref{S7prop2} lies in  the following proposition:

\begin{prop}\label{S7prop1}
{\sl
Let $\Phi(t,\xi)$ and $\Psi(t,y)$ be given by \eqref{eq2.6} and
\eqref{eq2.7} respectively. Let the function $G$ be defined by \eqref{S7eq4}. Then for any nonnegative
and non-decreasing function $h\in C^1(\R_+),$ there exits a large enough constant $\lambda$ so that
 \beq \label{S7eq17}
\begin{split}
\f12&\|\hbar^{\f12}e^\Psi
\D_k^h G_\Phi\|_{{L}^\infty_{t}(L^2)}^2+\|\w{t'}^{-\f12}\hbar^{\f12}e^\Psi
\D_k^h G_\Phi\|_{{L}^2_{t}(L^2)}^2+\f12\|\hbar^{\f12}e^\Psi
\D_k^h \p_yG_\Phi\|_{{L}^2_{t}(L^2)}^2\\
&+c\lam 2^k\int_0^t\dot{\theta}(t')\|\hbar^{\f12}e^\Psi \D_k^h
G_\Phi(t')\|_{L^2}^2\,dt'\\
\leq & \f12\|\hbar^{\f12}e^\Psi
\D_k^h G_\Phi(0)\|_{L^2}^2+\f12\|\sqrt{\hbar'}e^\Psi
\D_k^h G_\Phi\|_{{L}^2_{t}(L^2)}^2+Cd_{k}^2
2^{-k}\|\hbar^{\f12}e^\Psi
G_\Phi\|_{\wt{L}^2_{t,\dot{\tht}(t)}(\cB^{1,0})}^2,
\end{split}
\eeq for any $t<T^\ast,$
 which is defined by \eqref{1.8a}.
}
\end{prop}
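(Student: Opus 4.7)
The strategy mirrors the proofs of Propositions \ref{S4prop1} and \ref{S5prop1}. I would first apply $\Delta_k^\h$ to equation \eqref{S7eq6} (written in the Fourier-weighted variable $G_\Phi$, picking up the extra $\lam\dot\theta(t)|D_\h|G_\Phi$ term), then take the $L^2_+$ inner product with $\hbar(t) e^{2\Psi}\Delta_k^\h G_\Phi$ and integrate over $[0,t]$. The heat-type part $\p_tG_\Phi-\p_y^2G_\Phi$ is treated exactly as in Lemma \ref{S4lem1} (using $G|_{y=0}=0$, integration by parts, and the eikonal identity \eqref{eq2.8}), producing the energy, the initial data, the $\hbar'$ remainder, and the dissipation $\tfrac12\|\hbar^{1/2}e^\Psi\Delta_k^\h\p_yG_\Phi\|_{L^2_t(L^2_+)}^2$. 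The new linear term $\w{t}^{-1}G$ combines with Poincar\'e (Lemma \ref{lem2.1}): half of the dissipation absorbs the $2(\p_y\Psi)^2$ commutator, while the other half, when combined with $\w{t}^{-1}\|e^\Psi\Delta_k^\h G_\Phi\|_{L^2_+}^2$, produces the claimed $\|\w{t'}^{-1/2}\hbar^{1/2}e^\Psi\Delta_k^\h G_\Phi\|_{L^2_t(L^2_+)}^2$ term on the left. The gauge term $\lam\dot\theta|D_\h|$ yields the $c\lam 2^k\int_0^t\dot\theta\|\hbar^{1/2}e^\Psi\Delta_k^\h G_\Phi\|^2\,dt'$ contribution via Lemma \ref{lem:Bern}.

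It remains to estimate the four nonlinear/forcing pieces. For $(u+u^s+\e f\chi)\p_xG$, the $u^s$ and $\e f\chi$ parts vanish by integration by parts in $x$ (as in \eqref{S4eq9b}); the $u\p_xG$ piece is handled by Bony's decomposition in $x$ exactly as in Lemma \ref{S4lem2}, with $G$ playing the role of $\vf$ and using \eqref{S2eq20} of Lemma \ref{S0lem1} to bound $\|S_{k'-1}^\h u_\Phi\|_{L^\infty_+}\lesssim\dot\theta(t)$. The transport term $v\p_yG$ is the direct analogue of $v\p_yu$ handled in Lemma \ref{S5lem1}: use \eqref{ZPq} to bound $\|e^{\Psi/4}\Delta_k^\h v_\Phi\|_{L^\infty_\v(L^2_\h)}$ in terms of $\|e^\Psi\Delta_k^\h u_\Phi\|_{L^2_+}$, then use \eqref{S2eq21}--\eqref{S2eq20} to pass to norms of $G_\Phi$ and $\p_yG_\Phi$, and conclude by the definition of $\dot\theta$. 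The forcing term $v\p_y(u^s+\e f\chi)$ is controlled as in \eqref{4.6}: the profile norms $\|e^{3\Psi/4}\p_yu^s\|_{L^2_\v}$ and $\e f(t)\|e^\Psi\chi'\|_{L^2_\v}$ are precisely the first two building blocks of $\dot\theta$ in \eqref{1.9}. The last term $\frac{y}{\w{t}}\int_y^\infty \p_y(u+u^s+\e f\chi)\p_x\vf\,dy'$ is treated as in Lemma \ref{S4lem3} and estimate \eqref{S4eq9a}: the factor $y/\w{t}$ is absorbed by the Gaussian weight split $e^\Psi=e^{-3\Psi/4}\cdot e^{7\Psi/4}$ using $\|ye^{-3\Psi/4}\|_{L^2_\v}\lesssim \w{t}^{3/4}$, and the inner $y'$-integral is bounded as in \eqref{S4eq3}.

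The genuinely new piece is the coupling term $-\tfrac12\w{t}^{-1}v\p_y(y\vf)$, and this is where Lemma \ref{S0lem1}, in particular \eqref{S7eq21}, is indispensable. I would run Bony's decomposition in $x$ on $v\p_y(y\vf)$, putting $v$ in $L^\infty_\v(L^2_\h)$ via \eqref{ZPq} and $\p_y(y\vf)_\Phi$ in $L^2_+$ or $L^2_\v(L^\infty_\h)$; the weight $\w{t}^{-1}$ then combines with \eqref{S7eq21} (picking $\gamma\in(\tfrac12,1)$ so that $e^{\gamma\Psi}e^{(1-\gamma)\Psi/4}$ absorbs nicely) to convert $\w{t}^{-1}\|e^{\gamma\Psi}\Delta_{k'}^\h\p_y(y\vf)_\Phi\|\lesssim \|e^\Psi\Delta_{k'}^\h\p_yG_\Phi\|$, which via \eqref{1.9} is exactly $\dot\theta$-integrable. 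This is the main obstacle: without the precise $\w{t}$-weighted estimate \eqref{S7eq21}, the term would not close.

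Collecting every contribution as $d_k^2 2^{-k}\|\hbar^{1/2}e^\Psi G_\Phi\|_{\wt L^2_{t,\dot\theta(t)}(\cB^{1,0})}^2$ and noting that the $\lam$-term on the left is, after summation in $k$ with weight $2^{k/2}$, exactly $\lam\|\hbar^{1/2}e^\Psi G_\Phi\|_{\wt L^2_{t,\dot\theta}(\cB^{1,0})}^2$, one picks $\lam$ large enough to absorb the remainder $C\|\hbar^{1/2}e^\Psi G_\Phi\|_{\wt L^2_{t,\dot\theta}(\cB^{1,0})}^2$ into the left, yielding \eqref{S7eq17}.
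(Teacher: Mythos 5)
Your proposal follows the paper's proof essentially step for step: the same weighted energy identity \eqref{S7eq7}, the convection term handled through Lemma \ref{S4lem2}, the transport term $v\p_yG$ as in Lemma \ref{S5lem1} via \eqref{ZPq} and \eqref{S2eq21}--\eqref{S2eq20}, the integral term as in Lemma \ref{S4lem3}, and --- correctly singled out as the genuinely new point --- the coupling term $\w{t}^{-1}v\p_y(y\vf)$ closed by means of \eqref{S7eq21}.

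The one place where your description goes astray is the origin of the term $\|\w{t'}^{-\f12}\hbar^{\f12}e^\Psi\D_k^\h G_\Phi\|_{L^2_t(L^2)}^2$. No Poincar\'e inequality is needed here: the damping term $\w{t}^{-1}G$ in \eqref{S7eq6} contributes $\w{t'}^{-1}\|\hbar^{\f12}e^\Psi\D_k^\h G_\Phi\|_{L^2_+}^2$ to the energy identity, which integrates directly to the claimed term with coefficient one, while the full remaining half of the dissipation stays on the left. If, as you write, you instead spend that remaining half of the dissipation through Lemma \ref{lem2.1} to manufacture the $\w{t'}^{-\f12}$ term, you lose the summand $\f12\|\hbar^{\f12}e^\Psi\D_k^\h\p_yG_\Phi\|_{L^2_t(L^2)}^2$ from the left-hand side of \eqref{S7eq17}; that summand is indispensable, since it is what later yields the $\w{t'}^{\f54}$-weighted bound on $\p_yG_\Phi$ and the finiteness of $\int_0^t\w{t'}^{\f14}\|e^\Psi\p_yG_\Phi\|_{\cB^{\f12,0}}\,dt'$. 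In the paper, Lemma \ref{lem2.1} is only invoked afterwards, in the passage from \eqref{S7eq17} to the decay estimates of Proposition \ref{S7prop2}, and likewise the absorption of the remainder by choosing $\lam$ large is deferred to that stage; \eqref{S7eq17} itself is a fixed-$k$ inequality with the $c\lam 2^k$ term simply kept on the left.
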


\begin{proof} In view of \eqref{S7eq6}, we get, by a similar derivation of \eqref{S4eq2}, that\beq \label{S7eq7}
\begin{split}
&\hbar(t)\bigl(e^\Psi\D_k^{\rm h}\left(\p_tG_\Phi-\p_{yy}G_\Phi+\w{t}^{-1}G_\Phi\right)\ |\ e^\Psi\D_k^{\rm
h}G_\Phi\bigr)_{L^2_+}\\
&+\lam\dot{\tht}(t)\hbar(t)\bigl(e^\Psi|D_h|\D_k^{\rm
h}G_\Phi\ |\ e^\Psi\D_k^\h  G_\Phi\bigr)_{L^2_+}+\hbar(t)\bigl(e^\Psi|D_h|\D_k^{\rm
h}[v\p_yG]_\Phi\ |\ e^\Psi\D_k^\h  G_\Phi\bigr)_{L^2_+}\\
&+\hbar(t)\bigl(e^\Psi  \D_k^{\rm h}\bigl[\left(u+u^s+\e f(t)\chi(y)\right)\p_xG\bigr]_\Phi
 \ |\ e^\Psi\D_k^{\rm
h} G_\Phi\bigr)_{L^2_+}\\
&+\hbar(t)\bigl(e^\Psi\D_k^{\rm h}\bigl[\p_y\left(u^s+\e f(t)\chi(y)\right)v -\f12\w{t}^{-1}v\p_y({y\vf})
\bigr]_\Phi\ |\ e^\Psi\D_k^{\rm
h}G_\Phi\bigr)_{L^2_+}\\
&+\w{t}^{-1}\hbar(t)\bigl(e^\Psi y\int_y^\infty \D_k^{\rm h}\bigl[\p_y\left(u+u^s+\e f(t)\chi(y')\right)\p_x\vf\bigr]_\Phi
\,dy' \ |\ e^\Psi\D_k^{\rm
h} G_\Phi\bigr)_{L^2_+}
=0.
\end{split}
\eeq
 In what follows, we shall always assume that $t<T^\ast$ with
$T^\ast$ being determined by \eqref{1.8a} so that by
virtue of \eqref{eq2.6}, for any $t<T^\ast,$ there holds the
 convex inequality \eqref{1.8bb}.

 \smallskip

Next let us handle term by term in \eqref{S7eq7}. \vspace{0.2cm}

Due to $G|_{y=0}=0,$ it follows from a similar proof of Lemma \ref{S4lem1}
that
\beq
\label{S7eq7ad}
\begin{split}
\int_{0}^t&\hbar(t')\bigl(e^\Psi\D_k^{\rm h}\left(\p_tG_\Phi-\p_{yy}G_\Phi\right)\ |\ e^\Psi\D_k^{\rm
h}G_\Phi\bigr)_{L^2_+}\,dt'\\
\geq & \f12\Bigl(\|\hbar^{\f12}e^\Psi\D_k^{\rm
h}G_\Phi(t)\|_{L^2_+}^2-\|\hbar^{\f12}e^{\Psi}\D_k^{\rm
h}G_\Phi(0)\|_{L^2_+}^2\\
&\quad-\int_{0}^t\hbar'(t')\|e^\Psi\D_k^{\rm
h}G_\Phi(t')\|_{L^2_+}^2\,dt'+\|e^\Psi\D_k^{\rm
h}\p_yG_\Phi\|_{L^2_t(L^2_+)}^2\Bigr).
  \end{split} \eeq

Whereas by applying Lemma \ref{S4lem2}, we find
\beq
\label{S7eq10}
\begin{split}
\int_{0}^t\hbar(t')\bigl|\bigl(e^\Psi\D_k^{\rm
h}[\left(u+u^s+\e f(t)\chi(y)\right)\p_xG]_\Phi\ |&\ e^\Psi\D_k^{\rm h}G_\Phi\bigr)_{L^2_+}\bigr|\,dt'\\
&\lesssim  d_k^22^{-k}\|\hbar^{\f12}e^\Psi
G_\Phi\|_{\wt{L}^2_{t,\dot{\tht}(t)}(\cB^{1,0})}^2. \end{split} \eeq

On the other hand, we observe from  the proof of\eqref{ZPq} that
\beno
\bigl\|e^{\f\Psi2}{\D}_{k}^{\rm h}v_\Phi(t')\bigr\|_{L^\infty_{\rm v}(L^2_{\rm h})}\lesssim
 2^{k'}\w{t'}^{\f14}\|e^{\f78\Psi}{\D}_{k}^{\rm h}
u_\Phi(t')\|_{L^2_+},
\eeno
which together with \eqref{S2eq21} implies that
\beq \label{S7eq11}
\bigl\|e^{\f\Psi2}{\D}_{k}^{\rm h}v_\Phi(t')\bigr\|_{L^\infty_{\rm v}(L^2_{\rm h})}
\lesssim \w{t}^{\f14}2^k\|e^\Psi \D_k^\h G_\Phi(t)\|_{L^2_+}.
\eeq
As a result, it comes out
\beno
\begin{split}
\int_0^t&\hbar(t')\bigl|\bigl(e^\Psi\p_y\left(u^s+\e f(t)\chi(y)\right)\D_k^{\rm h}v_\Phi\ |\ e^\Psi\D_k^{\rm
h}G_\Phi\bigr)_{L^2_+}\bigr|\,dt'\\
\lesssim &\int_0^t\hbar(t')\|e^{\f34\Psi}\p_y\left(u^s+\e f(t)\chi(y)\right)\|_{L^2_{\rm v}}\|e^{\f\Psi4}\D_k^{\rm
h} v_\Phi(t')\|_{L^\infty_{\rm v}(L^2_\h)}\|e^\Psi\D_k^{\rm h} G_\Phi(t')\|_{L^2_+}\,dt'\\
\lesssim &2^{k}\int_0^t\hbar(t')\w{t'}^{\f14}\|e^{\f34\Psi}\p_y\left(u^s+\e f(t)\chi(y)\right)\|_{L^2_{\rm v}}\|e^{\Psi}\D_k^{\rm
h} G_\Phi(t')\|_{L^2_+}^2\,dt'.
\end{split}
\eeno
This together with \eqref{1.9}, \eqref{S2eq13a} and Definition \ref{def1.1} ensures that
\beq \label{S7eq14po}
\begin{split}
\int_0^t&\hbar(t')\bigl|\bigl(e^\Psi\p_y\left(u^s+\e f(t)\chi(y)\right)\D_k^{\rm h}v_\Phi\ |\ e^\Psi\D_k^{\rm
h}G_\Phi\bigr)_{L^2_+}\bigr|\,dt'\\
\lesssim &2^k\int_0^t\dot{\theta}(t')\|\sqrt{\hbar}e^\Psi\D_k^{\rm
h} G_\Phi(t')\|_{L^2_+}^2\,dt'\\
\lesssim & d_k^22^{-{k}}\|\hbar^{\f12}e^\Psi
G_\Phi\|_{\wt{L}^2_{t,\dot{\tht}(t)}(\cB^{1,0})}^2.
\end{split}
\eeq

The estimate of the remaining terms in \eqref{S7eq7} relies on the following lemmas:

\begin{lem}\label{S7lem1}
{\sl For any $t<T^\ast,$ there holds
\beq \label{S7eq12}
\int_{0}^t\hbar(t')\bigl|\bigl(e^\Psi\D_k^{\rm h}\bigl[v \p_y G\bigr]_\Phi\ |\ e^\Psi\D_k^{\rm
h}G_\Phi\bigr)_{L^2_+}\bigr|\,dt'\lesssim  d_k^22^{-{k}}\|\hbar^{\f12}e^\Psi
G_\Phi\|_{\wt{L}^2_{t,\dot{\tht}(t)}(\cB^{1,0})}^2. \eeq}
\end{lem}

\begin{lem}\label{S7lem3}
{\sl For any $t<T^\ast,$ there holds
\beq \label{S7eq14}
\int_0^t\hbar(t')\w{t'}^{-1}\bigl|\bigl(e^\Psi \D_k^{\rm h}[v\p_y({y\vf})]_\Phi
 \ |\ e^\Psi\D_k^{\rm
h} G_\Phi\bigr)_{L^2_+}\bigr|\,dt'\lesssim  d_k^22^{-{k}}\|\hbar^{\f12}e^\Psi
G_\Phi\|_{\wt{L}^2_{t,\dot{\tht}(t)}(\cB^{1,0})}^2. \eeq
}
\end{lem}

\begin{lem}\label{S7lem2}
{\sl For any $t<T^\ast,$ there holds
\beq \label{S7eq13a}
\begin{split}
\int_{0}^t\w{t'}^{-1}\hbar(t')\bigl|\bigl(e^\Psi y\D_k^{\rm h}\bigl[\int_y^\infty \D_k^{\rm h}[\p_y&\left(u+u^s+\e f(t')\chi(y')\right)v]_\Phi
\,dy' \ |\ e^\Psi\D_k^{\rm
h}G_\Phi\bigr)_{L^2_+}\bigr|\,dt'\\
&\quad\qquad\qquad\qquad\quad\lesssim  d_k^22^{-{k}}\|\hbar^{\f12}e^\Psi
G_\Phi\|_{\wt{L}^2_{t,\dot{\tht}(t)}(\cB^{1,0})}^2. \end{split} \eeq}
\end{lem}

The proof of the above lemmas involves tedious calculations,  which we shall postpone
in the Appendix \ref{app}.

Now we admit  Lemmas \ref{S7lem1}-\ref{S7lem2} for the time being and continue the proof of Proposition \ref{S7prop1}.

As a matter of fact,
by integrating \eqref{S7eq7} over $[0,t]$ and then inserting the estimates (\ref{S7eq7ad}-\ref{S7eq13a})  into the resulting inequality,
 we achieve \eqref{S7eq17}. This completes the proof of Proposition \ref{S7prop1}.
\end{proof}

Now we  present the proof of Proposition \ref{S7prop2}.

\begin{proof}[Proof of Proposition \ref{S7prop2}]
It follows from  Lemma \ref{lem2.1} that
\beno
\f12\|\w{t'}^{-\f12}\hbar^{\f12}e^\Psi
\D_k^h G_\Phi\|_{{L}^2_{t}(L^2)}^2\leq \|\hbar^{\f12}e^\Psi
\D_k^h \p_yG_\Phi\|_{{L}^2_{t}(L^2)}^2.
\eeno
Inserting the above inequality into \eqref{S7eq17} and taking $\hbar(t)=\w{t}^{\f52}$ in the resulting inequality,
we find for any $t<T^\ast$
\beno
\begin{split}
2^k\|\w{t'}^{\f54}e^\Psi \D_k^\h
G_\Phi&\|_{{L}^\infty_{t}(L^2)}^2+c\lam 2^{2k}\int_0^t\dot{\theta}(t')\|\w{t'}^{\f54}e^\Psi
\D_k^\h G_\Phi(t')\|_{L^2}^2\,dt'\\
&\leq  2^k\|e^\Psi
\D_k^\h G_\Phi(0)\|_{L^2}^2+Cd_{k}^2
\|\w{t'}^{\f54}e^\Psi \D_k^\h
G_\Phi\|_{\wt{L}^2_{t,\dot{\tht}(t)}(\cB^{1,0})}^2.
\end{split}
\eeno
Taking square root of the above inequalities and then summing up the resulting ones gives rise
\beq \label{S7eq15be}
\begin{split}
\|\w{t'}^{\f54}e^\Psi
G_\Phi&\|_{\wt{L}^\infty_{t}(B^{\f12,0})}+\sqrt{c\lam}\|\w{t'}^{\f54} e^\Psi
G_\Phi\|_{\wt{L}^2_{t,\dot{\tht}(t)}(\cB^{1,0})}\\
&\leq  \|e^{\f{y^2}8}e^{\delta |D_\h|}
G_0\|_{\cB^{\f12,0}}+C\|\w{t'}^{\f54} e^\Psi
G_\Phi\|_{\wt{L}^2_{t,\dot{\tht}(t)}(\cB^{1,0})}.
\end{split}
\eeq
In particular, taking $\lam$ in \eqref{S7eq15be} so large that $c\lam\geq C,$ we achieve
\beq \label{S7eq15}
\|\w{t'}^{\f54}e^\Psi
G_\Phi\|_{\wt{L}^\infty_{t}(B^{\f12,0})} \leq  \|e^{\f{y^2}8}e^{\delta |D_\h|}
G_0\|_{\cB^{\f12,0}}\quad\mbox{ for any}\quad t<T^\ast.
\eeq

While by taking $\hbar(t)=1$ in \eqref{S7eq17}, we get, by a similar derivation of \eqref{S7eq15be}, that
\beno
\begin{split}
\sqrt{c\lam}\| e^\Psi
G_\Phi\|_{\wt{L}^2_{t,\dot{\tht}(t)}(\cB^{1,0})}+\|e^\Psi
\p_y G_\Phi\|_{\wt{L}^2_{t}(B^{\f12,0})}
\leq  \|e^{\f{y^2}8}e^{\delta |D_\h|}
G_0\|_{\cB^{\f12,0}}+C\| e^\Psi
G_\Phi\|_{\wt{L}^2_{t,\dot{\tht}(t)}(\cB^{1,0})}.
\end{split}
\eeno
By taking $\lam$  so large that $c\lam\geq C$ in the above inequality,  we obtain
\beq \label{S7eq15af}
\|e^\Psi
\p_y G_\Phi\|_{\wt{L}^2_{t}(B^{\f12,0})} \leq  \|e^{\f{y^2}8}e^{\delta |D_\h|}
G_0\|_{\cB^{\f12,0}} \quad\mbox{ for any}\quad t<T^\ast.
\eeq

 On the other hand, exactly along the same line to the proof of \eqref{S7eq17}, for any $t\in (0,T^\ast),$ we can show that
\beno
\begin{split}
\f12&\|\w{t'}^{\f54}e^\Psi \D_k^\h
G_\Phi(t)\|_{L^2}^2+\f12\int_{\f{t}2}^t\|\w{t'}^{\f54}e^\Psi \D_k^\h
\p_yG_\Phi\|_{L^2}^2\,dt'\\
&+c\lam 2^k\int_{\f{t}2}^t\dot{\theta}(t')\|\w{t'}^{\f54}e^\Psi \D_k^\h
G_\Phi(t')\|_{L^2}^2\,dt'\\
\leq & \f12\|\w{t/2}^{\f54}e^\Psi \D_k^\h
G_\Phi(t/2)\|_{L^2}^2+\f14\int_{\f{t}2}^t\w{t'}^{\f32}\|e^\Psi \D_k^\h
G_\Phi(t')\|_{L^2}^2\,dt'\\
&\qquad\qquad\qquad\qquad\qquad\qquad+Cd_{k}^2
2^{-k}\|\w{t'}^{\f54}e^\Psi
G_\Phi\|_{\wt{L}^2_{t/2,t;\dot{\tht}(t)}(\cB^{1,0})}^2,
\end{split}
\eeno
from which and \eqref{S7eq15}, we get, by a similar derivation of \eqref{S7eq15be}, that
\beno
\begin{split}
\|\w{t}^{\f54}&e^\Psi
G_\Phi(t)\|_{B^{\f12,0}}+\sqrt{2c\lam}\|\w{t'}^{\f54} e^\Psi
G_\Phi\|_{\wt{L}^2_{t/2,t;\dot{\tht}(t)}(\cB^{1,0})}+\|\w{t'}^{\f54} e^\Psi \p_yG_\Phi\|_{\wt{L}^2(t/2,t;\cB^{\f12,0})}\\
&\leq  \|e^{\f{y^2}8}e^{\delta |D_\h|}
G(0)\|_{\cB^{\f12,0}}+\|\w{t'}^{\f34}e^\Psi G_\Phi\|_{\wt{L}^2(t/2,t;\cB^{\f12,0})}+\sqrt{C}\|\w{t'}^{\f54} e^\Psi
G_\Phi\|_{\wt{L}^2_{t/2,t;\dot{\tht}(t)}(\cB^{1,0})}.
\end{split}
\eeno
Yet it follows from \eqref{S7eq15} that for any $t\in (0,T^\ast)$
\beno
\|\w{t'}^{\f34}e^\Psi G_\Phi\|_{\wt{L}^2(t/2,t;\cB^{\f12,0})}\lesssim \|\w{t'}^{\f54}e^\Psi
G_\Phi\|_{\wt{L}^\infty_{t}(B^{\f12,0})}\lesssim \|e^{\f{y^2}8}e^{\delta |D_\h|}
G_0\|_{\cB^{\f12,0}}.
\eeno
As a consequence, as long as $c\lam\geq C,$ we arrive at
\beq \label{S7eq18}
\|\w{t'}^{\f54}\p_yG_\Phi\|_{\wt{L}^2(t/2,t;\cB^{\f12,0})}\lesssim \|e^{\f{y^2}8}e^{\delta |D_\h|}
G_0\|_{\cB^{\f12,0}} \quad\mbox{ for any}\quad t<T^\ast. \eeq

With \eqref{S7eq15} and \eqref{S7eq18}, to finish the proof of \eqref{S7eq24},
it remains to show  that for any $t<T^\ast,$
\beq \label{S5eq6}
\int_0^t\w{t'}^{\f14}\bigl\|e^\Psi\p_yG_\Phi(t')\bigr\|_{\cB^{\f12,0}}\,dt'\leq C\|e^{\f{y^2}8}e^{\delta |D_\h|}
G_0\|_{\cB^{\f12,0}}.
\eeq
Indeed for any $t<T^\ast$ and $t>1,$ there exists a unique integer $N_t$ so that $2^{N_t-1}<t\leq 2^{N_t}.$ Then
we have $\f{t}2\leq 2^{N_t-1},$ so that there holds
\beno
\begin{split}
\int_{2^{N_t-1}}^t\w{t'}^{\f14}\bigl\|e^\Psi\p_yG_\Phi(t')\bigr\|_{\cB^{\f12,0}}\,dt'
\leq &\Bigl( \int_{2^{N_t-1}}^t\w{t'}^{-2}\,dt'\Bigr)^{\f12}  \\ &\times \Bigl(\int_{{t}/2}^t\bigl(\w{t'}^{\f54}\bigl\|e^\Psi\p_yG_\Phi(t')\bigr\|_{\cB^{\f12,0}}\bigr)^2\,dt'\Bigr)^{\f12}\\
\leq & C2^{-\f{N_t}2}\|\w{t'}^{\f54}\p_yG_\Phi\|_{\wt{L}^2(t/2,t;\cB^{\f12,0})}.
\end{split}
\eeno

Along the same line for any $j\in [0, N_t-2],$ we find
\beno
\begin{split}
\int_{2^{j}}^{2^{j+1}}\w{t'}^{\f14}\bigl\|e^\Psi\p_yu_\Phi(t')\bigr\|_{\cB^{\f12,0}}\,dt'
\leq &\Bigl( \int^{2^{j+1}}_{2^j}\w{t'}^{-2}\,dt'\Bigr)^{\f12}  \\ &\times \Bigl(\int^{2^{j+1}}_{2^j}\bigl(\w{t'}^{\f54}\bigl\|e^\Psi\p_yu_\Phi(t')\bigr\|_{\cB^{\f12,0}}\bigr)^2\,dt'\Bigr)^{\f12}\\
\leq & C 2^{-\f{j}2} \|\w{t'}^{\f54}\p_yG_\Phi\|_{\wt{L}^2(2^j,2^{j+1};\cB^{\f12,0})}
\end{split}
\eeno
As a consequence,   we deduce from \eqref{S7eq15af}, \eqref{S7eq18} and the above inequalities that
\beno
\begin{split}
\int_0^t&\w{t'}^{\f14}\bigl\|e^\Psi\p_yG_\Phi(t')\bigr\|_{\cB^{\f12,0}}\,dt'\leq   \|e^\Psi
\p_y G_\Phi\|_{{L}^2(0,1; B^{\f12,0})}
\\
&+\sum_{j=0}^{N_t-2}\int_{2^{j}}^{2^{j+1}}\w{t'}^{\f14}\bigl\|e^\Psi\p_yG_\Phi(t')\bigr\|_{\cB^{\f12,0}}\,dt'+\int_{2^{N_t-1}}^t\w{t'}^{\f14}\bigl\|e^\Psi\p_yG_\Phi(t')\bigr\|_{\cB^{\f12,0}}\,dt'\\
\leq &C\|e^{\f{y^2}8}e^{\delta |D_\h|}
G_0\|_{\cB^{\f12,0}}\Bigl(1+\sum_{j=0}^\infty 2^{-\f{j}2}\Bigr)\leq C\|e^{\f{y^2}8}e^{\delta |D_\h|}
G_0\|_{\cB^{\f12,0}}.
\end{split}
\eeno
This leads to \eqref{S5eq6}. We thus complete the proof of Proposition \ref{S7prop2}.
\end{proof}

\appendix

\renewcommand{\theequation}{\thesection.\arabic{equation}}
\setcounter{equation}{0}
\section{The proof of Lemmas \ref{S7lem1}-\ref{S7lem2}}\label{app}

In this appendix, we shall present the proof of Lemmas \ref{S7lem1}-\ref{S7lem2}.

\begin{proof}[Proof of Lemma \ref{S7lem1}] We first get, by applying Bony's decomposition \eqref{Bony} in the horizontal
variable  to $v\p_y G,$  that
\beno v\p_yG=T^{\rm h}_v\p_y G+T^{\rm h}_{\p_yG }v+R^{\rm h}(v,\p_y G). \eeno
Considering
\eqref{1.8bb} and the support properties to the Fourier transform of
the terms in $T^{\rm h}_v\p_y G,$ we write
\beno
\begin{split}
\int_{0}^t&\hbar(t')\bigl|\bigl(e^\Psi\D_k^{\rm h}\bigl[T^{\rm
h}_v \p_y G\bigr]_\Phi\ |\ e^\Psi\D_k^{\rm
h}G_\Phi\bigr)_{L^2_+}\bigr|\,dt'\\
\lesssim & \sum_{|k'-k|\leq
4}\int_{0}^t\| \hbar^{\f12} S_{k'-1}^{\rm
h}v_\Phi(t')\|_{L^\infty_+}\|e^{\Psi}\D_{k'}^{\rm h}\p_y
G_\Phi(t')\|_{L^2_+}\|\hbar^{\f12} e^{\Psi}\D_{k}^{\rm
h}G_\Phi(t')\|_{L^2_+}\,dt'\\
\lesssim & \sum_{|k'-k|\leq
4} 2^{-\f{k'}2}\int_{0}^t\| \hbar^{\f12} S_{k'-1}^{\rm
h}v_\Phi(t')\|_{L^\infty_+}\|e^{\Psi}\p_y
G_\Phi(t')\|_{\cB^{\f12,0}}\|\hbar^{\f12} e^{\Psi}\D_{k}^{\rm
h}G_\Phi(t')\|_{L^2_+}\,dt'.
\end{split}
\eeno
Then in view of \eqref{1.9}, by applying H\"older's inequality, we find
\beno
\begin{split}
\int_{0}^t\hbar(t')&\bigl|\bigl(e^\Psi\D_k^{\rm h}\bigl[T^{\rm
h}_v \p_y G\bigr]_\Phi\ |\ e^\Psi\D_k^{\rm
h}G_\Phi\bigr)_{L^2_+}\bigr|\,dt'\\
\lesssim & \sum_{|k'-k|\leq
4} 2^{-\f{k'}2}\Bigl(\int_{0}^t\dot{\theta}(t')\w{t'}^{-\f12}\| \hbar^{\f12} S_{k'-1}^{\rm
h}v_\Phi(t')\|_{L^\infty_+}^2\,dt'\Bigr)^{\f12}\\
&\qquad\qquad\qquad\qquad\qquad\times\Bigl(\int_{0}^t\dot{\theta}(t')\|\hbar^{\f12} e^{\Psi}\D_{k}^{\rm
h}G_\Phi(t')\|_{L^2_+}^2\,dt'\Bigr)^{\f12}.
\end{split}
\eeno
Yet  in view of \eqref{S7eq11}, we get, by a similar derivation of  \eqref{S4eq8}, that
\beq \label{S7eq11n}
\Bigl(\int_{0}^t\dot{\theta}(t')\w{t'}^{-\f12}\| \hbar^{\f12} S_{k'-1}^{\rm
h}v_\Phi(t')\|_{L^\infty_+}^2\,dt'\Bigr)^{\f12}\lesssim  d_{k'}2^{\f{k'}2}\|\hbar^{\f12}e^\Psi
G_\Phi\|_{\wt{L}^2_{t,\dot{\tht}(t)}(\cB^{1,0})}.
\eeq
As a result, we deduce from Definition \ref{def1.1} that
\beno
\int_{0}^t&\hbar(t')\bigl|\bigl(e^\Psi\D_k^{\rm h}\bigl[T^{\rm
h}_v \p_y G\bigr]_\Phi\ |\ e^\Psi\D_k^{\rm
h}G_\Phi\bigr)_{L^2_+}\bigr|\,dt'\lesssim  d_k^22^{-{k}}\|\hbar^{\f12}e^\Psi
G_\Phi\|_{\wt{L}^2_{t,\dot{\tht}(t)}(\cB^{1,0})}^2.
\eeno

Along the same line, we de deduce from \eqref{1.9} and \eqref{S7eq11} that
\beno
\begin{split}
\int_{0}^t&\hbar(t')\bigl|\bigl(e^\Psi\D_k^{\rm h}\bigl[T^{\rm
h}_{\p_y G}v\bigr]_\Phi\ |\ e^\Psi\D_k^{\rm
h}G_\Phi\bigr)_{L^2_+}\bigr|\,dt'\\
\lesssim & \sum_{|k'-k|\leq
4}\int_{0}^t\| e^\Psi S_{k'-1}^{\rm
h}\p_yG_\Phi(t')\|_{L^2_{\rm v}(L^\infty_\h)}\|\hbar^{\f12} \D_{k'}^{\rm h}v_\Phi(t')\|_{L^\infty_{\rm v}(L^2_\h)}\|\hbar^{\f12} e^{\Psi}\D_{k}^{\rm
h}G_\Phi(t')\|_{L^2_+}\,dt'\\
\lesssim & \sum_{|k'-k|\leq 4}2^{k'}\int_{0}^t\dot\theta(t')\| \hbar^{\f12} e^\Psi\D_{k'}^{\rm h}G_\Phi(t')\|_{L^2_+}\|\hbar^{\f12} e^{\Psi}\D_{k}^{\rm
h}G_\Phi(t')\|_{L^2_+}\,dt',
\end{split}
\eeno
Then a similar derivation of \eqref{S4eq4} yields
\beno
\int_{0}^t\hbar(t')\bigl|\bigl(e^\Psi\D_k^{\rm h}\bigl[T^{\rm
h}_{\p_y G}v\bigr]_\Phi\ |\ e^\Psi\D_k^{\rm
h}G_\Phi\bigr)_{L^2_+}\bigr|\,dt'\lesssim  d_k^22^{-{k}}\|\hbar^{\f12}e^\Psi
G_\Phi\|_{\wt{L}^2_{t,\dot{\tht}(t)}(\cB^{1,0})}^2.
\eeno

Finally again due to \eqref{S7eq11} and the support properties to the
Fourier transform of the terms in $R^{\rm h}(v,\p_yG),$ we get, by applying Lemma \ref{lem:Bern}, that
\beno
\begin{split}
&\int_{0}^t\hbar(t')\bigl|\bigl(e^\Psi\D_k^{\rm h}\bigl[R^{\rm h}(v,\p_yG
)\bigr]_\Phi\ |\ e^\Psi\D_k^{\rm
h}G_\Phi\bigr)_{L^2_+}\bigr|\,dt'\\
&\lesssim 2^{{\f{k}2}}\sum_{k'\geq
k-3}\int_{0}^t\|\hbar^{\f12}{\D}^{\rm h}_{k'}v_\Phi(t')\|_{L^\infty_{\rm
v}(L^2_{\rm h})}\|e^{\Psi}\wt{\D}_{k'}^{\rm
h}\p_yG_\Phi(t')\|_{L^2_+}\|\hbar^{\f12}e^{\Psi}\D_{k}^{\rm
h}G_\Phi(t')\|_{L^2_+}\,dt'\\
&\lesssim 2^{{\f{k}2}}\sum_{k'\geq
k-3}2^{\frac{k'}2}\int_{0}^t\dot{\theta}(t')\|\hbar^{\f12} e^{\Psi}\D_{k'}^{\rm
h}G_\Phi(t')\|_{L^2_+}\|\hbar^{\f12}e^{\Psi}\D_{k}^{\rm
h}G_\Phi(t')\|_{L^2_+}\,dt',
 \end{split} \eeno
 from which, we get, by a similar derivation of \eqref{S4eq6}, that
 \beno
 \int_{0}^t\hbar(t')\bigl|\bigl(e^\Psi\D_k^{\rm h}\bigl[R^{\rm h}(v,\p_yG
)\bigr]_\Phi\ |\ e^\Psi\D_k^{\rm
h}G_\Phi\bigr)_{L^2_+}\bigr|\,dt'\lesssim  d_k^22^{-{k}}\|\hbar^{\f12}e^\Psi
G_\Phi\|_{\wt{L}^2_{t,\dot{\tht}(t)}(\cB^{1,0})}^2.
\eeno

Summing up the above estimates gives rise to \eqref{S7eq12}. This finishes the proof of Lemma \ref{S7lem1}.
\end{proof}

\begin{proof}[Proof of Lemma \ref{S7lem3}]
Applying Bony's decomposition \eqref{Bony} in the horizontal to $v\p_y({y\vf})$ yields
\beno
v\p_y({y\vf})=T^\h_v\p_y({y\vf})+T^\h_{\p_y({y\vf})}v+R^\h(v,\p_y({y\vf})).
\eeno
In view of \eqref{1.8bb} and \eqref{S7eq21}, we infer
\beno
\begin{split}
 \int_0^t&\hbar(t')\w{t'}^{-1}
\bigl|\bigl(e^\Psi\D_k^\h[T^\h_v\p_y({y\vf})]_\Phi\ |\ e^\Psi\D_k^{\rm
h} G_\Phi\bigr)_{L^2_+}\bigr|\,dt'\\
\lesssim & \sum_{|k'-k|\leq 4}\int_0^t\hbar(t')\w{t'}^{-1}\|e^{\f\Psi2}S_{k'-1}^\h v_\Phi(t')\|_{L^\infty_+}\|e^{\f\Psi2}\D_{k'}^\h \p_y({y\vf}_\Phi)(t')\|_{L^2_+}\|e^\Psi\D_k^\h G_\Phi(t')\|_{L^2_+}\,dt'\\
\lesssim & \sum_{|k'-k|\leq 4}\int_0^t\hbar(t')\|e^{\f\Psi2}S_{k'-1}^\h v_\Phi(t')\|_{L^\infty_+}
\|e^\Psi\D_{k'}^\h \p_yG_\Phi(t')\|_{L^2_+}
\|e^\Psi\D_k^\h G_\Phi(t')\|_{L^2_+}\,dt'\\
\lesssim & \sum_{|k'-k|\leq 4}2^{-\f{k'}2}\Bigl(\int_0^t\dot{\theta}(t')\w{t'}^{-\f12}\|\hbar^{\f12} e^{\f\Psi2} S_{k'-1}^\h v_\Phi(t')\|_{L^\infty_+}^2\,dt'\Bigr)^{\f12}\\
&\qquad\qquad\qquad\qquad\qquad\qquad\qquad\qquad\times\Bigl(\int_0^t\dot{\theta}
\|\hbar^{\f12} e^\Psi \D_k^\h G_\Phi(t')\|_{L^2_+}^2\,dt'\Bigr)^{\f12},
\end{split}
\eeno
which together with  Definition \ref{def1.1} and \eqref{S7eq11n} ensures that
\beno
\begin{split}
  \int_0^t&\hbar(t')\w{t'}^{-1}
\bigl|\bigl(e^\Psi\D_k^\h[T^\h_v\p_y({y\vf})]_\Phi\ | \ e^\Psi\D_k^{\rm
h} G_\Phi\bigr)_{L^2_+}\bigr|\,dt'
\lesssim d_k^22^{-k}\|\hbar^{\f12}e^\Psi
G_\Phi\|_{\wt{L}^2_{t,\dot{\tht}(t)}(\cB^{1,0})}^2.
\end{split}
\eeno

Similarly,  by virtue of  \eqref{S7eq21} and \eqref{S7eq11}, we have
\beno
\begin{split}
 \int_0^t&\hbar(t')\w{t'}^{-1}
\bigl|\bigl(e^\Psi\D_k^\h[T^\h_{\p_y({y\vf})}v]_\Phi\ |\ e^\Psi\D_k^{\rm
h} G_\Phi\bigr)_{L^2_+}\bigr|\,dt'\\
\lesssim & \sum_{|k'-k|\leq 4}\int_0^t\hbar(t')\w{t'}^{-1}\|e^{\f\Psi2}S_{k'-1}^\h \p_y({y\vf}_\Phi)(t')\|_{L^2_{\rm v}(L^\infty_\h)}\\
&\qquad\qquad\qquad\qquad\times\|e^{\f\Psi2}\D_{k'}^\h v_\Phi(t')\|_{L^\infty_{\rm v}(L^2_\h)}\|e^\Psi\D_k^\h G_\Phi(t')\|_{L^2_+}\,dt'\\
\lesssim & \sum_{|k'-k|\leq 4}2^{\f{k'}2}\int_0^t\dot\theta(t')
\|\hbar^{\f12} e^\Psi\D_{k'}^\h G_\Phi(t')\|_{L^2_+}
\|\hbar^{\f12} e^\Psi\D_k^\h G_\Phi(t')\|_{L^2_+}\,dt'.
\end{split}
\eeno
As a result, we deduce, by a similar derivation of \eqref{S4eq4}, that
\beno
 \int_0^t\hbar(t')\w{t'}^{-1}
\bigl|\bigl(e^\Psi\D_k^\h[T^\h_{\p_y({y\vf})}v]_\Phi\ |\ e^\Psi\D_k^{\rm
h} G_\Phi\bigr)_{L^2_+}\bigr|\,dt'\lesssim d_{k'}^2
2^{-k'}\|\hbar^{\f12}e^\Psi
G_\Phi\|_{\wt{L}^2_{t,\dot{\tht}(t)}(\cB^{1,0})}^2.
\eeno

Finally again thanks to \eqref{S7eq21} and \eqref{S7eq11} , we get, by applying Lemma \ref{lem:Bern}, that
\beno
\begin{split}
 \int_0^t&\hbar(t')\w{t'}^{-1}
\bigl|\bigl(e^\Psi\D_k^\h[R^\h(v,\p_y({y\vf}))]_\Phi\ |\ e^\Psi\D_k^{\rm
h} G_\Phi\bigr)_{L^2_+}\bigr|\,dt'\\
\lesssim & 2^{\f{k'}2}\sum_{k'\geq k-3}\int_0^t\hbar(t')\w{t'}^{-1}\|e^{\f\Psi2}{\D}_{k'}^\h v_\Phi(t')\|_{L^\infty_{\rm v}(L^2_\h)}\\
&\qquad\qquad\qquad\qquad\times\|e^{\f\Psi2}\wt{\D}_{k'}^\h \p_y({y\vf}_\Phi)(t')\|_{L^2_+}
\|e^\Psi\D_k^\h G_\Phi(t')\|_{L^2_+}\,dt'\\
\lesssim &  2^{\f{k'}2}\sum_{k'\geq k-3}2^{\f{k'}2}\int_0^t\hbar(t')\dot\theta(t')
\|e^{\Psi} \D_{k'}^\h G_\Phi(t')\|_{L^2}
\|e^\Psi\D_k^\h G_\Phi(t')\|_{L^2_+}\,dt'.
\end{split}
\eeno
Then it follows from a similar derivation of \eqref{S4eq6} that
\beno
 \int_0^t\hbar(t')\w{t'}^{-1}
\bigl|\bigl(e^\Psi\D_k^\h[R^\h(v,\p_y({y\vf}))]_\Phi\ |\ e^\Psi\D_k^{\rm
h} G_\Phi\bigr)_{L^2_+}\bigr|\,dt'\lesssim d_{k'}^2
2^{-k'}\|\hbar^{\f12}e^\Psi
G_\Phi\|_{\wt{L}^2_{t,\dot{\tht}(t)}(\cB^{1,0})}^2.
\eeno

By summarizing the above estimates, we conclude the proof of \eqref{S7eq14}. This ends the proof of Lemma \ref{S7lem3}.
\end{proof}

\begin{proof}[Proof of Lemma \ref{S7lem2}] We first observe from $\p_xu+\p_yv=0$ that $v=\int_y^\infty \p_xu\,dy',$ so that one has
\beno
|\D_k^\h v_\Phi(t)|\leq e^{-\f58\Psi}\int_y^\infty e^{-\f18\Psi} \times e^{\f34\Psi}|\D_k^\h\p_xu_\Phi(t)|\,dy',
\eeno
from which, \eqref{S2eq21} and Lemma \ref{lem:Bern}, we infer
\beq \label{AAAP}
\begin{split}
\|e^{\f\Psi2}\D_k^\h v_\Phi(t)\|_{L^2}\leq &\|e^{-\f{\Psi}8}\|_{L^2_{\rm v}}^2\|e^{\f34\Psi}\D_k^\h\p_xu_\Phi(t)\|_{L^2_+}\\
\lesssim &2^k\w{t}^{\f12}\|e^{\f34\Psi}\D_k^\h u_\Phi(t)\|_{L^2_+}\\
\lesssim &2^k\w{t}^{\f12}\|e^{\Psi}\D_k^\h G_\Phi(t)|\|_{L^2_+}.
\end{split}
\eeq

 In view of \eqref{1.9}, \eqref{S2eq13a} and \eqref{AAAP}, we infer
\beq \label{S7eq13b}
\begin{split}
&\int_0^t\w{t'}^{-1}\hbar(t')\bigl|\bigl(e^\Psi y\int_y^\infty \left(\p_yu^s+\e f(t')\chi'(y')\right)\D_k^{\rm h}v_\Phi
\,dy' \ |\ e^\Psi\D_k^{\rm
h} G_\Phi\bigr)_{L^2_+}\bigr|\,dt'\\
&\lesssim \int_0^t\hbar(t')\w{t'}^{-1}\|e^{-\f\Psi4}y\|_{L^2_{\rm v}}\|e^{\f34\Psi}\left(\p_yu^s+\e f(t')\chi'(y')\right)\|_{L^2_{\rm v}}
\|e^{\f\Psi2}\D_k^{\rm h}v_\Phi\|_{L^2_+}\|\D_k^{\rm h}G_\Phi\|_{L^2_+}\,dt'\\
&\lesssim \int_{0}^t \dot{\theta}(t')
\|\hbar^{\f12}e^\Psi\D_k^{\rm
h}G_\Phi\|_{L^2_+}^2\,dt'\\
&\lesssim  d_k^22^{-{k}}\|\hbar^{\f12}e^\Psi
G_\Phi\|_{\wt{L}^2_{t,\dot{\tht}(t)}(\cB^{1,0})}^2,
\end{split}
\eeq
where in the last step, we used Definition \ref{def1.1}.

On the other hand,
due to \eqref{1.8bb},  \eqref{S2eq20},
\eqref{AAAP} and the support properties to the Fourier transform of
the terms in $T^{\rm h}_{\p_yu}v,$ we find
\beno
\begin{split}
\int_{0}^t&\hbar(t')\w{t'}^{-1}\bigl|\bigl(e^\Psi y\int_y^\infty \D_k^{\rm h}\bigl[T^{\rm
h}_{ \p_yu}v\bigr]_\Phi\,dy'\ |\ e^\Psi\D_k^{\rm
h}G_\Phi\bigr)_{L^2_+}\bigr|\,dt'\\
\lesssim & \sum_{|k'-k|\leq
4}\int_{0}^t\w{t'}^{-1}\|e^{-\f\Psi4}y\|_{L^2_{\rm v}}\| e^{\f34\Psi} S_{k'-1}^{\rm
h}\p_yu_\Phi(t')\|_{L^2_{\rm v}(L^\infty_\h)}\\
&\qquad\qquad\qquad\qquad\times\|\hbar^{\f12} e^{\f\Psi2}\D_{k'}^{\rm h}v_\Phi(t')\|_{L^2_+}\|\hbar^{\f12} e^{\Psi}\D_{k}^{\h}G_\Phi(t')\|_{L^2_+}\,dt'\\
\lesssim & \sum_{|k'-k|\leq
4}2^{k'}\int_{0}^t\dot{\theta}(t')\|\hbar^{\f12} e^{\Psi}\D_{k'}^{\rm h}
G_\Phi(t')\|_{L^2_+}\|\hbar^{\f12} e^{\Psi}\D_{k}^{\rm
h}G_\Phi(t')\|_{L^2_+}\,dt'.
\end{split}
\eeno
Then a similar derivation of \eqref{S4eq4} yields
\beno
\begin{split}
\int_{0}^t\hbar(t')\w{t'}^{-1}\bigl|\bigl(e^\Psi y\int_y^\infty \D_k^{\rm h}\bigl[T^{\rm
h}_{ \p_yu}v\bigr]_\Phi\,dy'\ |\ e^\Psi\D_k^{\rm
h}G_\Phi\bigr)_{L^2_+}\bigr|\,dt'\lesssim & d_k^22^{-{k}}\|\hbar^{\f12}e^\Psi
G_\Phi\|_{\wt{L}^2_{t,\dot{\tht}(t)}(\cB^{1,0})}^2.
\end{split}
\eeno

Again thanks to \eqref{S2eq20}, we get, by a similar procedure, that
\beno
\begin{split}
\int_{0}^t&\hbar(t')\w{t'}^{-1}\bigl|\bigl(e^\Psi y\int_y^\infty \D_k^{\rm h}\bigl[T^{\rm
h}_{ v}\p_yu\bigr]_\Phi\,dy'\ |\ e^\Psi\D_k^{\rm
h}G_\Phi\bigr)_{L^2_+}\bigr|\,dt'\\
\lesssim & \sum_{|k'-k|\leq
4}\int_{0}^t\w{t'}^{-1}\|e^{-\f\Psi4}y\|_{L^2_{\rm v}}\|\hbar^{\f12} e^{\f\Psi2}S_{k'-1}^{\rm h}v_\Phi(t')\|_{L^2_{\rm v}(L^\infty_\h)}\\
&\qquad\qquad\qquad\qquad\times\| e^{\f34\Psi}\D_{k'}^\h \p_yu_\Phi(t')\|_{L^2_+}\|\hbar^{\f12} e^{\Psi}\D_{k}^{\rm
h}G_\Phi(t')\|_{L^2_+}\,dt'\\
\lesssim & \sum_{|k'-k|\leq
4}2^{-\f{k'}2}\int_{0}^t\w{t'}^{-\f12}\dot{\theta}(t')\|\hbar^{\f12} e^{\f\Psi2}S_{k'-1}^{\rm h}v_\Phi(t')\|_{L^2_{\rm v}(L^\infty_\h)}\|\hbar^{\f12} e^{\Psi}\D_{k}^{\rm
h}G_\Phi(t')\|_{L^2_+}\,dt'.
\end{split}
\eeno

Yet in view of  \eqref{AAAP}, we get, by a similar derivation of
 \eqref{S4eq8}, that
 \beno
\Bigl( \int_{0}^t \w{t'}^{-1}\dot{\theta}(t')\|\hbar^{\f12} e^{\f\Psi2}S_{k'-1}^{\rm h}v_\Phi(t')\|_{L^2_{\rm v}(L^\infty_\h)}^2\,dt'\Bigr)^{\f12}
\lesssim  d_{k'}2^{\f{k'}2}\|\hbar^{\f12}e^\Psi
G_\Phi\|_{\wt{L}^2_{t,\dot{\tht}(t)}(\cB^{1,0})}.
\eeno
As a result, it comes out
\beno
\int_{0}^t\hbar(t')\w{t'}^{-1}\bigl|\bigl(e^\Psi y\int_y^\infty \D_k^{\rm h}\bigl[T^{\rm
h}_{ v}\p_yu\bigr]_\Phi\,dy'\ |\ e^\Psi\D_k^{\rm
h}G_\Phi\bigr)_{L^2_+}\bigr|\,dt'
\lesssim  d_k^22^{-{k}}\|\hbar^{\f12}e^\Psi
G_\Phi\|_{\wt{L}^2_{t,\dot{\tht}(t)}(\cB^{1,0})}^2. \eeno

Finally  again due to \eqref{S2eq20}, \eqref{AAAP} and the support properties to the
Fourier transform of the terms in $R^{\rm h}(\p_yu,\p_x\vf),$ we get, by applying Lemma \ref{lem:Bern}, that
\beno
\begin{split}
&\int_{0}^t\hbar(t')\w{t'}^{-1}\bigl|\bigl(e^\Psi y\int_y^\infty \D_k^{\rm h}\bigl[R^{\rm h}(\p_yu,\p_xG
)\bigr]_\Phi\ |\ e^\Psi\D_k^{\rm
h}G_\Phi\bigr)_{L^2_+}\bigr|\,dt'\\
&\lesssim 2^{{\f{k}2}}\sum_{k'\geq
k-3}\int_{0}^t\w{t'}^{-1}\|e^{-\f\Psi4}y\|_{L^2_{\rm v}}\|e^{\f34\Psi}\wt{\D}_{k'}^{\rm
h}\p_yu_\Phi(t')\|_{L^2_+}\\
&\qquad\qquad\qquad\times\|\hbar^{\f12}e^{\f\Psi2}{\D}^{\rm h}_{k'}v_\Phi(t')\|_{L^2_+}\|\hbar^{\f12}e^{\Psi}\D_{k}^{\rm
h}G_\Phi(t')\|_{L^2_+}\,dt'\\
&\lesssim 2^{{\f{k}2}}\sum_{k'\geq
k-3}2^{\frac{k'}2}\int_{0}^t\dot{\theta}(t')\|\hbar^{\f12} e^{\Psi}\D_{k'}^{\rm
h}G_\Phi(t')\|_{L^2_+}\|\hbar^{\f12}e^{\Psi}\D_{k}^{\rm
h}G_\Phi(t')\|_{L^2_+}\,dt',
 \end{split} \eeno
 from which and a similar derivation of \eqref{S4eq6}, we obtain
 \beno
\begin{split}
\int_{0}^t\hbar(t')\w{t'}^{-1}\bigl|\bigl(e^\Psi y\int_y^\infty \D_k^{\rm h}\bigl[R^{\rm h}(\p_yu,\p_xG
)\bigr]_\Phi\ |\ e^\Psi\D_k^{\rm
h}G_\Phi\bigr)_{L^2_+}\bigr|\,dt'\lesssim  d_k^22^{-{k}}\|\hbar^{\f12}e^\Psi
G_\Phi\|_{\wt{L}^2_{t,\dot{\tht}(t)}(\cB^{1,0})}^2.
 \end{split} \eeno

Therefore, by virtue of \eqref{pd1},  we conclude that
\beno
\int_0^t\hbar(t')\w{t'}^{-1}\bigl|\bigl(e^\Psi y\int_y^\infty \D_k^{\rm h}[\p_yuv]_\Phi
\,dy' \ |\ e^\Psi\D_k^{\rm
h} G_\Phi\bigr)_{L^2_+}\bigr|\,dt'\lesssim  d_k^22^{-{k}}\|\hbar^{\f12}e^\Psi
G_\Phi\|_{\wt{L}^2_{t,\dot{\tht}(t)}(\cB^{1,0})}^2. \eeno
This together with \eqref{S7eq13b} ensures \eqref{S7eq13a}. We thus finishes the proof of Lemma \ref{S7lem2}.
\end{proof}

\section*{Acknowledgments}

Both authors are supported by K.C.Wong Education Foundation.  M. Paicu was  partially supported by the Agence Nationale de la Recherche, Project IFSMACS, grant ANR-15-CE40-0010. P. Zhang is partially supported
by NSF of China under Grants   11371347 and 11688101,  and innovation grant from National Center for
Mathematics and Interdisciplinary Sciences.

\bigskip

\end{document}